\documentclass[11pt,english]{amsart}

\usepackage[margin=2cm]{geometry}

\usepackage{color,xcolor}

\usepackage[pdfstartpage=1,colorlinks=true,bookmarks=false,pdfstartview={FitH}]{hyperref}
\usepackage{soul}
\usepackage{cite}
\usepackage{amsmath,amssymb}
\usepackage{mathtools}
\usepackage{verbatim}
\usepackage{comment}
\usepackage{esint}
\usepackage{cleveref}
\usepackage{enumerate}

\usepackage{mathrsfs}
\newcommand{\sE}{\mathscr{E}}
\newcommand{\sG}{\mathscr{G}}
\newcommand{\sL}{\mathscr{L}}
\newcommand{\sN}{\mathscr{N}}
\newcommand{\sX}{\mathscr{X}}
\newcommand{\sY}{\mathscr{Y}}

\usepackage[english]{babel}
\usepackage{makecell}

\usepackage{datetime}

\DeclarePairedDelimiter\abs{\lvert}{\rvert}
\makeatletter
\let\oldabs\abs
\def\abs{\@ifstar{\oldabs}{\oldabs*}}
\makeatletter
\newcommand{\vast}{\bBigg@{4}}
\newcommand{\Vast}{\bBigg@{5}}
\makeatother

\newcommand{\eps}{\varepsilon}
\newcommand{\Beta}{\mathrm{B}}

\newcommand{\R}{\mathbb{R}}
\newcommand{\N}{\mathbb{N}}
\newcommand{\bS}{\mathbb{S}}
\newcommand{\p}{\partial}

\newcommand{\Ds}{(-\Delta)^{s}}
\newcommand{\Ints}{(-\Delta)^{-s}}

\newcommand{\PV}{\textnormal{P.V.}\,}
\newcommand{\loc}{\textnormal{loc}}
\newcommand{\norm}[2][]{\left\|{#2}\right\|_{#1}}
\newcommand{\seminorm}[2][]{\left[{#2}\right]_{#1}}
\newcommand{\sign}{\textnormal{sign}\,}
\newcommand{\set}[1]{\left\{#1\right\}}

\newcommand{\textif}{\text{ if }}
\newcommand{\textas}{\text{ as }}
\newcommand{\texton}{\text{ on }}

\newcommand{\textae}{\text{ a.e. }}

\newcommand{\textfor}{\text{ for }}
\newcommand{\textand}{\text{ and }}

\newcommand{\oneset}[1]{\mathbf{1}_{\set{#1}}}

\newcommand{\dist}{{\rm dist}\, }

\newcommand{\bG}{\mathbb{G}}

\newcommand{\bP}{\mathbb{P}}
\newcommand{\bT}{\mathbb{T}}
\newcommand{\bV}{\mathbb{V}}

\newcommand{\cE}{\mathcal{E}}

\newcommand{\cN}{\mathcal{N}}
\newcommand{\cG}{\mathcal{G}}
\newcommand{\cH}{\mathcal{H}}

\newcommand{\cK}{\mathcal{K}}
\newcommand{\cL}{\mathcal{L}}
\newcommand{\cP}{\mathcal{P}}
\newcommand{\cR}{\mathcal{R}}

\newcommand{\cT}{\mathcal{T}}
\newcommand{\cV}{\mathcal{V}}

\newcommand{\angles}[1]{\left\langle{#1}\right\rangle}

\newcommand{\Dh}{(-\Delta)^{\frac12}}

\newcommand{\Dhn}{(-\Delta_{\R^n})^{\frac12}}
\newcommand{\DhN}{(-\Delta_{\R^N})^{\frac12}}

\newcommand{\Dsn}{(-\Delta_{\R^n})^{s}}
\newcommand{\DsN}{(-\Delta_{\R^N})^{s}}
\newcommand{\Intsn}{(-\Delta_{\R^n})^{-s}}
\newcommand{\IntsN}{(-\Delta_{\R^N})^{-s}}
\newcommand{\Inthn}{(-\Delta_{\R^n})^{-\frac12}}
\newcommand{\InthN}{(-\Delta_{\R^N})^{-\frac12}}
\DeclareMathOperator{\diam}{diam}

\DeclareMathOperator{\Hyperg}{\mbox{ }_2 F_{1}}

\theoremstyle{plain}
\newtheorem{thm}{Theorem}[section]
\newtheorem{lem}[thm]{Lemma}
\newtheorem{cor}[thm]{Corollary}
\newtheorem{prop}[thm]{Proposition}

\newtheorem{conj}[thm]{Conjecture}

\newtheorem*{prop*}{Proposition}
\newtheorem*{thm*}{Theorem}

\theoremstyle{definition}
\newtheorem{defn}[thm]{Definition}

\theoremstyle{remark}
\newtheorem{remark}{Remark}[section]
\newcommand{\bremark}{\begin{remark} \em}
\newcommand{\eremark}{\end{remark} }
\newcommand{\COMMENT}[1]{}

\numberwithin{equation}{section}

\definecolor{g1}{rgb}{0,0.5,0.1}
\definecolor{g2}{rgb}{0,0.6,0}
\definecolor{r2}{rgb}{0.8,0,0}

\vbadness=\maxdimen
\hbadness=\maxdimen

\begin{document}

\title[Fractional Lane--Emden--Serrin equation]
    {
From fractional Lane--Emden--Serrin equation---existence, multiplicity and local behaviors via classical ODE---to fractional Yamabe metrics with singularity of ``maximal'' dimension
    }

\author{Hardy Chan}
\email[H.~Chan]{hardy.chan@unibas.ch}
\address[H.~Chan]{Department of Mathematics, ETH Z\"{u}rich and Instituto de Ciencias Matem\'aticas and Department of Mathematics, University of Basel}

\author{Azahara DelaTorre}
\email[A. DelaTorre]{azahara.delatorrepedraza@uniroma1.it}
\address[A. DelaTorre]{Dipartimento di Matematica Guido Castelnuovo, Sapienza Università di Roma}

\begin{abstract}

Point singularities of solutions to the classical Lane--Emden--Serrin equation have a polyhomogeneous asymptotic expansion whose logarithmic corrections are determined by a first order ODE. Surprisingly, we are able to discover such an ODE for the fractional Lane--Emden--Serrin equation, and therefore give a short classification for the precise local behavior of its solutions up to the second order involving a double logarithm. This seems to be the first time that a nonlocal equation is associated to a genuinely local ODE in one dimension. New non-existence, existence and multiplicity results for the corresponding Dirichlet problem are also discussed.

Moreover, we construct complete $s$-fractional Yamabe metrics in $\mathbb{R}^n$ which are singular along a smooth submanifold of dimension $(n-2s)/2$, via direct integral asymptotic analysis with global geometric weights. This covers the missing borderline case as suggested by the deep work of Schoen and Yau. While such dimension is maximal in the class of distributional solutions, we conjecture the existence of complete metrics, understood in a suitably generalized sense, with prescribed singularities of strictly higher dimensions.

\end{abstract}

\maketitle

\section{Introduction}

\subsection{Overview}

This work is devoted to studying singular positive solutions to the fractional Lane--Emden equation with Serrin-critical exponent (or Lane--Emden--Serrin equation for short)
\begin{equation}\label{eq:intro-1.1}
\DsN u=u^{p}
\qquad \text{ in } B_1\setminus\set{0} \subset \R^N,
\quad \text{ for } s\in(0,1),\,p=\tfrac{N}{N-2s},
\end{equation}
and to the so-called singular fractional Yamabe problem in conformal geometry,
\begin{equation}\label{eq:intro-1.2}
\Dsn v=v^{\frac{n+2s}{n-2s}}
\qquad \text{ in } \R^n\setminus \Sigma^k,
\end{equation}
where $\Sigma$ is a smooth submanifold of dimension $k=\frac{n-2s}{2}$.

Here \eqref{eq:intro-1.1} dictates the (singular) behavior of solutions to \eqref{eq:intro-1.2}, as seen in the model case $\Sigma=\R^k$ where $\R^n\setminus \Sigma=(\R^{n-k}\times \setminus\set{0})\times \R^k \cong \R^N\setminus\set{0}$ if we denote the co-dimension by $N=n-k$ and consider solutions independent of the (tangential) variables in $\Sigma=\R^k$, leading to \eqref{eq:intro-1.1} with  $p=\frac{N+k+2s}{N+k-2s}$, a Sobolev-subcritical exponent in dimension $N$.

Analytically, the criticality of the exponent $p=\frac{N}{N-2s}$ in \eqref{eq:intro-1.1} was first observed in 1965 for $s=1$ by J. Serrin, who classified isolated singularities of quasilinear equations \cite{Serrin1965}. In the setting of Lane--Emden equation, his result corresponds to the fact that non-removable singularities behave like the fundamental solution for $p\in(1,\frac{N}{N-2})$, as proved by P.-L. Lions \cite{Lions1980}. 
Geometrically, the deep result of Schoen and Yau \cite{SY} bounds, when $s=1$, the dimension of the singular set $\Sigma^k$ in \eqref{eq:intro-1.2} from above by $\frac{n-2s}{2}$, as long as $v$ is the conformal factor of a \emph{complete} Yamabe metric. At the ``maximal'' dimension $k=\frac{n-2s}{2}$, the Sobolev-critical exponent $\frac{n+2s}{n-2s}=\frac{N}{N-2s}$ is precisely Serrin-critical in the co-dimension $N=\frac{n+2s}{2}$. ``Maximality'' depends on the notion of solutions and it will be clarified in \Cref{sec:state-art-construction}.

The main contributions of our work, which  concern \eqref{eq:intro-1.1}--\eqref{eq:intro-1.2} for $s\in(0,1)$, are the following:
\begin{itemize}
\item The surprising equivalence of the fractional equation \eqref{eq:intro-1.1} to a local ODE \eqref{eq:intro-asymp-ODE} in the asymptotic regime, leading to a short and completely local proof for the classification of the singular behavior (\Cref{thm:behavior-rad}). To the best of our knowledge, this is the first genuinely local argument for an equation of fractional order without extending to a higher dimension.
\item Liouville theorems under minimal assumptions for solutions to \eqref{eq:intro-1.1} but in an exterior domain (\Cref{thm:Liouville-int}, \Cref{thm:Liouville-diff}).
\item Existence of solutions to \eqref{eq:intro-1.1} via integral asymptotic analysis of the nonlocal ODE (\Cref{thm:rad-s}).
\item Construction of singular solutions to \eqref{eq:intro-1.2} (\Cref{th:Yamabe}),  for $s=\frac12$ due to parity, by a simplified gluing method.

\end{itemize}

Regarding the singular Yamabe problem, we hope that our paper also serves as a bridge between the classical studies and a new phenomenon peculiar to nonlocality (see \Cref{table} on page \pageref{table}). On the one hand, with \Cref{th:Yamabe}, we complete the analogy ($k\leq\frac{n-2s}{2}$) with the classical case. On the other hand, we expect that \Cref{conj:main} on the existence of complete Yamabe metrics with \emph{very high} dimensional singularities (for some precise values of $k>\frac{n-2s}{2}$) opens up a new research direction.

\subsection{Organization}
The literature concerning qualitative properties of solutions to \eqref{eq:intro-1.1} as well as the geometric background and motivations for \eqref{eq:intro-1.2} are introduced 
in \Cref{sec:EF}--\Cref{sec:geometric} respectively, where we also point out our contributions. The main results are presented  in \Cref{sec:main-results}.

In \Cref{sec:behavior-rad} we prove the classification of local behavior of \eqref{eq:intro-1.1} using the asymptotic ODE. In \Cref{sec:Liouville} we show Liouville-type results. In \Cref{sec:rad} we collect computations involving an adapted conformal fractional Laplacian on radial functions. These are used as fine barriers in the construction of the singular profile in \Cref{sec:radial}. Next, singular $\frac12$-Yamabe metrics with a prescribed singular set of dimension $\frac{n-1}{2}$ are built in \Cref{sec:Yamabe}. Finally, in the appendices, technical calculations are recorded, and well-developed arguments are refined and tailored for our equations. 

\normalcolor

\subsection{Notations}
The functional setting and the notions of solution are introduced in \Cref{app:functional}.

\begin{itemize}
	\item Constants are \emph{universal} when they depend only on $N,k,n$, $s$, $\Omega$ and $\Sigma$. Generic universal constants are denoted by $C$ (big constants) and $c$ (small constants). 
	\item $B_r^n(x):= \{ y\in\R^n: \ \|y-x\| <r \}$. The dimension will be omitted when understood.
    \item $\angles{x}=\sqrt{1+|x|^2}$ is the Japanese bracket.
	\item 	$a_+:=\max\{a,0\}$ is the positive part of $a$.
	\item $f\lesssim g$ means 
	$f\leq C g$, $f\lesssim_{\beta} g$ means $f\leq C(\beta)g$ and 
    $f\asymp g$ means both $f\lesssim g$ and $g\lesssim f$.
\item $f\sim g$ means that $f=(1+o(1))g$, i.e. $f/g \to 1$ in the specified limit.
\item Singular integrals are understood in the principal value sense if necessary (even if not emphasized).	
	
\end{itemize}

\subsection{Remark on the title}
In our previous work \cite{Chan_DelaTorre1}, we announced that \Cref{th:Yamabe}  
would appear in a paper entitled ``Singular solutions for a critical fractional Yamabe problem.''

\subsection{Added comment}
A few days after the first draft of the present work appeared online, H. Chen announced a preprint  \cite{Chen2021} where he provides yet another (other than \cite{WW}) alternate and  independent classification of local behavior, %
determining the first order up to a scalar multiple, by means of fine comparisons together with direct while technical computations on $\R^N$ in Cartesian coordinates.

\section{The Lane--Emden--Serrin equation}
\label{sec:EF}

\subsection{Local singular behavior for classical Lane--Emden equations
}
\label{subsec:LEeq}
The Lane--Emden equation
\[
{-\Delta}u=u^p
    \qquad \text{ in } B_1\subset\R^N
\]
has been studied for a long time \cite{Chandrasekhar} as a model for stellar structure in astrophysics. %
Central to the mathematical understanding of solutions is the possible formation of isolated singularities. By the translation invariance of the equation, we place the singularity at the origin. For distributional solutions, they start to exist at the Serrin-critical exponent $p=\frac{N}{N-2}$ and are well-understood up to and including the Sobolev-critical exponent $p=\frac{N+2}{N-2}$ due to the combined work of Lions, Gidas, Spruck, Caffarelli and Aviles \cite{Lions1980, GS, CGS, Aviles-2} in the 1980's, namely
\[
u(x) \asymp
\begin{cases}
|x|^{-\frac{2}{p-1}}
    & \textfor p\in(\frac{N}{N-2},\frac{N+2}{N-2}],\\
|x|^{-(N-2)}(\log\frac{1}{|x|})^{-\frac{N-2}{2}}
    & \textfor p=\frac{N}{N-2},
\end{cases}
\]
as $x\to 0$. 
(A more precise expansion can be made. See also \cite{BV} where a Hardy potential is present, and \cite{BPV,DMP2007} where isolated boundary singularities are investigated.) In order to motivate the forthcoming discussions, we observe that the Emden--Fowler transformation (restricted to the radial case for simplicity)
\[
u(r)=r^{-\frac{2}{p-1}}v(t),
    \qquad
t=-\log r\to+\infty,
\]
turns the Lane--Emden equation into an autonomous ODE
\[
-v_{tt}
-a_p v_t
+b_pv
=v^p,
\]
with constant coefficients
\[
a_p=-\left(
    N-2-\frac{4}{p-1}
\right)
\geq 0
    \quad \textfor \quad
p \leq \frac{N+2}{N-2},
\]
\[
b_p=\frac{2}{p-1}
\left(
    N-2-\frac{2}{p-1}
\right)
\geq 0
    \quad \textfor \quad
p \geq \frac{N}{N-2},
\]
so that detailed structures of the solution can be revealed via a phase-plane analysis. The criticality of the Serrin exponent is seen from the fact 
$b_{\frac{N}{N-2}}=0$. Then, asymptotically $v$ is no longer the constant $b_p^{\frac{1}{p-1}}$, but determined by the ODE

\begin{equation}\label{eq:intro-ODE}
{-}a_{\frac{N}{N-2}}
v_t=v^{\frac{N}{N-2}}.
\end{equation}
For more details, we refer to the concise book \cite{QSbook}.

\subsection{Local singular behavior for fractional Lane--Emden equations }

We now focus on the nonlocal counterpart
\begin{equation}\label{eq:intro-1.4}
(-\Delta_{\R^N})^s u=u^{p}\quad \text{ in } B_1\setminus\set{0}.
\end{equation}
Here $s\in(0,1)$ and the fractional Laplacian is the %
integro-differential operator defined by the principal value integral

$$(-\Delta_{\R^N})^s u(x)=C_{N,s} \PV {\int_{\R^N}}\frac{u(x)-u(y)}{|x-y|^{N+2s}}\,dy,
\qquad
C_{N,s}
=\dfrac{
	2^{2s}
	\Gamma(\frac{N+2s}{2})
}{
	\Gamma(2-s)
	\pi^{\frac{N}{2}}
}s(1-s).
$$
In this setting, local singular behaviors are already known for Serrin-supercritical exponents \cite{CJSX, YZ2}, namely
\[
u(x) \asymp
|x|^{-\frac{2s}{p-1}}
\quad \textfor p\in(\tfrac{N}{N-2s},\tfrac{N+2s}{N-2s}],
\]
as $x\to 0$. Moreover, under the fractional Emden--Fowler transformation 
\[
u(r)=r^{-\frac{2s}{p-1}}v(t),
	\qquad
t=-\log r,
\]
the equation \eqref{eq:intro-1.4} for radial functions becomes 
\begin{equation}\label{eq:intro-PDE}
\PV\int_{\R}
    \tilde{K}_0(t-\bar{t})
    [v(t)-v(\bar{t})]
\,d\bar{t}
+b_{s,p} v(t)
=v(t)^{p},
\end{equation}
where the kernel $\tilde{K}_0$ has a singularity like the one-dimensional fractional Laplacian and decays exponentially at infinity. The left hand side is a conjugation of (conformal to) the fractional Laplacian in $\R^N$. Analogous to the local case $s=1$, we have that
\[
b_{s,p}\geq 0
    \quad \textfor p\geq \tfrac{N}{N-2s},
\]
and equality holds exactly when $p=\frac{N}{N-2s}$ (see \Cref{cor:K-int-0} and \Cref{lem:emden_change}). 
In this Serrin-critical case, as one of the main contributions of the present paper, we are able to show (in \Cref{sec:behavior-rad}) that the asymptotic behavior of $v$, which solves the integro-differential equation \eqref{eq:intro-PDE}, is still driven by an ODE
\begin{equation}\label{eq:intro-asymp-ODE}
{-}a_{s,\frac{N}{N-2s}}v_t=v^{\frac{N}{N-2s}},
    \quad \text{ for large $t$},
\end{equation}
which has exactly the form \eqref{eq:intro-ODE}. This is surprising and it seems to be the first instance where a nonlocal equation is associated directly to a scalar first order ODE, unlike in the Caffarelli--Silvestre extension \cite{CSext} or the infinite (coupled) system of second order ODEs introduced in \cite{fat,fat-survey}. From this formulation we deduce a short, yet complete, classification of local behavior of the fractional Lane--Emden--Serrin equation  (\Cref{thm:behavior-rad}), alongside the recent independent work of Wei and Wu \cite{WW} who adapted the classical work \cite{Aviles-1,Aviles-2} %
to the fractional case using the extension. As a new contribution, we determine the precise second order term where a $\log t$ correction is crucial, extending \cite{Veron1981} to the nonlocal  framework.

\medskip

In essence, the ODE \eqref{eq:intro-asymp-ODE} is obtained through the Emden--Fowler transform of \eqref{eq:intro-1.4} in its integral form. One crucially observes that the resulting equation \eqref{eq:v-2-rad} is asymptotically autonomous (i.e. its integral kernel is a constant up to a small error). 
Thus, we will proceed to determine the local behavior as follows:
\begin{enumerate}[(i)]
\item We bound the kernel in \eqref{eq:v-2-rad} from below and obtain a differential inequality \eqref{eq:v-3-3}. This implies the upper bound (\Cref{prop:upper-rough}).
\item We refine \eqref{eq:v-2-rad} to \eqref{eq:v-4-bar} using the upper bound and the superlinearity of the power (\Cref{lem:eq-v-4-bar}).
\item We employ a \emph{local} continuity argument (\Cref{prop:lower-rough}) to show that a singular lower bound (sufficient for absorbing the error) at any point propagates to the whole half-line. This yields the exact main order expansion \eqref{eq:v-exact}.
\item The second order is obtained by the linearized ODE around the first order term. After a delicate cancellation in the logarithmic scale (in the Emden--Fowler variable) we arrive at an exact ODE \eqref{eq:phi-ODE}, whose right-hand side is a reciprocal function.
\end{enumerate}
In the non-radial case, the argument in terms of the spherical average requires an extra use of Jensen's inequality for the first order term. Nonetheless, the spirit and procedure remain completely the same. These ideas are sketched in \Cref{sec:ODE-ideas} and are made rigorous in the rest of \Cref{sec:behavior-rad}.

\normalcolor

\normalcolor

\subsection{Non-existence of global solutions}

\normalcolor

Dancer, Du and Guo \cite{Dancer-Du-Guo2011} proved that non-negative exterior solutions to the Lane--Emden--Serrin equation must be trivial. Here the classification is possible because for the particular Serrin-critical exponent, model (power type) solutions have a logarithmic correction and there is no consistent way to assign the power of this logarithm at infinity. Thus, the equation only needs to be satisfied in an exterior domain. Indeed, the proof of \cite[Theorem 2.3]{Dancer-Du-Guo2011} is based on an ODE asymptotic analysis.

In \Cref{thm:Liouville-int} (see also \Cref{thm:Liouville-diff}), we generalize the classification in \cite{Dancer-Du-Guo2011} to the fractional setting using an integral asymptotic analysis. These Liouville theorems sharpen the ones in the literature in various ways. 
First, it is known from \cite{CLO2005,CLL2017} and \cite[Remark 1.2]{JLX} that 
\emph{entire} 
solutions to the fractional Lane-Emden equation with $0<p<\frac{N+2s}{N-2s}$ are trivial. For the particular exponent $\frac{N}{N-2s}$, we classify non-negative 
solutions to \eqref{eq:ext} that are possibly \emph{singular}. Secondly, 
it is remarked in \cite[Remark 2.2]{BQ} that solutions to \eqref{eq:ext-Kelvin} that are \emph{locally bounded} around the origin do not exist. In \Cref{thm:Liouville-diff}, no boundedness nor \emph{any} growth condition is assumed. Besides, the equation (for $u$) only needs to be satisfied in an \emph{exterior} domain.

\section{Geometric motivation}
\label{sec:geometric}

\subsection{Singular Yamabe problem}

The classical Yamabe problem in a compact Riemannian manifold $(M,g)$ asks for a conformal metric with constant scalar curvature \cite{LP}. In 1988, the profound study of Schoen and Yau \cite{SY} on the singularities of complete Yamabe metrics showed upper bounds on their Hausdorff dimension, which depend on the geometry of $M$ in general, and equal $\frac{n-2}{2}$ in $\bS^n$. For the construction of singular solutions in the latter case, we refer the readers to \Cref{table} on page \pageref{table}.
Moreover, any distributional solution to the associated PDE-problem was conjectured in \cite{SY} to provide a complete metric for the Yamabe problem. %
This has been disproved by Pacard \cite{Pacard-2} in ambient dimensions $n=4,6$ and Chen and Lin \cite{CL} for $n\geq 9$. %

\subsection{Singular fractional Yamabe problem}

The fractional Yamabe problem for $s\in(0,1)$ is then posed in parallel to the classical one:
finding a conformal metric with constant fractional curvature $Q_s^g:=P_s^g(1)$, a one-parameter family of intrinsic curvatures with good conformal properties \cite{CG,mar_survey}. Here $P_s^g$ is the conformal fractional Laplacian which is defined using the relation between scattering operators of asymptotically hyperbolic metrics \cite{GZ} (see also \cite{FG,MM}) and Dirichlet-to-Neumann operators of degenerate uniformly elliptic boundary value problems. In particular, $P_s^g$ satisfies the conformal property
\begin{equation}\label{eq:conf}
P^{g_v}_s (f)=v^{-\frac{n+2s}{n-2s}}P_s^g(v f),
	\qquad
g_v:=v^{\frac{4}{n-2s}}g,\,
v>0,
	\qquad
f\in C^\infty(M).
\end{equation}
Solvability results on general manifolds are shown in \cite{GQ,GW,KMW18,MN2}.

Now, we restrict our attention to the flat ambient manifold $(\R^n,|dz|^2)$ %
(which is conformal to round sphere via stereographic projection) where
$P_s^{|dz|^2}=\Dsn$. The conformal property \eqref{eq:conf} with $f=1$
yields the fractional Yamabe equation
\begin{equation*}
\Dsn v=Q_s^{g_v} v^{\frac{n+2s}{n-2s}}
	\quad \text{ in } \R^n.
\end{equation*}
By rescaling we may assume $Q_s^{g_v}=1$. As in the local case, it is known \cite{CLO2006,JLX} that the only regular solutions %
are the %
(fractional) bubbles  %
which represent the spherical metrics and are extremals of the fractional Sobolev inequality. 

In the singular fractional Yamabe problem, one allows the Yamabe metric to be singular, namely $v\to+\infty$ on approaching a singular submanifold $\Sigma$.
In \Cref{th:Yamabe}, we construct such $v$ when $s=1/2$ and $\dim\Sigma=(n-2s)/2$, a ``critical'' dimension to be clarified below.

\normalcolor

\subsubsection{Arbitrarily singular incomplete metrics}

The nonlocal analogue of the conjecture of Schoen and Yau fails as in the local case \cite{Pacard-2,CL}. Indeed, incomplete metrics blowing up on the whole $\R^n$ are constructed based on new fast-decay solutions (in the stability regime) and variational methods, as shown in \cite[Theorem 1.4]{ACGW} for suitable parameters $(n,s)$ including $n\geq 9$, $s\sim 1^-$. These building blocks opened up a way to the resolution of the singular fractional Yamabe problem when the singular set $\Sigma^k$ is a smooth submanifold \cite{fat}. %

\subsubsection{Dimensional restriction for complete metrics}

In the geometric problem, one asks that the solution to the Yamabe PDE still yields a complete metric, in the presence of a singularity of positive dimension $k\leq n$.
Gonz\'alez, Mazzeo and Sire \cite{GMS} found that %
$k$ necessarily verifies %
\begin{equation}\label{eq:Gamma-quot}
\Gamma\left(\tfrac{n-2k+2s}{4}\right)
\Big/
\Gamma\left(\tfrac{n-2k-2s}{4}\right)
\geq 0,
\end{equation}
which is satisfied, in particular, when $k\leq \frac{n-2s}{2}$. In fact, \eqref{eq:Gamma-quot} is equivalent to the condition
\begin{equation}\label{eq:Gamma-quot-equiv}
k\in\left(
        -\infty,\frac{n-2s}{2}
    \right]
\cup \bigcup_{j=0}^{\infty}
    \left(
        \frac{n+2s}{2}+4j,
        \frac{n-2s}{2}+2+4j
    \right)
\cup \bigcup_{j=0}^{\infty}
    \left(
        \frac{n+2s}{2}+2+4j,
        \frac{n-2s}{2}+4+4j
    \right].
\end{equation}
Note that each finite interval in the above union has length $2-2s$ and so they degenerate as $s\to 1^-$. In this case, \eqref{eq:Gamma-quot} reduces to the simple inequality $k\leq \frac{n-2}{2}$.

\begin{remark}
The condition \eqref{eq:Gamma-quot} was originally obtained by checking the strict positivity of the fractional curvature $Q_s$ when the conformal factor is a pure power of the distance $r$ to the model singular set (i.e. $\R^k$), namely $r^{-\frac{n-2s}{2}}$ in the notation of this paper. Thus, the condition \eqref{eq:Gamma-quot} is stated in \cite{GMS} as a strict inequality. However, since polyhomogeneous functions of $r$ are allowed, the positivity of $Q_s$ can still be achieved for the critical dimension with the help of a logarithmic factor (see \cite{Pacard-1, Chan_DelaTorre1} and \Cref{th:Yamabe}). For this reason, we have found it appropriate to reformulate the condition \eqref{eq:Gamma-quot} allowing equality.
\end{remark}

\subsubsection{State-of-the-art of construction of complete singular metrics}
\label{sec:state-art-construction}
\normalcolor

We will distinguish between four cases depending on the dimension $k$ of the singular set $\Sigma^k$. We summarize the known constructions in the following table and give detailed explanations below. We emphasize that the last case $k>\frac{n-2s}{2}$ represents a completely new phenomenon which is absent in the classical singular Yamabe problem.

\renewcommand{\arraystretch}{1.3}

\begin{center}
\begin{table}[!h]
\begin{tabular}{|c|c|c|c|c|}
\hline
$\dim(\Sigma)$ & $k=0$ & $k\in(0,\frac{n-2s}{2})$ & $k=\frac{n-2s}{2}$ & $k>\frac{n-2s}{2}$ \\
\hline
\hline
$s=1$ & \cite{Schoen1989-var,Schoen_iso,MP2} & \cite{MP} & \cite{Pacard-1,Chan_DelaTorre1} & Non-existence \cite{SY} \\
\hline
$s\in(0,1)$ & \cite{DPGW,ADGW,DG}
& \cite{fat} & \Cref{th:Yamabe} & \makecell{\Cref{conj:main} \\ on existence} \\
\hline
\end{tabular}
\medskip
\caption{Summary of constructions of complete singular Yamabe metrics}
\label{table}
\end{table}
\end{center}

The methods developed in \cite{fat,fat-survey} %
could not cover the singular submanifold of critical dimension $k=\frac{n-2s}{2}$ due to the limitations of the techniques used there, where homogeneity was crucial. Instead, we follow our construction in \cite{Chan_DelaTorre1}, an alternative to \cite{Pacard-1}, to prove \Cref{th:Yamabe}.

\bigskip
Whether the singularity dimension $\frac{n-2s}{2}$ is maximal depends on the precise notion of solution to the Yamabe equation. Standard bootstrap regularity shows the local boundedness of \emph{distributional} solutions to \eqref{eq:intro-1.1} with $p\in(1,\frac{N}{N-2s})$, i.e. $k>\frac{n-2s}{2}$.

Drastically different from the classical case, the Gamma quotient condition \eqref{eq:Gamma-quot}, as introduced in \cite{GMS}, can be satisfied for $k>\frac{n-2s}{2}$. It seems to be a highly non-trivial task to build (non-distributional) solutions singular on manifolds of such high dimensions. Nonetheless, from
\eqref{eq:Gamma-quot-equiv}, we conjecture that they exist for the following parameters.
\begin{conj}\label{conj:main}
Suppose $n\geq 3$, $s\in(0,1)$, $k\in (\frac{n+2s}{2},n)$ satisfy either
\begin{enumerate}
\item $\frac{n+2s}{2}+4j<k<\frac{n-2s}{2}+2+4j$, or
\item $\frac{n+2s}{2}+2+4j<k\leq \frac{n-2s}{2}+4+4j$,
\end{enumerate}
for some $j=0,1,\dots$
Then for any smooth compact $k$-dimensional submanifold $\Sigma^k$ without boundary in $\R^n$, there exists a complete Yamabe metric, understood in a generalized sense, that is singular on $\Sigma^k$. \end{conj}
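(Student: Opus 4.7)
The plan is to extend the cylindrical conformal framework of \Cref{subsec:fracLEeq} to the regime $k>\frac{n-2s}{2}$, where the conformal factor ceases to be locally integrable across $\Sigma^k$ once $k>\frac{n+2s}{2}$ and the equation must be understood in a Hadamard finite-part or a polyhomogeneous sense. The Gamma quotient condition \eqref{eq:Gamma-quot} is precisely what is needed for the formal singular expansion to close, which is why it governs the range of admissible $k$ in the conjecture.

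First I would build a flat radial model profile on $\R^n\setminus\R^k$. Writing $y=(y',y'')\in \R^N\times\R^k$ with $N=n-k$ and $r=|y'|$, I would look for $v=v(r)$ and perform the Emden--Fowler transformation $v(r)=r^{-\frac{n-2s}{2}}w(-\log r)$, turning $\Dsn v=v^{\frac{n+2s}{n-2s}}$ into an integro-differential equation for $w$ on $\R$ analogous to \eqref{eq:intro-PDE} but with coefficients depending on both $n$ and $k$ through $\Gamma$-functions. Under \eqref{eq:Gamma-quot} a positive constant solution $w\equiv w_\ast$ should exist, and I would then search for non-constant Delaunay-type periodic solutions bifurcating from $w_\ast$; the two families of intervals appearing in the restatement of \eqref{eq:Gamma-quot} are expected to correspond to the two branches at which the linearized Jacobi operator acquires a zero mode in the appropriate Fourier class, in parallel with the Delaunay analysis of \cite{DPGW,fat}.

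Next, to pass from $\R^k$ to a general compact $\Sigma^k\subset\R^n$, I would set up a Lyapunov--Schmidt reduction in weighted $L^\infty$ spaces adapted to polyhomogeneous expansions in the normal distance $r$, in the spirit of \cite{ADGW, fat, Chan_DelaTorre1} and \Cref{th:Yamabe}. The approximate solution would be obtained by transplanting the flat profile through Fermi coordinates in a tubular neighborhood of $\Sigma^k$, suitably damped outside. Fredholm theory for the linearized operator about it should follow from the spectral information produced by the bifurcation step, and completeness should be automatic from the resulting sharp lower bound on the profile, combined with \Cref{lem:completeness}.

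I expect the main obstacle to be as much conceptual as analytic: making rigorous the renormalized notion of solution and verifying that it yields an honest constant-$Q_s$ metric. Unlike in the distributional regime $k\leq \frac{n-2s}{2}$, where one can mollify and pass to the limit, here one must justify term by term a polyhomogeneous expansion whose coefficients solve an infinite cascade of compatibility conditions, with \eqref{eq:Gamma-quot} ensuring their solvability at each level. Moreover, the loss of semilinearity (the nonlinearity is Serrin-subcritical relative to $N$) disables the direct stability-based fixed-point argument used in \Cref{th:Yamabe} and \cite{Chan_DelaTorre1}, so new cancellation structures in the cylindrical formulation will have to be uncovered in order to close the reduction.
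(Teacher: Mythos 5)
This statement is \Cref{conj:main}, which the paper explicitly poses as an \emph{open conjecture}: the authors offer no proof, and indeed they stress that for $k>\frac{n-2s}{2}$ no distributional solutions exist at all, so the whole point of the conjecture is that a construction would require a framework not developed in the paper (or anywhere else, to their knowledge). Your text is therefore not comparable to a paper proof, and, more importantly, it is not a proof: it is a research program whose decisive steps are left open, as you yourself concede in the final paragraph.

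The gaps are concrete. For $k>\frac{n+2s}{2}$ (the entire range of the conjecture) the model profile $r^{-\frac{n-2s}{2}}$ is not even locally integrable across $\Sigma$, since $\int_0^1 r^{N-1-\frac{n-2s}{2}}\,dr$ diverges when $N=n-k<\frac{n-2s}{2}$; consequently $\Dsn v$ cannot be evaluated by the usual singular integral even at points away from $\Sigma$, and none of the machinery you invoke by analogy (the cylindrical equation \eqref{eq:intro-PDE}, Green--Poisson representations, weighted $L^\infty$ barriers and maximum principles, Lyapunov--Schmidt in Fermi coordinates as in \Cref{th:Yamabe}) is defined without first constructing the ``Hadamard finite-part / renormalized'' notion of solution you mention. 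You do not construct it, nor do you verify that the condition \eqref{eq:Gamma-quot} closes the ``infinite cascade of compatibility conditions'' you allude to; that verification is precisely the conjectural content. The claim that the two families of intervals in \eqref{eq:Gamma-quot} correspond to zero modes of a Jacobi operator along a Delaunay branch is likewise asserted, not proved, and the linear theory needed for the reduction (Fredholmness in spaces of polyhomogeneous functions with non-integrable leading term) has no analogue in the cited works. In short, the proposal is a reasonable sketch of how one might attack the problem, broadly consistent with the authors' own speculations, but every step beyond the formal Emden--Fowler computation is missing, so the statement remains exactly what the paper says it is: open.
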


\begin{remark}
Even in the model case, the conformal factor of the singular metrics cannot be understood classically because of the low integrability near the singularity. An extended notion of solutions (e.g. \cite{DSV-def})
would be necessary. We are pursuing this direction in a coming work. 
\end{remark}

\section{Main results}
\label{sec:main-results}

\subsection{Classification of local behavior}

We show the exact singular behavior of solutions to
\begin{equation}\label{eq:loc-beh}\begin{dcases}
		\Ds u=u^{p}
		& \text{ in } B_1\setminus\set{0},\\
		u=g
		& \text{ in } \R^N\setminus B_1,\\
		u>0
		& \text{ in } B_1\setminus \set{0},
\end{dcases}
\qquad p=\frac{N}{N-2s},
\end{equation}
for an exterior datum $g$ with finite Poisson integral, namely
\begin{equation}\label{eq:g-finite}
\norm[\tilde{L}^1_{2s}(\R^N\setminus B_1)]{g}
:=\int_{\R^N\setminus B_1}
\dfrac{
	|g(y)|
}{
	(|y|^2-1)^s
	|y|^N
}
\,dy
<+\infty.
\end{equation}
More precisely, we have the following.

\begin{thm}[Exact local behavior]
	\label{thm:behavior-rad}
	Let $u\in C^2(B_1\setminus\set{0})$ be a solution of \eqref{eq:loc-beh}--\eqref{eq:g-finite}.	
	Then, either $u$ has a removable singularity at the origin, or
	\[
	u(x)
	=\dfrac{1}{
		|x|^{N-2s}
		\left(\log\frac{1}{|x|}\right)^{\frac{N-2s}{2s}}
	}
	\left(
		c_0
		+c_1\frac{\log\log\frac{1}{|x|}}{\log\frac{1}{|x|}}
		+O\left(
			\frac{1}{\log\frac{1}{|x|}}
		\right)
	\right)
	\quad \textas x\to 0^+,
	\]
	where $c_0>0$, $c_1<0$ are constants given in \eqref{eq:c0-explicit} and \Cref{rmk:c1}.
\end{thm}

\begin{remark}
	 \Cref{thm:behavior-rad} remains valid for general smooth $\Omega\subset \R^N$ in place of $B_1$ provided $0\in\Omega$. %
\end{remark}

The $\log\log$-correction was first observed by L. V\'{e}ron \cite[Lemme 3.3]{Veron1981} in 1981 when $s=1$.

Our proof has a different flavor from Aviles \cite{Aviles-2} and is given in \Cref{sec:behavior-rad}, with main ideas sketched in \Cref{sec:ODE-ideas}.

Recall (from \Cref{subsec:LEeq}) that when $s=1$, the Emden--Fowler transformation $v$ of the solution $u$ satisfies the ODE
\[
-a_{\frac{N}{N-2}} \p_t v=v^{\frac{N}{N-2}},
\]
asymptotically as $t=-\log r\to+\infty$, for $a_{\frac{N}{N-2}}>0$. Surprisingly, by exploiting the integral equation associated to \eqref{eq:loc-beh} under the Emden--Fowler transformation, the local behavior in the fractional setting is also revealed to be driven by an ODE of the same form, namely \eqref{eq:v-4-bar}. %

We believe that our method is of independent interest, since we transform a nonlocal problem into the study of a local first order ODE in one dimension. This seems to be the first time that this technique is used in nonlocal problems.

\begin{remark}
	We emphasize that the scalar ODE \eqref{eq:v-4-bar} is valid only \emph{asymptotically}. Indeed, a nonlocal equation cannot be equivalent to a local one, as independently observed in %
	\cite[Proposition 3.2]{Chen2021}.
\end{remark}

\begin{remark}
	In dimension $N=1$, the half Laplacian can be factorized as the composition of the Hilbert transformation $H$ and the ordinary first derivative. Since $H^2=-\mathrm{Id}$, inverting $H$ leads to the equivalence
	\[
	\Dh u=f(x,u)
	\quad \text{ in } \R
	\qquad \Longleftrightarrow \qquad
	u'=-H(f(x,u))
	\quad \text{ in } \R.
	\]
	This can be considered as a global analogue of \eqref{eq:v-4-bar}. We thank Enno Lenzmann for pointing this out to us during the conference ``Calculus of Variations and PDEs: recent developments and future directions'' at ETH Z\"{u}rich in June 2021.
\end{remark}

{

The equivalent classification has been done for the whole range $p\in(1,\tfrac{N+2s}{N-2s}] \setminus \{\tfrac{N}{N-2s}\}$ in various works by Caffarelli, Jin, Sire and Xiong, Chen and Quaas, and Yang and Zou in \cite{CJSX,CQ,YZ1,YZ2}. %
(See also %
\cite{JdQSX} for the study of local behaviors near higher dimensional singularities.)
During the preparation of this manuscript, we have learned about the recent work of Wei and Wu \cite{WW} based on adapting Aviles's energy method 
to the nonlocal setting by using the extension. Here we provide an alternative proof which allows to cover equations with more general integro-differential operators that admit a Green function comparable to the Riesz kernel. %
\color{blue}
}

\bigskip

\subsection{Non-existence of exterior solutions}

We now give Liouville-type results inspired by asymptotic expansions. As in the local case, the equation
\begin{equation}\label{eq:ext}
\begin{dcases}
	\Ds u=u^{\frac{N}{N-2s}}
	& \text{ in } \R^N\setminus B_1,\\
	u\geq 0
	& \text{ in } \R^N\setminus \set{0}. %
\end{dcases}\end{equation}
in an exterior domain does not possess non-trivial solutions.

\begin{thm}[Liouville theorem for exterior solutions, integral version]
	\label{thm:Liouville-int}
	Suppose $u \in L^{\frac{N}{N-2s}}_{-2s}(\R^N)$  satisfies %
	the integral inequality
	\begin{equation*}
		\begin{dcases}
			u(x)
			\geq
			\int_{\R^N\setminus B_1}
			\dfrac{
				u(y)^{\frac{N}{N-2s}}
			}{
				|x-y|^{N-2s}
			}
			\,dy
			& \,\forall x\in\R^N\setminus B_1,\\
			u\geq 0
			& \text{ in } \R^N\setminus\set{0},
		\end{dcases}
	\end{equation*}
	and the Harnack inequality
	\begin{equation}\label{eq:Harnack-statement}
		\sup_{B_{2R}\setminus B_{R/2}} u
		\leq C \inf_{B_{2R}\setminus B_{R/2}} u,
		\qquad \forall R>4.
	\end{equation}
	Then $u\equiv 0$ in $\R^N \setminus B_1$.
\end{thm}
\begin{thm}[Liouville theorem for exterior solutions, integro-differential version]
	\label{thm:Liouville-diff}
	Suppose  $u\in L^1_{2s}(\R^N)\cap L^{\frac{N}{N-2s}}_{-2s}(\R^N)$ is a distributional solution to \eqref{eq:ext}.
	Then, $u \equiv 0$ in $\R^N$.
\end{thm}

The proof is given in \Cref{sec:Liouville}, in terms of the Kelvin transform of $u$. %
It is based on an asymptotic analysis (on $u$) at infinity, where no consistent behavior is possible. Heuristically, one would expect that the solution is polyhomogeneous as in \eqref{eq:logpoly-infty}, while no such function would satisfy \eqref{eq:ext},
as seen by the formal computation (justified in \Cref{prop:loghomo-infty})
\begin{equation}\label{eq:asymp-infty}
\Ds \dfrac{(\log r)^\nu}{r^{N-2s}}
\sim \nu\kappa_1 \dfrac{(\log r)^{\nu-1}}{r^N},
	\qquad \text{ as } r\to\infty,
\end{equation}
where $\kappa_1>0$. 
Indeed, no $\nu>0$ would solve $\nu-1=\frac{N}{N-2s}\nu$. We emphasize that \eqref{eq:asymp-infty} is to be contrasted with
\begin{equation*}
	\Ds \dfrac{1}{r^{N-2s}(\log\frac1r)^{\nu}}
	\sim \nu\kappa_1 \dfrac{1}{r^N(\log\frac1r)^{\nu+1}},
	\qquad \text{ as } r\to 0^+,
\end{equation*}
where $\nu=\frac{N-2s}{2s}$ does give the correct asymptotic behavior (see \Cref{prop:loghomo}). Here the \emph{opposite sign} in $\nu$ can be traced back to the definition of $\kappa_1$ in \eqref{eq:Ki}. %

\subsection{Existence of profile}
Concerning the existence of radial singular solutions to
\begin{equation}\label{eq:main}\begin{cases}
		\Ds u=u^{\frac{N}{N-2s}}
		& \text{ in } B_1\setminus\set{0},\\
		u=0
		& \text{ in } \R^N\setminus B_1,
\end{cases}\end{equation}
we have (writing $u(x)=u(r)$ for $r=|x|$):
\begin{thm}\label{thm:rad-s}
	Let $s\in(0,1)$, $N>2s$. There exists $\eps_1\in(0,1)$ such that for any $\eps\in(0,\eps_1)$, \eqref{eq:main} has a positive solution $u_\eps \in L^{\frac{N}{N-2s}}(B_1) \cap C^{\infty}(B_1\setminus \set{0}) \cap C^s(\R^N\setminus\set{0})$ such that,%
	\[
	u_\eps(r)
	=
	\begin{dcases}
		\dfrac{1}{
			r^{N-2s}
			\left(\log\frac{1}{\eps r}\right)^{\frac{N-2s}{2s}}
		}
		\left[
		c_0
		+c_1\dfrac{
			\log\log\frac{1}{\eps r}
		}{
			\log \frac{1}{\eps r}
		}
		+O\left(
		\dfrac{
			|\log\eps|^{-\gamma}
		}{
			(\log\frac{1}{r})^{\frac32}
		}
		\right)
		\right]
		& \textas r\to 0^+,\\
		O\Big(
		|\log\eps|^{-\gamma}
		(1-r^2)^s
		\Big)
		& \textas r\to 1^-.
	\end{dcases}\]
	where $\gamma=\min\set{\frac{N-2s}{2s},\frac12}$ and the constants $c_i=c_i(N,s)$ are defined in \eqref{eq:c0}--\eqref{eq:c1}.
\end{thm}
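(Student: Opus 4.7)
The plan is to construct $u_\eps$ as a perturbation of an explicit approximate profile $\bar{u}_\eps$, by a contraction mapping in a weighted $L^\infty$ space, following the strategy of the authors' local construction \cite{Chan_DelaTorre1} adapted to the non-local Dirichlet problem through the Green--Poisson formulation in \Cref{defn:sol}\,(5). Throughout I write $p=\frac{N}{N-2s}$.

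For the approximate solution, the asymptotic ODE \eqref{eq:intro-ODE} indicates that in the Emden--Fowler variable $t=-\log r$ a blowing-up solution behaves like $v(t)\sim c_0 t^{-(N-2s)/(2s)}$. Accordingly I would take
\[
\bar{u}_\eps(r)
=\chi(r)\cdot r^{-(N-2s)}
    \left(\log\tfrac{1}{\eps r}\right)^{-\frac{N-2s}{2s}}
    \left[
        c_0
        +c_1\,\tfrac{\log\log\frac{1}{\eps r}}{\log\frac{1}{\eps r}}
    \right],
\]
where $\chi$ is a smooth radial cutoff with $\chi\equiv 1$ on $\{r\leq\tfrac12\}$ and $\chi\equiv 0$ on $\{r\geq 1\}$. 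The shift $\eps r$ inside the logarithm forces $\bar{u}_\eps$ to be of size $|\log\eps|^{-(N-2s)/(2s)}$ near $r=1$, providing the small tail that drives the perturbative argument. The constants $c_0,c_1$ are pinned down by formally plugging the ansatz into \eqref{eq:intro-PDE} and matching the first two orders in powers of $1/\log\frac{1}{\eps r}$: the leading coefficient depends only on $b_{s,p}=0$ together with $a_{s,p}>0$ at $p=\frac{N}{N-2s}$ (\Cref{cor:K-int-0}, \Cref{lem:emden_change}), while $c_1$ is chosen to cancel the first sub-leading correction coming from a refined expansion of the one-dimensional kernel $\tilde{K}_0$.

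Writing $u=\bar{u}_\eps+\phi$ and using the Green--Poisson identity, the problem becomes the fixed-point equation
\[
\phi=T_\eps[\phi]
:=\cG_{B_1}\!\left[
    (\bar{u}_\eps+\phi)_+^{p}-\Ds\bar{u}_\eps
\right]
\qquad\textin B_1,
\]
with $\phi\equiv 0$ on $\R^N\setminus B_1$. I would work in a weighted $L^\infty$ space $X$ whose norm simultaneously enforces the near-singularity weight $r^{-(N-2s)}(\log\frac{1}{\eps r})^{-(N-2s)/(2s)}(\log\frac{1}{r})^{-3/2}$ and the boundary weight $(1-r^2)^s$, each scaled by $|\log\eps|^{-\gamma}$. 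By construction of the two-term ansatz, the source $\Ds\bar{u}_\eps-\bar{u}_\eps^{p}$ has small norm in the corresponding dual-weighted space. The central mapping estimate $\|\cG_{B_1}[f]\|_X\lesssim\|f\|_{X^\ast}$ is to be proved from the explicit kernel \eqref{eq:green_B1}, splitting $\cG_{B_1}$ into a near-diagonal piece (comparable to $\Ints$) and a far-field piece controlled by the boundary behavior of the Green function. The decisive \emph{stability} input is that the linearization coefficient $p\bar{u}_\eps^{p-1}\asymp r^{-2s}(\log\frac{1}{\eps r})^{-1}$ is dominated by the fractional Hardy potential with an extra logarithmic gain, so the linear term $\cG_{B_1}[p\bar{u}_\eps^{p-1}\phi]$ costs only a factor $(\log\frac{1}{\eps r})^{-1}$ in $X$, while the quadratic remainder $(\bar{u}_\eps+\phi)^{p}-\bar{u}_\eps^{p}-p\bar{u}_\eps^{p-1}\phi$ is absorbed by the smallness of $\phi$. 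Hence $T_\eps$ is a contraction on a ball of radius $\sim|\log\eps|^{-\gamma}$ in $X$ for $\eps\ll 1$, and its fixed point $\phi_\eps$ delivers $u_\eps=\bar{u}_\eps+\phi_\eps$ with the stated two-term expansion. Positivity and the regularity $C^\infty(B_1\setminus\{0\})\cap C^s(\R^N\setminus\{0\})$ then follow from the maximum principle together with standard interior and boundary bootstrap for the fractional Dirichlet problem.

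The main obstacle will be the design of the weighted norms so that $\cG_{B_1}$ is bounded on them with the correct logarithmic loss, while at the same time producing the exact power $-\tfrac{3}{2}$ on $\log\frac{1}{r}$ in the error term and accommodating the sharp $C^s$ boundary decay. Unlike the local case, no derivative estimates are available, so the analysis has to be carried out purely at the integral level; I expect pinning down the constant $c_1$ and the $\tfrac32$ exponent---both dictated by the subleading behavior of the kernel $\tilde{K}_0$ from \Cref{sec:behavior-rad}---to be the most delicate piece of bookkeeping.
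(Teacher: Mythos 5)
Your ansatz, weights and overall scheme are essentially those of the paper (\Cref{sec:radial}): the same two-term $\log$-polyhomogeneous profile with the $\eps$-shifted logarithm, the constants $c_0,c_1$ pinned down by the kernel moments $\kappa_1,\kappa_2$ of \eqref{eq:Ki}, a weighted $L^\infty$ fixed point, and the exponent $\tfrac32$ arising as the gap between the perturbation weight and the profile weight (your near-origin weight has total log-exponent $\tfrac{N-2s}{2s}+\tfrac32=\tfrac{N+s}{2s}$, exactly the paper's choice). The one structural difference is that you invert only $\Ds$ via $\cG_{B_1}$ and keep the linear potential term $p\bar u_\eps^{p-1}\phi$ inside the iteration, whereas the paper inverts the full linearized operator $\cL_\eps$ (barriers from Dyda's formula, maximum principle for weak-dual solutions, and the method of continuity, \Cref{prop:supersol}--\Cref{prop:exis_uniq}); your route is in principle a Neumann-series version of the same linear theory.

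However, the reason you give for the contraction contains a genuine error. You claim that since $p\bar u_\eps^{p-1}\asymp r^{-2s}(\log\tfrac{1}{\eps r})^{-1}$, the term $\cG_{B_1}[p\bar u_\eps^{p-1}\phi]$ ``costs only a factor $(\log\tfrac{1}{\eps r})^{-1}$ in $X$.'' It does not: at the Serrin-critical exponent ($b_{s,\frac{N}{N-2s}}=0$) the Green operator applied to $\log$-polyhomogeneous data loses exactly one logarithm, mapping $r^{-N}(\log\tfrac1r)^{-\nu-1}$ to roughly $\tfrac{1}{\nu\kappa_1}\,r^{-(N-2s)}(\log\tfrac1r)^{-\nu}$ near the origin (this is the content of \Cref{prop:loghomo} and of the computation in \Cref{prop:Green}, and is the very mechanism that creates the logarithmic profile in the first place). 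Hence the logarithm gained from the potential is exactly cancelled, and the map $\phi\mapsto \cG_{B_1}[p\bar u_\eps^{p-1}\phi]$ is \emph{not} small on $X$: using \eqref{eq:c0}, its leading-order norm on the weight with log-exponent $\nu$ is $\tfrac{N}{2s\nu}$, which is $<1$ only because one takes $\nu=\tfrac{N+s}{2s}>\tfrac{N}{2s}$ --- precisely the stability condition hidden in the paper's supersolution inequality (\Cref{prop:supersol} requires $\nu>\tfrac{N}{2s}$), giving the fixed but not small contraction factor $\tfrac{N}{N+s}$. Turning this leading-order statement into a uniform-in-$\eps$ contraction is where the actual work lies and is what your proposal skips: the error terms of \Cref{prop:loghomo} away from the origin, the two-sided mapping bounds of $\cG_{B_1}$ between the interior weight and the boundary weight $(1-|x|^2)_+^{s}$ (\Cref{prop:Green}), and the absorption of all these contributions within the gap $\tfrac{s}{N+s}$, together with the error bound of \Cref{prop:error}. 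So the scheme can be made to work, but not for the reason you state; as written, the claimed logarithmic smallness of the linearized term is false and your contraction estimate does not close without the $\nu>\tfrac{N}{2s}$ bookkeeping that the paper carries out in \Cref{prop:supersol}--\Cref{prop:G-new}.
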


Our proof, given in \Cref{sec:radial}, is based on fine asymptotic expansions obtained using the integro-differential operator, combined with barrier and fixed point arguments.   As in our previous work \cite{Chan_DelaTorre1}, we employ gluing methods in weighted $L^\infty$ spaces to avoid unnecessary derivative estimates. %

By comparing the formulae for the constants $c_0$, $c_1$ from \Cref{thm:behavior-rad} (through $\Ints$) and from \Cref{thm:rad-s} (through $\Ds$), we obtain two integral identities (see \Cref{lem:const-int}).

\subsection{Yamabe metric with high dimensional singularity}

\begin{thm}\label{th:Yamabe}
Let $n\geq 3$ be an odd integer, $k=\frac{n-1}{2}$ and $\Sigma\subset\R^n$ be a $k$-dimensional smooth compact submanifold without boundary. Then there exists a smooth positive solution to
\[
\Dh u=u^{\frac{n+1}{n-1}}
    \quad \text{ in } \R^n \setminus \Sigma,
\]
which grows like $\dfrac{c_0}{d_\Sigma^{N-1}(\log\frac{1}{d_\Sigma})^{N-1}}$ near %
$\Sigma$ %
and decays like $d_\Sigma^{-(n-1)}$ at infinity. Here $d_\Sigma$ denotes the distance function to $\Sigma$ and $c_0$ is given in \eqref{eq:c0}. Moreover, the associated Yamabe metric is complete.
\end{thm}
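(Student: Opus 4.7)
The plan is to promote the singular radial profile $u_\eps$ from Corollary~\ref{cor:rad-half} to a genuine singular solution along the curved submanifold $\Sigma$ by a global gluing scheme in weighted $L^\infty$ spaces, in the spirit of our previous local construction \cite{Chan_DelaTorre1}. With $s=1/2$ and $k=(n-1)/2$, the reduced normal dimension is $N=(n+1)/2$, so the transverse model is exactly the Serrin-critical fractional Lane--Emden equation addressed in the earlier part of the paper. The stability of $u_\eps$, already exploited in Theorem~\ref{thm:rad-s}, will be the key to close a contraction argument.

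\emph{Approximate solution and error.} In a tubular neighborhood $\mathcal{T}_\delta(\Sigma)$ I set up Fermi coordinates $(y,z)\in\Sigma\times\R^{N}$ with $d_\Sigma(x)=|z|$ and form the near-field approximation $\chi_\delta(|z|)\,u_\eps(|z|)$. Outside $\mathcal{T}_{\delta/2}(\Sigma)$ I match this to a far-field tail decaying like $d_\Sigma^{-(n-1)}$, i.e., with the same rate as the fundamental solution of $\Dh$; this can be manufactured by solving an exterior Dirichlet problem with the correct boundary data. Writing $\bar u$ for the resulting global approximation, I estimate the error $E(\bar u)=\Dh\bar u-\bar u^{N/(N-1)}$ in a weighted $L^\infty$ norm with weight $d_\Sigma^{N-1}(\log\tfrac{1}{d_\Sigma})^{N-1}$ close to $\Sigma$ and $d_\Sigma^{n-1}$ away from it. It splits into three pieces: (a) the curvature of $\Sigma$ deforming the flat transverse profile, handled by a Taylor expansion in the normal variable; (b) the cutoff transition, controlled by the fast decay of $u_\eps$ near $r=1^-$ from Theorem~\ref{thm:rad-s}; and (c) the genuinely non-local cross-interactions that $\Dh$ creates between distinct normal fibers, bounded by a Fubini argument exploiting the polynomial decay of the kernel $|x-\tilde x|^{-(n+1)}$ and the compactness of $\Sigma$. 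All three tend to zero as $\eps\to 0^+$.

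\emph{Linear theory, fixed point, and completeness.} Linearizing at $\bar u$ gives $L\phi=\Dh\phi-\tfrac{N}{N-1}\bar u^{1/(N-1)}\phi$. The Serrin-critical potential $\bar u^{1/(N-1)}\sim c/(d_\Sigma\log(1/d_\Sigma))$ sits on the stable side of the fractional Hardy inequality, which should yield invertibility of $L$ between two weighted $L^\infty$ spaces differing by a small logarithmic loss, exactly as in \cite{Chan_DelaTorre1}. Writing $u=\bar u+\phi$, the equation becomes
\[
L\phi=-E(\bar u)-Q(\phi),
\]
with $Q$ the superlinear remainder; a Banach contraction in a small ball of the weighted space then produces $\phi$, hence $u>0$ with the claimed singular asymptotics, and interior elliptic regularity upgrades $u$ to smoothness on $\R^n\setminus\Sigma$. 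Completeness of the associated metric follows from Lemma~\ref{lem:completeness}: transverse to $\Sigma$, the line element of $u^{4/(n-1)}|dx|^2$ behaves like $d_\Sigma^{-1}(\log(1/d_\Sigma))^{-1}\,dr$, whose integral diverges at $r=0^+$.

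\emph{Main obstacle.} The principal difficulty is piece (c) of the error estimate, the genuinely non-local cross-interaction contribution to $\Dh\bar u$, which has no analogue in the local construction \cite{Chan_DelaTorre1} and must be matched against the slow decay $d_\Sigma^{-(n-1)}$ imposed by the fundamental solution of $\Dh$, with essentially no margin to spare. Closely related is the need to carry two distinct criticalities simultaneously in a single $L^\infty$ norm---the logarithmic criticality at $\Sigma$ and the power criticality at infinity---which forces a delicate calibration of weights so that both the invertibility of $L$ and the smallness of $E(\bar u)$ hold in a common space.
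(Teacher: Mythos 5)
Your outline follows essentially the same route as the paper: cut off the radial profile of \Cref{cor:rad-half} in Fermi coordinates, estimate the error of $\Dhn$ applied to a fiberwise-radial function (the cross-fiber, genuinely nonlocal terms you worry about are exactly what \Cref{prop:Dh_nN} controls: inside the tube they are algebraically better than the leading term, outside they decay like $d_\Sigma^{-(n+1)}$), invert the linearized operator in weighted $L^\infty$, and conclude by contraction, positivity via $\tfrac12$-superharmonicity, and \Cref{lem:completeness}. Two implementation choices differ from the paper and affect how the argument closes. First, the paper builds no far-field tail by solving an exterior problem: the ansatz \eqref{eq:v_eps} is simply $u_\eps\chi_\tau$ extended by zero, and the decay $d_\Sigma^{-(n-1)}$ at infinity is carried by the perturbation, which lives in the space weighted by $\omega_{N-1-\frac{1}{N+1};n-1}$; this is consistent because the error decays like $d_\Sigma^{-(n+1)}$ away from $\Sigma$ and its Riesz potential decays exactly at the fundamental-solution rate (\Cref{lem:weights-Yamabe}). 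Second, and more substantively, the paper measures both error and perturbation in purely algebraic weights (shifted by $\tfrac{1}{N+1}$), not in the log-corrected weight you propose: with algebraic weights the Serrin-critical potential $\bar v_\eps^{1/(N-1)}\sim (r\log\tfrac{1}{\eps r})^{-1}$ is $o(r^{-1})$, hence negligible, so $\sL_\eps$ is a small perturbation of $\Dhn$, its inverse is essentially $\Inthn$, and the barriers are just Riesz potentials of the weights (\Cref{lem:barriers-Yamabe}, \Cref{prop:lin-Yamabe}). If instead you keep the weight $d_\Sigma^{-(N-1)}(\log\tfrac{1}{d_\Sigma})^{-(N-1)}$, the potential acting on it produces a term of exactly the same size as $\Ds$ of the weight itself (each loses one power of the logarithm, cf. \Cref{prop:loghomo}), so the operator is no longer a small perturbation of the fractional Laplacian and you would have to rerun the delicate log-weighted linear theory of \Cref{sec:radial} transversally and then match it to the algebraic decay at infinity---possible in principle, but this is precisely the double criticality the paper's choice of weights is designed to avoid, and the choice is legitimate because the geometric error is algebraically, not merely logarithmically, small.
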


Equivalently, by considering the harmonic extension of the conformal factor, the upper-half space $(\R^{n+1}_+, |dz|^2)$ admits a scalar-flat Escobar metric (conformal metric with zero scalar curvature and constant mean curvature on the boundary) which is singular along a $\frac{n-1}{2}$-submanifold $\Sigma$ of $\p\R^{n+1}_+=\R^n$: %
\begin{equation*}%
\begin{cases}
-\Delta u=0
    & \text{ in } \R^{n+1}_+,\\
\p_{\nu}u=u^{\frac{n+1}{n-1}}
    & \texton \p\R^{n+1}_+\setminus \Sigma.
\end{cases}\end{equation*}

The proof of \Cref{th:Yamabe} is given in \Cref{sec:Yamabe}. The approximate solution obtained from \Cref{thm:rad-s} with $s=\frac12$ %
induces an initial error which is sufficiently small in terms of both size and order of growth. Indeed, the error comes from the geometry of the singularity and is algebraic instead of logarithmic. The main difficulty lies in the construction of barriers via nonlocal computations %
in Fermi coordinates. %

\bigskip

\subsection{Arbitrary closed singularity}

\begin{thm}
	Let $\Omega\subset\R^N$ be a bounded domain and $\Sigma\subset\Omega$ be a closed subset. Then there exist two distinct sequences $u_\ell^{(1)}$, $u_\ell^{(2)}$ of positive very weak solutions to
	\begin{equation*}%
		\begin{cases}
			\Ds u=u^{\frac{N}{N-2s}}
			& \text{ in } \Omega,\\
			u=0
			& \text{ in } \R^N\setminus\Omega,
	\end{cases}\end{equation*}
	such that $u_\ell(x)\to+\infty$ as $x\to\Sigma$. Moreover, $u_\ell^{(1)} \to 0$ in $L^{\frac{N}{N-2s}}(\Omega)$ and $u_\ell^{(2)}$ converges to a non-trivial regular solution in $L^{\frac{N}{N-2s}}(\Omega)$.
\end{thm}

The proof is essentially a direct generalization of the variational argument of Pacard \cite{Pacard-3} to the nonlocal case (see also \cite{CL,ACGW}). A sketch can be found in \cite{CD-survey}.

\subsection{Radial symmetry}

By the direct method of moving plane, we have:

\begin{prop}[\!\!\cite{CLL2017, Dou2016}]
\label{prop:sym}
Let $u>0$ be a classical solution to
\[\begin{dcases}
\Ds u=u^{\frac{N}{N-2s}}
    & \text{ in } B_1\setminus\set{0},\\
u=0
    & \text{ in } \R^N\setminus B_1.
\end{dcases}\]
Then $u=u(r)$ is radially symmetric and $u'(r)<0$ for $r\in(0,1)$.
\end{prop}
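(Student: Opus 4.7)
The plan is to apply the direct method of moving planes in integral form, using the narrow-region and strong maximum principles for anti-symmetric functions developed by Chen, Li and Li \cite{CLL2017}. By rotational invariance of both the equation and the exterior Dirichlet condition, it suffices to establish reflection symmetry across an arbitrary hyperplane through the origin; radial symmetry then follows, and the strict inequality $u'(r) < 0$ comes from the strong maximum principle applied at each intermediate step.

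Fix a direction $e_1$. For $\lambda \in (-1, 0]$, set $T_\lambda = \{x_1 = \lambda\}$, $\Sigma_\lambda = \{x_1 < \lambda\}$, $x^\lambda = (2\lambda - x_1, x_2, \dots, x_N)$, and define $u_\lambda(x) := u(x^\lambda)$, $w_\lambda := u_\lambda - u$, and $D_\lambda := \Sigma_\lambda \cap B_1$. Away from the possible singularity of $u_\lambda$ at $2\lambda e_1$, isometry invariance of $\Ds$ and the equation give
\[
\Ds w_\lambda(x) = c_\lambda(x)\, w_\lambda(x), \qquad c_\lambda(x) = p\, \xi_\lambda(x)^{p-1} \geq 0, \qquad x \in D_\lambda,
\]
with $p = N/(N-2s)$ and $\xi_\lambda$ between $u$ and $u_\lambda$; on $\Sigma_\lambda \setminus D_\lambda$, the exterior condition $u \equiv 0$ together with $u_\lambda \geq 0$ yields $w_\lambda \geq 0$ automatically, which supplies the anti-symmetric boundary-type condition. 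For $\lambda$ close to $-1$, $D_\lambda$ is a thin crescent, so the narrow-region maximum principle for anti-symmetric functions \cite[Theorem 2.3]{CLL2017} yields $w_\lambda \geq 0$ in $D_\lambda$. Set $\lambda_0 := \sup\{\lambda \in [-1,0] : w_\mu \geq 0 \text{ in } D_\mu \text{ for all } \mu \leq \lambda\}$ and suppose for contradiction $\lambda_0 < 0$. By continuity $w_{\lambda_0} \geq 0$ on $\Sigma_{\lambda_0}$, and $w_{\lambda_0} \not\equiv 0$: else $u$ would be symmetric across $T_{\lambda_0}$ on $\Sigma_{\lambda_0}$, forcing $u(y^{\lambda_0}) = 0$ for $y \in \Sigma_{\lambda_0} \setminus B_1$ with $y^{\lambda_0} \in B_1$ (such $y$ exists for every $\lambda_0 \in (-1,0)$), contradicting $u > 0$. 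The strong maximum principle for anti-symmetric functions then gives $w_{\lambda_0} > 0$ in $D_{\lambda_0}$, and a standard continuation—apply the narrow-region principle to the thin slab $D_\lambda \setminus \overline{D_{\lambda_0 - \epsilon}}$ for small $\delta, \epsilon > 0$, using a positive lower bound for $w_{\lambda_0}$ on $\overline{D_{\lambda_0 - \epsilon}}$ inherited by $w_\lambda$ by continuity—pushes $\lambda$ slightly past $\lambda_0$, contradicting its maximality. Hence $\lambda_0 = 0$; repeating from the opposite side gives the reverse inequality, so $u$ is symmetric in $x_1$. Since $e_1$ is arbitrary, $u$ is radial, and strict positivity of $w_\lambda$ in $D_\lambda \setminus T_\lambda$ for $\lambda \in (-1, 0)$ gives $u'(r) < 0$ for $r \in (0,1)$.

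The principal obstacle is the interior singularity of $u_\lambda$ at $2\lambda e_1 \in D_\lambda$ when $\lambda \in (-1/2, 0)$: there $w_\lambda$ is unbounded, unlike in the classical bounded-solution setting of \cite{CLL2017}. Fortunately the sign is favorable, since $w_\lambda \to +\infty$ at this point is compatible with $w_\lambda \geq 0$; nonetheless one must verify that the nonlocal maximum principles still apply. This can be achieved by excising a small ball $B_\rho(2\lambda e_1)$, running the argument on the cut-off domain where $w_\lambda > 0$ on the inner boundary by construction, and passing to the limit as $\rho \to 0^+$ using the integrability $u \in L^{N/(N-2s)}(B_1) \cap \tilde{L}^1_{2s}(\R^N)$ guaranteed by the notion of solution, so that the integral representation of $\Ds w_\lambda$ remains well-defined throughout—this is essentially the technical adaptation implicit in \cite{Dou2016}.
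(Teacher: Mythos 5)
Your proposal is correct and follows essentially the same route as the paper: the direct method of moving planes based on the Chen--Li--Li narrow-region maximum principle for anti-symmetric functions, started near $\lambda=-1$ in a thin crescent and pushed to $\lambda=0$, with the singularity of $u_\lambda$ at the reflected origin rendered harmless by its favorable sign ($w_\lambda\geq 0$ automatically there). The only minor difference is in the continuation step, where you invoke the strong maximum principle plus a compactness/continuity argument, while the paper applies the narrow-region principle directly on the thin slab $\set{\Lambda\leq x_1<\lambda}$ (with the potential bounded below via $u\in C^s(\overline{B_1}\setminus\set{0})$ on the set where $w_\lambda<0$); both are standard variants of the same scheme.
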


In \cite{CD-survey}, we give a unified argument regardless of the removability of the singularity.

\section{Classification of local behavior}
\label{sec:behavior-rad}

In this section we prove \Cref{thm:behavior-rad}. The novelty is to transform the one-dimensional nonlocal equation into a \emph{first order ODE} in the asymptotic regime, via the corresponding integral equation. %
This seems to be new in the ``non-local community", although its root lies in the theory of delay differential equations. This method is robust for nonlocal operators whose Green's kernel is comparable to Riesz potential in the interior.

\subsection{Sketch of main ideas}
\label{sec:ODE-ideas}

Let us illustrate the key idea 
by presenting the argument for radially symmetric solutions $u(r)$ to the homogeneous Dirichlet problem, which in integral formulation reads $u=\cG_{B_1}[u^{\frac{N}{N-2s}}]$. Thus the Emden--Fowler transform $v(t)=r^{N-2s}u(r)$, $t=-\log r$, solves
	\begin{equation*}%
		v(t)
		=\int_{0}^{\infty}
		\int_{0}^{\pi}
		F\left(
		\dfrac{
			(e^{2t}-1)
			(1-e^{-2\bar{t}})
		}{
			1+e^{2(t-\bar{t})}
			-2e^{t-\bar{t}}\cos\theta
		}
		\right)
		\dfrac{
			\cH^{N-2}(\bS^{N-2})
			\sin^{N-2}\theta
			\,d\theta
		}{
			\bigl(
			1+e^{2(t-\bar{t})}
			-2e^{t-\bar{t}}\cos\theta
			\bigr)^{\frac{N-2s}{2}}
		}
		v(\bar{t})^{\frac{N}{N-2s}}
		\,d\bar{t},
	\end{equation*}
for $t\geq 0$, where $F(R)$ is a bounded increasing function with $F(0)=0$ and $F(+\infty)-F(R) \leq CR^{-\frac{N-2s}{2}}$, explicitly given in \eqref{eq:F}. We consider the asymptotics for large $t$. Since the singular kernel (which will be reduced to $\tilde\cK$ below) is exponentially small for $\bar{t}\leq \frac{t}{2}$, and $F$ approaches, as $\bar{t}\geq \frac{t}{2}\to+\infty$, the constant $F(+\infty)$ exponentially, we see that
	\begin{equation}\label{eq:v-2-rad}
	v(t)
	=\int_{\frac{t}{2}}^{\infty}
\tilde	\cK(t-\bar{t})
	v(\bar{t})^{\frac{N}{N-2s}}
	\,d\bar{t}
	+O(e^{-\frac{N-2s}{2}t}),
	\quad
	\tilde	\cK(t-\bar{t})
		\asymp
		\begin{dcases}
			1
			& \textfor t-\bar{t}\leq -1,\\
			\frac{1-|t-\bar{t}|^{2s-1}}{2s-1}
			& \textfor |t-\bar{t}|\leq 1,\\
		e^{-(N-2s)(t-\bar{t})}
			& \textfor t-\bar{t} \geq 1.
		\end{dcases}
	\end{equation}
Let us neglect the exponential error. Using Harnack's inequality on $\bar{t}\in[t-1,t+1]$ and the integrability of $\tilde\cK(t-\bar{t})$ there, we obtain
\[
v(t)
\geq c\int_t^\infty
	v(\bar{t})^{\frac{N}{N-2s}}
\,d\bar{t}.
\]
This is just an ordinary differential inequality in disguise:
\[
[-V'(t)]^{\frac{N-2s}{N}} \geq c V(t),
	\qquad \text{ for } \quad
V(t):=\int_t^\infty v(\bar{t})^{\frac{N}{N-2s}} \,d\bar{t}.
\]
It implies immediately $V(t)\leq Ct^{-\frac{N-2s}{2s}}$, and the same bound for $v(t)$. From this upper bound and the superlinearity of the power, we can refine \eqref{eq:v-2-rad} to
		\begin{equation*}%
			v(t)
			=
			\kappa
			\int_{t}^{\infty}
			v(\bar{t})^{\frac{N}{N-2s}}
			\,d\bar{t}
			+O\left(
			t^{-\frac{N}{2s}}\log t
			\right),
		\end{equation*}
for an explicit constant $\kappa>0$. Now, either the singularity is removable, or $v$ is eventually large enough to absorb the error and, from that point on, this ODE yields the exact singular behavior at the first order.

To get to the second order, we need a precise expansion of the integral of the kernel $\tilde \cK$ in \eqref{eq:v-2-rad} over an interval of length $C\log t$ ($C\gg 1$), so that terms of order $O(\log t)$ are cancelled. The equation improves to
\begin{align*}
v(t)
&=\kappa\int_{t}^{\infty}
	 v(\bar{t})^{\frac{N}{N-2s}}
\,d\bar{t}
+\tilde{\kappa} t^{-\frac{N}{2s}}
+O\Big(t^{-\frac{N}{2s}-1}(\log t)^3\Big),
\end{align*}
where $\tilde{\kappa}<0$ is another exact constant. Finally, the second order correction $\phi(t)$ satisfies
\begin{align*}
\phi(t)
&=\frac{N}{2s}\int_{t}^{\infty}
	\bar{t}^{-1}
	\phi(\bar{t})
\,d\bar{t}
+\tilde{\kappa} t^{-\frac{N}{2s}}
+O\Big(t^{-\frac{N}{2s}-1}(\log t)^4\Big),
\end{align*}
an ODE with integrating factor $t^{\frac{N-2s}{2s}}$,
due to the coefficient $\frac{N}{2s}$ in front of the integral. The term $\tilde{\kappa}t^{-1}$ leads to the exact logarithmic second order behavior.

\bigskip

In the rest of this \Cref{sec:behavior-rad} we prove \Cref{thm:behavior-rad} in details. Let us point out the missing ingredients:
\begin{itemize}
\item The error in \eqref{eq:v-2-rad} is not necessarily signed. Thus, one needs to argue that if the upper bound for $v$ (or equivalently for $V$) eventually does not hold, then the error is absorbed in a suitably large region where the ODE argument can be carried out to yield a contradiction.
\item If the exterior data is not radial, one requires Jensen's (or Harnack's) inequality when taking spherical means. While equality is almost achieved, the error has to be estimated quantitatively.
\end{itemize}

\normalcolor

\subsection{Integral Emden--Fowler transformation}

Note that $u\in L^{\frac{N}{N-2s}}(B_1) \cap \tilde{L}^1_{2s}(\R^N\setminus B_1)$ is also a Green--Poisson solution to \eqref{eq:loc-beh}, namely, in polar coordinates,
\[
u(r,\omega)
=\cG_{B_1}[u^{\frac{N}{N-2s}}](r,\omega)
    +\cP_{B_1}[g](r,\omega),
\quad \text{ for } r\in(0,1),\,\omega\in\bS^{N-1}.
\]%
We define the Emden--Fowler transform
\begin{equation}\label{eq:uv}
v(t,\omega):=r^{N-2s}u(r,\omega),
	\qquad
t=-\log r.
\end{equation}
Note that from $u\in L^{\frac{N}{N-2s}}(B_1)$ we know that $v\in L^{\frac{N}{N-2s}}((0,+\infty)\times\bS^{N-1})$. %
\begin{lem}
It holds
\begin{equation}\label{eq:v-1}
\begin{split}
	v(t,\omega)
	&=\int_{0}^{\infty}
	\int_{\bS^{N-1}}
	F\left(
	\dfrac{
		(e^{2t}-1)
		(1-e^{-2\bar{t}})
	}{
		1+e^{2(t-\bar{t})}
		-2e^{t-\bar{t}}
		\omega\cdot\bar{\omega}
	}
	\right)
	\dfrac{
		v(\bar{t},\bar{\omega})^{\frac{N}{N-2s}}
	}{
		\left(
		1+e^{2(t-\bar{t})}
		-2e^{t-\bar{t}}
		\omega\cdot\bar{\omega}
		\right)^{\frac{N-2s}{2}}
	}
	\,d\cH^{N-1}_{\bar{\omega}}
	\,d\bar{t}\\
	&\quad\;
	+e^{-(N-2s)t}\cP_{B_1}[g](e^{-t},\omega),
\end{split}\end{equation}
for $t\geq 0$ and $\omega\in\bS^{N-1}$, where
	\begin{equation}\label{eq:F}
	F(R)
	=
	\dfrac{
		\Gamma(\frac{N}{2})
	}{
		2^{2s}
		\pi^{\frac{N}{2}}
		\Gamma(s)^2
	}
	\int_{0}^{R}
	\dfrac{
		\tau^{s-1}
	}{
		(\tau+1)^{\frac{N}{2}}
	}
	\,d\tau
	\end{equation}
is a non-negative, bounded increasing function that satisfies $F(+\infty)=C_{N,-s}$, the normalization constant for the Riesz kernel given in \eqref{eq:C-Ns}.
\end{lem}

\begin{proof}
Recalling \eqref{eq:green_B1}, we have
	\[
	u(x)
	=\int_{B_1}
	F\left(
	\dfrac{
		(1-|x|^2)
		(1-|\bar{x}|^2)
	}{
		|x-\bar{x}|^2
	}
	\right)
	\dfrac{
		u(\bar{x})^{\frac{N}{N-2s}}
	}{
		|x-\bar{x}|^{N-2s}
	}
	\,d\bar{x}
	+\cP_{B_1}[g](x).
	\]
In polar coordinates $x=(r,\omega)$, $\bar{x}=(\bar{r},\bar{\omega})$, the function $v(t,\omega)$ as defined in \eqref{eq:uv} satisfies %
	\[\begin{split}
	v(-\log r,\omega)
	&=\int_{0}^{1}
	\int_{\bS^{N-1}}
	F\left(
	\dfrac{
		(r^{-2}-1)
		\bigl(
		1-\bar{r}^2
		\bigr)
	}{
		1+
		\bigl(
		\frac{\bar{r}}{r}
		\bigr)^2
		-2\frac{\bar{r}}{r}\omega\cdot\bar{\omega}
	}
	\right)
	\dfrac{
		v(-\log\bar{r},\bar{\omega})^{\frac{N}{N-2s}}
	}{
		\bigl(
		1+
		\bigl(
		\frac{\bar{r}}{r}
		\bigr)^2
		-2\frac{\bar{r}}{r}\omega\cdot\bar{\omega}
		\bigr)^{\frac{N-2s}{2}}
	}
	\,d\cH^{N-1}_{\bar{\omega}}
	\,\dfrac{d\bar{r}}{\bar{r}}\\
&\quad\;
	+r^{N-2s}\cP_{B_1}[g](r,\omega).
\end{split}\]
	Changing variables to $r=e^{-t}$, $\bar{r}=e^{-\bar{t}}$ yields \eqref{eq:v-1}.
\end{proof}

\subsection{Upper bound}

We start with a restatement of the Harnack inequality \cite[Proposition 3.1]{YZ1}. %

\begin{lem}\label{lem:harnack-t}
	For any $t\geq 2$,
	\[
	\sup_{[t-1,t+1]\times \bS^{N-1}}v
	\leq C\inf_{[t-1,t+1]\times \bS^{N-1}}v.
	\]
\end{lem}
We control the contribution in the region close to the boundary $\bar{t}=0$ by an exponentially small error.
\begin{lem}
For any $t\geq 4$ and $\omega\in\bS^{N-1}$,
\begin{equation}\label{eq:v-2}
	v(t,\omega)
	=\int_{t/2}^{\infty}\int_{\bS^{N-1}}
		\cK(t-\bar{t},\omega\cdot\bar{\omega})
		v(\bar{t},\bar{\omega})^{\frac{N}{N-2s}}
	\,d\cH^{N-1}_{\bar{\omega}}\,d\bar{t}
	+O(e^{-\frac{N-2s}{2}t}),
\end{equation}
where
\begin{equation}\label{eq:cK}
		\cK(t-\bar{t},\omega\cdot\bar{\omega})
		:=
		\dfrac{
			C_{N,-s}
		}{
			\bigl(
			1+e^{2(t-\bar{t})}
			-2e^{t-\bar{t}}\omega\cdot\bar{\omega}
			\bigr)^{\frac{N-2s}{2}}
		}
		\asymp
		\begin{dcases}
			1
			& \textfor t-\bar{t}\leq -1,\\
		e^{-(N-2s)(t-\bar{t})}
			& \textfor t-\bar{t} \geq 1,
		\end{dcases}
\end{equation}
\begin{equation}\label{eq:cK-int}
\int_{t-1}^{t+1}\int_{\bS^{N-1}}
		\cK(t-\bar{t},\omega\cdot\bar{\omega})
\,d\cH^{N-1}_{\bar{\omega}}\,d\bar{t}
\asymp 1,
	\quad \text{ independently of $t\geq 2$ and $\omega\in\bS^{N-1}$}.
\end{equation}
Here the constant in the error depends only on $N$, $s$, $\norm[L^{\frac{N}{N-2s}}((0,\infty)\times\bS^{N-1})]{v}$ and $\norm[\tilde{L}^1_{2s}(\R^N\setminus B_1)]{g}$.
\end{lem}

\begin{proof}
We rewrite \eqref{eq:v-1}, singling out the dominant term, as
\begin{equation*}\begin{split}
	v(t,\omega)
	&=\int_{t/2}^{\infty}
	\int_{\bS^{N-1}}
	F(+\infty)
	\dfrac{
		v(\bar{t},\bar{\omega})^{\frac{N}{N-2s}}
	}{
		\left(
		1+e^{2(t-\bar{t})}
		-2e^{t-\bar{t}}
		\omega\cdot\bar{\omega}
		\right)^{\frac{N-2s}{2}}
	}
	\,d\cH^{N-1}_{\bar{\omega}}
	\,d\bar{t}\\
	&\quad\;
	-\int_{t/2}^{\infty}
	\int_{\bS^{N-1}}
	\left[
	F(+\infty)
	-F\left(
	\dfrac{
		(e^{2t}-1)
		(1-e^{-2\bar{t}})
	}{
		1+e^{2(t-\bar{t})}
		-2e^{t-\bar{t}}
		\omega\cdot\bar{\omega}
	}
	\right)
	\right]
	\dfrac{
		v(\bar{t},\bar{\omega})^{\frac{N}{N-2s}}
		\,d\cH^{N-1}_{\bar{\omega}}
		\,d\bar{t}
	}{
		\left(
		1+e^{2(t-\bar{t})}
		-2e^{t-\bar{t}}
		\omega\cdot\bar{\omega}
		\right)^{\frac{N-2s}{2}}
	}\\
	&\quad\;
	+\int_{0}^{t/2}
	\int_{\bS^{N-1}}
	F\left(
	\dfrac{
		(e^{2t}-1)
		(1-e^{-2\bar{t}})
	}{
		1+e^{2(t-\bar{t})}
		-2e^{t-\bar{t}}
		\omega\cdot\bar{\omega}
	}
	\right)
	\dfrac{
		v(\bar{t},\bar{\omega})^{\frac{N}{N-2s}}
	}{
		\left(
		1+e^{2(t-\bar{t})}
		-2e^{t-\bar{t}}
		\omega\cdot\bar{\omega}
		\right)^{\frac{N-2s}{2}}
	}
	\,d\cH^{N-1}_{\bar{\omega}}
	\,d\bar{t}\\
	&\quad\;
	+e^{-(N-2s)t}\cP_{B_1}[g](e^{-t},\omega)\\
&=:\int_{t/2}^{\infty}\int_{\bS^{N-1}}
	\cK(t-\bar{t},\omega\cdot\bar{\omega})
	v(\bar{t},\bar{\omega})^{\frac{N}{N-2s}}
\,d\cH^{N-1}_{\bar{\omega}}\,d\bar{t}
+I_1+I_2+I_3.
\end{split}\end{equation*}
We estimate the error terms one by one. For $I_1$, since $F$ is increasing, for any $\bar{t}\geq \frac{t}{2} \geq 2$ and $\omega,\bar{\omega}\in\bS^{N-1}$,
	\[\begin{split}
		F(+\infty)
		-F\left(
		\dfrac{
			(e^{2t}-1)
			(1-e^{-2\bar{t}})
		}{
			1+e^{2(t-\bar{t})}
			-2e^{t-\bar{t}}\omega\cdot\bar{\omega}
		}
		\right)
		&
		\leq
		F(+\infty)-F(c e^{t})
		\leq
		C\int_{c e^{t}}^{\infty}
		\tau^{-\frac{N-2s}{2}-1}
		\,d\tau
		\leq
		Ce^{-\frac{N-2s}{2}t}.
	\end{split}\]
Using \Cref{lem:harnack-t} twice, the singular kernel is bounded in $L^1$ when $|t-\bar{t}|\leq 1$ and in $L^\infty$ otherwise:
\begin{align*}
|I_1|
&\leq Ce^{-\frac{N-2s}{2}t}
	\left (
		\int_{t/2}^{t-1}
		+\int_{t+1}^{\infty}
	\right )
	\int_{\bS^{N-1}}
		\frac{
			v(\bar{t},\bar{\omega})^{\frac{N}{N-2s}}
		}{
			|e^{t-\bar{t}}-1|^{N-2s}
		}
	\,d\cH^{N-1}_{\bar{\omega}}\,d\bar{t}\\
&\quad\;
	+Ce^{-\frac{N-2s}{2}t}
	\int_{t-1}^{t+1}
		\int_{\bS^{N-1}}
			\frac{
				v(t,e_1)
			}{
				\left(
				1+e^{2(t-\bar{t})}
				-2e^{t-\bar{t}}
				\omega\cdot\bar{\omega}
				\right)^{\frac{N-2s}{2}}
			}
	\,d\cH^{N-1}_{\bar{\omega}}\,d\bar{t}\\
&\leq C\norm[L^{\frac{N}{N-2s}}((0,\infty)\times\bS^{N-1})]{v}^{\frac{N}{N-2s}}e^{-\frac{N-2s}{2}t}.
\end{align*}
For $I_2$ we get exponential decay from the singular kernel from $t-\bar{t}\geq \frac{t}{2}$, namely
\begin{align*}
|I_2|
&\leq CF(+\infty)
	\int_{0}^{t/2}\int_{\bS^{N-1}}
	\frac{
		v(\bar{t},\bar{\omega})
	}{
		|e^{t-\bar{t}}-1|^{N-2s}
	}
	\,d\cH^{N-1}_{\bar{\omega}}\,d\bar{t}
\leq C\norm[L^{\frac{N}{N-2s}}((0,\infty)\times\bS^{N-1})]{v}^{\frac{N}{N-2s}}e^{-\frac{N-2s}{2}t}.
\end{align*}
For $I_3$ we simply use the bound of the Poisson integral \eqref{eq:Poisson-bound} so that, recalling $t\geq 4$,
\[
|I_3|
\leq C\norm[\tilde{L}^1_{2s}(\R^N\setminus B_1)]{g}
	e^{-(N-2s)t}.
\]
Combining these estimates, \eqref{eq:v-2} follows. The properties of $\cK$ are immediate.
\end{proof}

We now replace the integrable, singular kernel by a regular one, using Harnack's inequality.

\begin{lem}
	For any $t\geq 4$,
\begin{align}
\label{eq:v-3-1}
	v(t,\omega)
\geq c\int_{t/2}^{\infty}
e^{-(N-2s)(t-\bar{t})_+}
\int_{\bS^{N-1}}
v(\bar{t},\bar{\omega})^{\frac{N}{N-2s}}
\,d\cH^{N-1}_{\bar{\omega}}
\,d\bar{t}
-Ce^{-\frac{N-2s}{2}t},\\
\label{eq:v-3-2}
	v(t,\omega)
\leq C\int_{t/2}^{\infty}
e^{-(N-2s)(t-\bar{t})_+}
\int_{\bS^{N-1}}
v(\bar{t},\bar{\omega})^{\frac{N}{N-2s}}
\,d\cH^{N-1}_{\bar{\omega}}
\,d\bar{t}
+Ce^{-\frac{N-2s}{2}t},
\end{align}
where $C,c>0$ are constants depending only on $N$, $s$, $\norm[L^{\frac{N}{N-2s}}((0,\infty)\times\bS^{N-1})]{v}$ and $\norm[\tilde{L}^1_{2s}(\R^N\setminus B_1)]{g}$.
\end{lem}

\begin{proof}
We recall from \eqref{eq:cK} that $\cK(t-\bar{t},\omega\cdot\bar{\omega}) \asymp e^{-(N-2s)(t-\bar{t})_+}$ for $|t-\bar{t}|\geq 1$ and $\omega,\bar{\omega}\in\bS^{N-1}$. For $|t-\bar{t}|\leq 1$ we use \Cref{lem:harnack-t} twice and \eqref{eq:cK-int} to obtain
\begin{align*}
\int_{t-1}^{t+1}\int_{\bS^{N-1}}
	\cK(t-\bar{t},\omega\cdot\bar{\omega})
	v(\bar{t},\bar{\omega})^{\frac{N}{N-2s}}
\,d\cH^{N-1}_{\bar{\omega}}\,d\bar{t}
&\asymp 
v(t,\omega)^{\frac{N}{N-2s}}
\int_{t-1}^{t+1}\int_{\bS^{N-1}}
\cK(t-\bar{t},\omega\cdot\bar{\omega})
\,d\cH^{N-1}_{\bar{\omega}}\,d\bar{t}
\\
\asymp v(t,\omega)^{\frac{N}{N-2s}}
&\asymp v(t,\omega)^{\frac{N}{N-2s}}
\int_{t-1}^{t+1}\int_{\bS^{N-1}}
	e^{-(N-2s)(t-\bar{t})_+}
\,d\cH^{N-1}_{\bar{\omega}}\,d\bar{t}
\\
&
\asymp \int_{t-1}^{t+1}\int_{\bS^{N-1}}
	e^{-(N-2s)(t-\bar{t})_+}
	v(\bar{t},\bar{\omega})^{\frac{N}{N-2s}}
\,d\cH^{N-1}_{\bar{\omega}}\,d\bar{t}.
\end{align*}
This completes the proof.
\end{proof}

In the following, we are interested in spherical average
\[
\overline{v}(t)
:=\fint_{\bS^{N-1}}
	v(t,\omega)
\,d\cH^{N-1}_{\omega}.
\]
It is clear that $\overline{v}$ also satisfies Harnack's inequality. In fact,
\begin{equation}\label{eq:harnack-v-bar}
\sup_{[t-1,t+1]}\overline{v}
\leq \sup_{[t-1,t+1]\times\bS^{N-1}}v
\leq C\inf_{[t-1,t+1]\times\bS^{N-1}}v
\leq C\inf_{[t-1,t+1]}\overline{v}.
\end{equation}
Moreover, we will show that $\overline{v}(t)$ satisfies an ordinary differential inequality and hence a sharp upper bound.

\begin{prop}\label{prop:upper-rough}
	For any $t\geq 8$ and $\omega\in\bS^{N-1}$,
	\[
	v(t,\omega)
	\leq Ct^{-\frac{N-2s}{2s}}.
	\]
Here $C>0$ depends only on $N$, $s$, $\norm[L^{\frac{N}{N-2s}}((0,\infty)\times\bS^{N-1})]{v}$ and $\norm[\tilde{L}^1_{2s}(\R^N\setminus B_1)]{g}$.
\end{prop}

\begin{proof}
Using \eqref{eq:v-3-1} and Jensen's inequality (or \eqref{eq:harnack-v-bar}), %
for any $t\geq 4$,
\begin{align*}
v(t,\omega)
&\geq
	c\int_{t}^{\infty}
		\fint_{\bS^{N-1}}
			v(\bar{t},\bar{\omega})^{\frac{N}{N-2s}}
		\,d\cH^{N-1}_{\bar{\omega}}
	\,d\bar{t}
	-Ce^{-\frac{N-2s}{2}t}\\
&\geq
	c\int_{t}^{\infty}
	\left (
		\fint_{\bS^{N-1}}
			v(\bar{t},\bar{\omega})
		\,d\cH^{N-1}_{\bar{\omega}}
	\right )^{\frac{N}{N-2s}}
	\,d\bar{t}
	-Ce^{-\frac{N-2s}{2}t}.
\end{align*}
Averaging over $\bS^{N-1}$, we have
\begin{equation}\label{eq:v-3-3}
\overline{v}(t)
\geq c\int_{t}^{\infty}
		\overline{v}(\bar{t})^{\frac{N}{N-2s}}
	\,d\bar{t}
	-Ce^{-\frac{N-2s}{2}t},
		\qquad \text{ for } t\geq 4.
\end{equation}
By \Cref{lem:harnack-t}, it suffices to show that
\[
\limsup_{t\to+\infty}
	t^{\frac{N-2s}{2s}}
	\overline{v}(t)
\leq C<+\infty,
\]
Suppose on the contrary that for some large $M>0$ to be chosen later,
\begin{equation}\label{eq:v-tk-new}
\overline{v}(t_k)
\geq M t_k^{-\frac{N-2s}{2s}},
	\qquad \text{ along some } t_k\to+\infty.
\end{equation}
For $t\in[\frac{t_k}{4},\frac{t_k}{2}]$, we absorb the exponential error using \eqref{eq:harnack-v-bar} as follows:
\[
\begin{split}
C e^{-\frac{N-2s}{2}t}
&\leq Ce^{-\frac{N-2s}{8}t_k}
\leq C %
	t_k^{-\frac{N}{2s}}
\leq CM^{-\frac{N}{N-2s}} %
	\overline{v}(t_k)^{\frac{N}{N-2s}}
=CM^{-\frac{N}{N-2s}} 
	\int_{t_k}^{t_k+1}
		\overline{v}(t_k)^{\frac{N}{N-2s}}
	\,d\bar{t}
	\\
&
\leq
	CM^{-\frac{N}{N-2s}}
	\int_{t_k}^{t_k+1}
		\overline{v}(\bar{t})^{\frac{N}{N-2s}}
	\,d\bar{t}
\leq CM^{-\frac{N}{N-2s}}\int_{t}^{\infty}
	\overline{v}(\bar{t})^{\frac{N}{N-2s}}
\,d\bar{t}.
\end{split}
\]
Thus
\[
\overline{v}(t)
\geq (c-CM^{-\frac{N}{N-2s}})
\int_t^{\infty}
	\overline{v}(\bar{t})^{\frac{N}{N-2s}}
\,d\bar{t},
	\qquad \text{ for } t\in[\tfrac{t_k}{4},\tfrac{t_k}{2}].
\]
For $M$ large enough, we see that 
\begin{equation*}
\cV(t)
:=\int_t^{\infty}
	\overline{v}(\bar{t})^{\frac{N}{N-2s}}
\,d\bar{t}> 0
\end{equation*}
satisfies an ordinary differential inequality
\[
[-\cV'(t)]^{\frac{N-2s}{N}}
\geq c \cV(t),
	\qquad \text{ for } t\in[\tfrac{t_k}{4},\tfrac{t_k}{2}],
\]
which yields
\[
\cV\left (
	\tfrac{t_k}{2}
\right)^{-\frac{2s}{N-2s}}
\geq
\cV\left (
	\tfrac{t_k}{2}
\right)^{-\frac{2s}{N-2s}}
-\cV\left (
	\tfrac{t_k}{4}
\right )^{-\frac{2s}{N-2s}}
\geq ct_k.
\]
This implies, using \eqref{eq:v-3-2} and \eqref{eq:harnack-v-bar},
\begin{align*}
v(t_k,\omega)
&\leq
	C\int_{t_k/2}^{\infty}
		\int_{\bS^{N-1}}
			(Cv(\bar{t}))^{\frac{N}{N-2s}}
		\,d\cH^{N-1}_{\bar{\omega}}
	\,d\bar{t}
	+Ce^{-\frac{N-2s}{2}t_k}
\leq
	C\cV\left (
		\tfrac{t_k}{2}
	\right )
	+Ce^{-\frac{N-2s}{2}t_k}
\leq
	Ct_k^{-\frac{N-2s}{2s}},
\end{align*}
for any $\omega\in\bS^{N-1}$. Averaging over $\omega\in\bS^{N-1}$ yields a contradiction to \eqref{eq:v-tk-new}, as $t_k\to+\infty$, provided $M$ is large enough. Using \eqref{eq:harnack-v-bar} again, the desired estimate follows.
\end{proof}

\subsection{Precise asymptotic equation}

The kernel in \eqref{eq:cK} is asymptotically constant as $t-\bar{t}\to-\infty$:

\begin{lem}
	Let $M\geq 1$ be fixed large. For any $\bar{t}\geq t+M\log t$, $t\geq 8$ and $\omega,\bar{\omega}\in\bS^{N-1}$,
	\[
	|\cK(t-\bar{t},\omega\cdot\bar{\omega})
	-C_{N,-s}|
	\leq Ct^{-M},
	\]
for $C>0$ depending only on $N$ and $s$.
\end{lem}

\begin{lem}\label{lem:eq-v-4-bar}
Write $\kappa=C_{N,-s}\cH^{N-1}(\bS^{N-1})=2^{1-2s}\frac{\Gamma(\frac{N-2s}{2})}{\Gamma(\frac{N}{2})\Gamma(s)}$. For any $t\geq 8$ and $\omega\in\bS^{N-1}$,
\begin{equation}\label{eq:v-4}
\begin{split}
v(t,\omega)
=\kappa
	\int_{t}^{\infty}
		\overline{v}(\bar{t})^{\frac{N}{N-2s}}
	\,d\bar{t}
	+O\left(
		t^{-\frac{N}{2s}}
		\log t
	\right).
\end{split}\end{equation}
Consequently, the spherical average $\overline{v}$ satisfies
\begin{equation}\label{eq:v-4-bar}
\begin{split}
\overline{v}(t)
=\kappa
	\int_{t}^{\infty}
		\overline{v}(\bar{t})^{\frac{N}{N-2s}}
	\,d\bar{t}
	+O\left(
		t^{-\frac{N}{2s}}
		\log t
	\right).
\end{split}\end{equation}
Here the constant in the error depends only on $N$, $s$, $\norm[L^{\frac{N}{N-2s}}((0,\infty)\times\bS^{N-1})]{v}$ and $\norm[\tilde{L}^1_{2s}(\R^N\setminus B_1)]{g}$.
\end{lem}

\begin{proof}
Let $M>0$ be a constant to be chosen. For $t\geq 8$ and $\omega\in\bS^{N-1}$, we split \eqref{eq:v-2} as
\begin{equation}\label{eq:v-2-bar}
\begin{split}
v(t,\omega)
&=C_{N,-s}\int_{t}^{\infty}\int_{\bS^{N-1}}
	v(\bar{t},\bar{\omega})^{\frac{N}{N-2s}}
\,d\cH^{N-1}_{\bar{\omega}}\,d\bar{t}\\
&\quad\;+\int_{t/2}^{t-M\log t}\int_{\bS^{N-1}}
	\cK(t-\bar{t},\omega\cdot\bar{\omega})
	v(\bar{t},\bar{\omega})^{\frac{N}{N-2s}}
\,d\cH^{N-1}_{\bar{\omega}}\,d\bar{t}\\
&\quad\;+\int_{t-M\log t}^{t}\int_{\bS^{N-1}}
	\cK(t-\bar{t},\omega\cdot\bar{\omega})
	v(\bar{t},\bar{\omega})^{\frac{N}{N-2s}}
\,d\cH^{N-1}_{\bar{\omega}}\,d\bar{t}\\
&\quad\;+\int_{t}^{t+M\log t}\int_{\bS^{N-1}}
	[\cK(t-\bar{t},\omega\cdot\bar{\omega})
	-C_{N,-s}]
	v(\bar{t},\bar{\omega})^{\frac{N}{N-2s}}
\,d\cH^{N-1}_{\bar{\omega}}\,d\bar{t}\\
&\quad\;+\int_{t+M\log t}^{\infty}\int_{\bS^{N-1}}
	[\cK(t-\bar{t},\omega\cdot\bar{\omega})
	-C_{N,-s}]
	v(\bar{t},\bar{\omega})^{\frac{N}{N-2s}}
\,d\cH^{N-1}_{\bar{\omega}}\,d\bar{t}
+O(e^{-\frac{N-2s}{2}t})\\
&=:I_1+I_2+I_3+I_4+I_5+O(e^{-\frac{N-2s}{2}t}).
\end{split}\end{equation}
The dominant term is $I_1$. Using \eqref{eq:cK}, \eqref{eq:cK-int}, \eqref{eq:harnack-v-bar} and \Cref{prop:upper-rough}, we estimate the errors one by one.
\begin{align*}
|I_2|
&\leq \int_{t/2}^{t-M\log t}\int_{\bS^{N-1}}
	Ce^{-(N-2s)M\log t}
	v(\bar{t},\bar{\omega})^{\frac{N}{N-2s}}
\,d\cH^{N-1}_{\bar{\omega}}\,d\bar{t}
\leq Ct^{-(N-2s)M},\\
|I_3|+|I_4|
&\leq \int_{t-M\log t}^{t+M\log t}\int_{\bS^{N-1}}
	C
	(Ct^{-\frac{N-2s}{2s}})^{\frac{N}{N-2s}}
\,d\cH^{N-1}_{\bar{\omega}}\,d\bar{t}
\leq CMt^{-\frac{N}{2s}}\log t,\\
|I_5|
&\leq \int_{t+M\log t}^{\infty}\int_{\bS^{N-1}}
	Ct^{-M}
	v(\bar{t},\bar{\omega})^{\frac{N}{N-2s}}
\,d\cH^{N-1}_{\bar{\omega}}\,d\bar{t}
\leq Ct^{-M}.
\end{align*}
Therefore, by fixing $M=\frac{N}{2s(N-2s)}+\frac{N}{2s}+1$, \eqref{eq:v-2-bar} becomes 
\begin{equation}\label{eq:v-3-bar}
v(t,\omega)
=C_{N,-s}\int_{t}^{\infty}\int_{\bS^{N-1}}
	v(\bar{t},\bar{\omega})^{\frac{N}{N-2s}}
\,d\cH^{N-1}_{\bar{\omega}}\,d\bar{t}
+O(t^{-\frac{N}{2s}}\log t),
	\quad \text{ for } t\geq 8,\,\omega\in\bS^{N-1}.
\end{equation}
We observe that by iterating \eqref{eq:v-3-bar} itself, the spherical mean and the power function on the right hand side of \eqref{eq:v-3-bar} almost commute. Indeed, let us denote
\[
\bV(t)
:=C_{N,-s}\int_{t}^{\infty}\int_{\bS^{N-1}}
	v(\bar{t},\bar{\omega})^{\frac{N}{N-2s}}
\,d\cH^{N-1}_{\bar{\omega}}\,d\bar{t}
\leq Ct^{-\frac{N-2s}{2s}},
\]
\[
\cR(t,\omega)
:=v(t,\omega)-\bV(t)
=O(t^{-\frac{N}{2s}}\log t),
	\qquad
\overline{\cR}(t)
:=\fint_{\bS^{N-1}}
	\cR(t,\omega)
\,d\cH^{N-1}_{\omega}
=O(t^{-\frac{N}{2s}}\log t),
\]
and compute, for $t\geq 8$, using \eqref{eq:v-3-bar} and its spherical average on both sides,
\begin{align*}
0\leq
&\fint_{\bS^{N-1}}
	v(t,\omega)^{\frac{N}{N-2s}}
\,d\cH^{N-1}_{\omega}
-\overline{v}(t)^{\frac{N}{N-2s}}\\
&=
\fint_{\bS^{N-1}}
	\left (
		\bV(t)
		+\cR(t,\omega)
	\right )^{\frac{N}{N-2s}}
\,d\cH^{N-1}_{\omega}
-\left (
	\bV(t)
	+\overline{\cR}(t)
\right )^{\frac{N}{N-2s}}\\
&=\fint_{\bS^{N-1}}
	\frac{N}{N-2s}
	\int_{0}^{1}
		\left (
			\bV(t)
			+\overline{\cR}(t)
			+\tau(\cR(t,\omega)-\overline{\cR}(t))
		\right )^{\frac{2s}{N-2s}}
	\,d\tau
	\cdot
	(\cR(t,\omega)-\overline{\cR}(t))
\,d\cH^{N-1}_{\omega}\\
&\leq
	Ct^{-\frac{N}{2s}-1}\log t.
\end{align*}
Here we have used Jensen's inequality again. %
Therefore, \eqref{eq:v-4-bar} follows.
\end{proof}

	\subsection{Exact singular behavior}
	
	\begin{prop}\label{prop:lower-rough}
		Suppose
		\begin{equation}\label{eq:limsup-ass}
			\ell
			=\limsup_{t\to+\infty}
			t^{\frac{N-2s}{2s}}
			\overline{v}(t)
			>0.
		\end{equation}
		Then for any $\omega\in\bS^{N-1}$,
		\begin{equation}\label{eq:v-exact}
		v(t,\omega)
		=c_0 t^{-\frac{N-2s}{2s}}
		\big(
			1+O(t^{-1}(\log t)^2)
		\big),
			\qquad \text{ as } t\to+\infty,
		\end{equation}
where, for $\kappa$ is given in \Cref{lem:eq-v-4-bar},
\begin{equation}\label{eq:c0-explicit}
c_0=\Bigl(\frac{N-2s}{2s\kappa}\Bigr)^{\frac{N-2s}{2s}}.
\end{equation}
The constant in the error depends only on $N$, $s$, $\norm[L^{\frac{N}{N-2s}}((0,\infty)\times\bS^{N-1})]{v}$ and $\norm[\tilde{L}^1_{2s}(\R^N\setminus B_1)]{g}$. %
\end{prop}

We remark that as $s\to 1^-$, $c_0\to \left(\frac{N-2}{\sqrt{2}}\right)^{N-2}$, recovering the constant in \cite{Veron1981}.
	
\begin{proof}
We carry out a continuation argument. Let $M>0$ be fixed large enough (with respect to the constant of the error term in \eqref{eq:v-4-bar}) and define
\[
\bT=\set{t\geq 8:
	\cV(t)
	\geq M
		t^{-\frac{N}{2s}}\log t
}.
\]
Note that \eqref{eq:limsup-ass} and \eqref{eq:v-4-bar} imply that $\bT\neq\varnothing$ for any choice of $M$. The main ingredient is to show that if $T\in \bT$ with $T\gg 1$ (depending on $M$), then $[T,T+1]\subset \bT$. 

{\medskip\noindent\bf Step 1: Local lower bound on $\cV(t)$.} Assuming $T\in\bT$, we claim that
\begin{equation}\label{eq:cV-lower}
\cV(t)\geq C^{-1}Mt^{-\frac{N}{2s}}\log t,
	\qquad \forall t\in[T,T+1].
\end{equation}
By \eqref{eq:harnack-v-bar} and \eqref{eq:v-4-bar}, for $|t-T|\leq 1$,
\[
\kappa \cV(T)-C_1T^{-\frac{N}{2s}}\log T
\leq \overline{v}(T)
\leq C\overline{v}(t)
\leq C\cV(t)+Ct^{-\frac{N}{2s}}\log t
\leq C\cV(t)+C_1T^{-\frac{N}{2s}}\log T.
\]
Hence, the hypothesis $T\in\bT$ and the choice $\frac{2C_1}{M}\leq \frac{\kappa}{2}$ imply
\[
\cV(t) 
\geq C^{-1}\cV(T) 
\geq C^{-1} M T^{-\frac{N}{2s}}\log T
\geq C^{-1} M t^{-\frac{N}{2s}}\log t,
	\quad \text{ for } t\in[T,T+1].
\]

{\medskip\noindent\bf Step 2: Solving the differential inequality locally.} 
Using \eqref{eq:cV-lower} on \eqref{eq:v-4-bar}, 
\begin{equation*}%
[-\cV'(t)]^{\frac{N-2s}{N}}
\leq \kappa \cV(t)
	+C t^{-\frac{N}{2s}}\log t
\leq C\cV(t),
	\qquad \text{ for } t\in[T,T+1].
\end{equation*}
This implies
\begin{equation}\label{eq:cV-ineq-sol}
\big[\cV(t)^{-\frac{2s}{N-2s}}\big]'
\leq C_2,
	\qquad \text{ for } t\in[T,T+1].
\end{equation}
Recall that $t\in \bT$ if and only if
\begin{equation}\label{eq:bT}
\cV(t)^{-\frac{2s}{N-2s}} \leq M^{-\frac{2s}{N-2s}} t^{\frac{N}{N-2s}} (\log t)^{-\frac{2s}{N-2s}}.
\end{equation}
Now \eqref{eq:cV-ineq-sol} and \eqref{eq:bT} implies that for $t\in[T,T+1]$,
\begin{align*}
\cV(t)^{-\frac{2s}{N-2s}}
&\leq \cV(T)^{-\frac{2s}{N-2s}}
	+C_2(t-T)
\leq 
	M^{-\frac{2s}{N-2s}} 
	T^{\frac{N}{N-2s}} 
	(\log T)^{-\frac{2s}{N-2s}}
	+C_2(t-T).
\end{align*}
Therefore, by the convexity of the function $t^{\frac{N}{N-2s}}(\log t)^{-\frac{2s}{N-2s}}$, if $T\geq T_0(C_2,M)$, then \eqref{eq:bT} holds for $t\in[T,T+1]$, that is, $[T,T+1]\subset \bT$.

{\medskip\noindent\bf Step 3: Continuation.} 
By \eqref{eq:limsup-ass}, there exists $T_1(N,s,M,\ell) \in \bT$ with $T_1 \geq T_0$. Using {\bf Step 2} inductively,
\[
[T_1,+\infty)\subset \bT.
\]

{\medskip\noindent\bf Step 4: Exact behavior.} 
Finally, using \eqref{eq:cV-ineq-sol} on $[T_1,+\infty)$ yields that for $t\geq T_2:=\cV(T_1)^{-\frac{2s}{N-2s}}$,
\begin{align*}
\cV(t)^{-\frac{2s}{N-2s}}
\leq \cV(T_1)^{-\frac{2s}{N-2s}}+C_2(t-T_1)
\leq Ct,
\end{align*}
reducing the ODE \eqref{eq:v-4-bar} to
\[
[-\cV'(t)]^{\frac{N-2s}{N}}
=\kappa \cV(t)
	\big(1+O(\cV(t)^{-1}t^{-\frac{N}{2s}}\log t)\big)
=\kappa \cV(t)
	\big(1+O(t^{-1}\log t)\big),
\qquad \text{ for } t\geq T_2.
\]
Rearranging,
\[
[\cV(t)^{-\frac{2s}{N-2s}}]'
=\frac{2s}{N-2s}\kappa^{\frac{N}{N-2s}}
+O(t^{-1}\log t),
	\qquad \text{ for } t\geq T_2.
\]
\[
\cV(t)^{-\frac{2s}{N-2s}}
=\cV(T_2)^{-\frac{2s}{N-2s}}
+\frac{2s}{N-2s}\kappa^{\frac{N}{N-2s}}t
+O\big(
	(\log t)^2-(\log T_2)^2
\big),
	\qquad \text{ for } t\geq T_2.
\]
For $t\geq T_3=T_3(T_2)$ big enough we can absorb the constants and arrive at
\[
\cV(t)
=\left (\frac{N-2s}{2s}\right )^{\frac{N-2s}{2s}}
\kappa^{-\frac{N}{2s}}
t^{-\frac{N-2s}{2s}}
\bigl(
	1+O(t^{-1}(\log t)^2)
\bigr),
	\qquad \text{ for } t\geq T_3.
\]
Now \eqref{eq:v-exact} follows from \eqref{eq:v-4}, as desired.
\end{proof}

\subsection{Removability of singularity}

\begin{lem}
If $u$ solves \eqref{eq:main} and
\begin{equation*}
	\lim_{r\to0}
	r^{N-2s}
	\left(
	\log\frac1r
	\right)^{\frac{N-2s}{2s}}
	u(r)
	=0,
\end{equation*}
then the singularity of $u$ at $0$ is removable.
\end{lem}

\begin{proof}
By assumption, we can view the nonlinearity as a linear term with a small singular potential $u^{\frac{2s}{N-2s}}=\frac{o(1)}{r^{2s}\log\frac1r}$. Now a standard comparison argument applies, using a slightly negative power (plus an infinitesimal but more singular term) as barrier. This shows that $u\in L^p$ for $p>\frac{N}{N-2s}$, hence bounded. See, e.g. \cite[Proposition 2.6]{JLX}.
\end{proof}

\subsection{Second order expansion}

We study the second order asymptotics in the case of singularity.

\begin{prop}
If $v(t,\omega)$ satisfies \eqref{eq:v-2} and
\begin{equation}\label{eq:exp-1st}
v(t,\omega)=c_0t^{-\frac{N-2s}{2s}}\big(1+O(t^{-1}(\log t)^2)\big) 
	\qquad \text{ as } t\to+\infty,
\end{equation}
then there exists $c_1<0$ given in \Cref{rmk:c1} such that
\[
v(t,\omega)
=c_0 t^{-\frac{N-2s}{2s}}
	+c_1 t^{-\frac{N}{2s}}\log t
	+O(t^{-\frac{N}{2s}}),
		\qquad \text{ as } t\to+\infty.
\]
\end{prop}

\begin{proof}
The second order expansion is given by the linearized equation around $c_0t^{-\frac{N-2s}{2s}}$. Moreover, in view of \eqref{eq:exp-1st}, the angular variable does not play a role. Thus, one may take the spherical average to reduce to the case of radial symmetry. Indeed here we assume it for notational simplicity.

Recall that $v(t)$ satisfies the equation
\[
v(t)
=\int_{\frac{t}{2}}^{\infty}
	K(t-\bar{t})v(\bar{t})^{\frac{N}{N-2s}}
\,d\bar{t}
+O(e^{-\frac{N-2s}{2}t}),
\]
as $t\to+\infty$, where
\[\begin{split}
K(t-\bar{t})
&=C_1\int_{0}^{\pi}
	\frac{
		\sin^{N-2}\theta
	}{
		(1+e^{2(t-\bar{t})}-2e^{t-\bar{t}}\cos\theta)^{\frac{N-2s}{2}}
	}
\,d\theta,
\end{split}\]
for an explicit constant $C_1=C_1(N,s)=C_{N,-s}\cH^{N-2}(\bS^{N-2})=2^{1-2s}\pi^{-\frac12}\frac{\Gamma(\frac{N-2s}{2})}{\Gamma(s)\Gamma(\frac{N-1}{2})}$. We keep in mind that for a fixed $s\in(0,1)$,
\begin{equation}\label{eq:K-behavior}
\begin{split}
K(t-\bar{t})
&=C_1\int_{0}^{\pi}
	\frac{
		\sin^{N-2}\theta
	}{
		(1+e^{2(t-\bar{t})}-2e^{t-\bar{t}}\cos\theta)^{\frac{N-2s}{2}}
	}
\,d\theta
\asymp
\begin{cases}
1,
	& s\in(\tfrac12,1),\\
\log|t-\bar{t}|,
	& s=\tfrac12,\\
|t-\bar{t}|^{-(1-2s)},
	& s\in(0,\tfrac12),
\end{cases}
\end{split}
\end{equation}
as $t-\bar{t}\to 0$. In particular, $K$ is locally integrable. Let $M=M(N,s)$ be fixed large and $\ell=M\log t$, so that $e^{-\ell}=t^{-M}$. Keeping \eqref{eq:exp-1st} in mind, we split the integral as follows:
\begin{align*}
\int_{\frac{t}{2}}^{t-\frac{\ell}{N-2s}}
	K(t-\bar{t})v(\bar{t})^{\frac{N}{N-2s}}
\,d\bar{t}
=\left(
\int_{\frac{t}{2}}^{t-\frac{\ell}{N-2s}}
+\int_{t-\frac{\ell}{N-2s}}^{t+\ell}
+\int_{t+\ell}^{\infty}
\right)
	K(t-\bar{t})v(\bar{t})^{\frac{N}{N-2s}}
\,d\bar{t}
=:I_1+I_2+I_3.
\end{align*}

For $I_1$, using $K(t-\bar{t})=O(e^{-(N-2s)(t-\bar{t})})$ as $t-\bar{t}\to+\infty$,
\begin{align*}
I_1
&=\int_{\frac{t}{2}}^{t-\frac{\ell}{N-2s}}
	 O(t^{-M})
	 O(t^{-\frac{N}{2s}})
\,d\bar{t}
=O(t^{-M-\frac{N-2s}{2s}})
=O(t^{-\frac{N}{2s}-1}),
\end{align*}
since $M\geq 2$.

For $I_3$, using $K(t-\bar{t})= C_2+O(e^{t-\bar{t}})$ as $t-\bar{t}\to-\infty$ where $C_2=C_2(N,s)=C_1\int_{0}^{\pi}\sin^{N-2}\theta\,d\theta=2^{1-2s}|\Beta(\frac{N}{2},-s)|^{-1}$, and the fact that
\begin{align*}
\int_{t}^{t+\ell}
	\bar{t}^{-\frac{N}{2s}}
\,d\bar{t}
&=t^{-\frac{N}{2s}}
\int_{0}^{\ell}
	\Big(
		1+\frac{\tau}{t}
	\Big)^{-\frac{N}{2s}}
\,d\tau
=t^{-\frac{N}{2s}}\ell
+O\big(t^{-\frac{N}{2s}-1}(\log t)^2\big),
\end{align*}
we have
\begin{align*}
I_3
&=\int_{t+\ell}^{\infty}
	 \Big(
	 	C_2+O(t^{-M})
	 \Big)
	 v(\bar{t})^{\frac{N}{N-2s}}
\,d\bar{t}\\
&=C_2\int_{t}^{\infty}
	 v(\bar{t})^{\frac{N}{N-2s}}
\,d\bar{t}
-C_2\int_{t}^{t+\ell}
	 \Big(
		 c_0^{\frac{N}{N-2s}}
		 \bar{t}^{-\frac{N}{2s}}
		 +O\big(t^{-\frac{N}{2s}-1}(\log t)^2\big)
	 \Big)
\,d\bar{t}
+O(t^{-M-\frac{N-2s}{2s}})\\
&=C_2\int_{t}^{\infty}
	 v(\bar{t})^{\frac{N}{N-2s}}
\,d\bar{t}
-C_2c_0^{\frac{N}{N-2s}}t^{-\frac{N}{2s}}\ell
+O\big(t^{-\frac{N}{2s}-1}(\log t)^3\big),
\end{align*}
again using $M\geq 2$.

For $I_2$, whose kernel $K(t-\bar{t})$ can possibly be singular, we expand $v(\bar{t})$:
\begin{align*}
I_2
&=\int_{t-\frac{\ell}{N-2s}}^{t+\ell}
	K(t-\bar{t})
	v(\bar{t})^{\frac{N}{N-2s}}
\,d\bar{t}
=\int_{t-\frac{\ell}{N-2s}}^{t+\ell}
	K(t-\bar{t})
	 \Big(
		 c_0^{\frac{N}{N-2s}}
		 \bar{t}^{-\frac{N}{2s}}
		 +O\big(t^{-\frac{N}{2s}-1}(\log t)^2\big)
	 \Big)
\,d\bar{t}\\
&=c_0^{\frac{N}{N-2s}}
\int_{t-\frac{\ell}{N-2s}}^{t+\ell}
	K(t-\bar{t})
	\bar{t}^{-\frac{N}{2s}}
\,d\bar{t}
+O\big(t^{-\frac{N}{2s}-1}(\log t)^3\big).
\end{align*}
Here the first integral can be expanded as follows:
\begin{align*}
\int_{t-\frac{\ell}{N-2s}}^{t+\ell}
	K(t-\bar{t})
	\bar{t}^{-\frac{N}{2s}}
\,d\bar{t}
&=\int_{-\frac{\ell}{N-2s}}^{\ell}
	K(-\tau)
	(t+\tau)^{-\frac{N}{2s}}
\,d\tau
=t^{-\frac{N}{2s}}
\Big(
	1+O(t^{-1}\log t)
\Big)
\int_{-\frac{\ell}{N-2s}}^{\ell}
	K(-\tau)
\,d\tau.
\end{align*}
One can show that the integral kernel blows up exactly like $\ell$, upon changing variable to $\sigma=e^{-\tau}$ and integrating by parts using \eqref{eq:K-behavior}:
\begin{align*}
&
\int_{-\frac{\ell}{N-2s}}^{\ell}
	K(-\tau)
\,d\tau
=\int_{e^{-\ell}}^{e^{\frac{\ell}{N-2s}}}
	K(\log \sigma)
\,\frac{d\sigma}{\sigma}\\
&\;=C_1
\left(
	\int_{e^{-\ell}}^{1}
	+\int_{1}^{\infty}
	-\int_{e^{\frac{\ell}{N-2s}}}^{\infty}
\right)
\int_{0}^{\pi}
	\frac{
		\sin^{N-2}\theta
	}{
		(1+\sigma^2-2\sigma\cos\theta)^{\frac{N-2s}{2}}
	}
\,d\theta
\,\frac{d\sigma}{\sigma}\\
&\;=C_1\int_{e^{-\ell}}^{1}
\int_{0}^{\pi}
	\frac{
		\sin^{N-2}\theta
	}{
		(1+\sigma^2-2\sigma\cos\theta)^{\frac{N-2s}{2}}
	}
\,d\theta
\,d(\log\sigma)
+C_1\int_{1}^{\infty}
\int_{0}^{\pi}
	\frac{
		\sin^{N-2}\theta
	}{
		(1+\sigma^2-2\sigma\cos\theta)^{\frac{N-2s}{2}}
	}
\,d\theta
\,\frac{d\sigma}{\sigma}\\
&\qquad
-O\left(
\int_{e^{\frac{\ell}{N-2s}}}^{\infty}
\int_{0}^{\pi}
	\frac{
		\sin^{N-2}\theta
	}{
		\sigma^{N-2s+1}
	}
\,d\theta
\,d\sigma
\right)\\
&\;=-C_1\int_{0}^{\pi}
	\frac{
		\sin^{N-2}\theta
	}{
		(1+e^{-2\ell}-2e^{-\ell}\cos\theta)^{\frac{N-2s}{2}}
	}
\,d\theta
\,(-\ell)
+C_1\int_{1}^{\infty}
\int_{0}^{\pi}
	\frac{
		\sin^{N-2}\theta
	}{
		(1+\sigma^2-2\sigma\cos\theta)^{\frac{N-2s}{2}}
	}
\,d\theta
\,\frac{d\sigma}{\sigma}
+O(e^{-\ell})\\
&\;=C_2\ell+C_3
+O(t^{-M}),
\end{align*}
where we take
\begin{equation}\label{eq:C3}
C_3=C_1\int_{1}^{\infty}
\int_{0}^{\pi}
	\frac{
		\sin^{N-2}\theta
	}{
		(1+\sigma^2-2\sigma\cos\theta)^{\frac{N-2s}{2}}
	}
\,d\theta
\,\frac{d\sigma}{\sigma}.
\end{equation}
We conclude that
\begin{align*}
v(t)
&=C_2\int_{t}^{\infty}
	 v(\bar{t})^{\frac{N}{N-2s}}
\,d\bar{t}
-C_2c_0^{\frac{N}{N-2s}}t^{-\frac{N}{2s}}\ell
+O\big(t^{-\frac{N}{2s}-1}(\log t)^3\big)
+c_0^{\frac{N}{N-2s}}t^{-\frac{N}{2s}}
	(C_2\ell+C_3+O(t^{-M})).
\end{align*}
Since the terms of order $t^{-\frac{N}{2s}}\ell$ cancel, by denoting $C_4=C_3c_0^{\frac{N}{N-2s}}$, the equation simplifies to
\begin{align*}
v(t)
&=C_2\int_{t}^{\infty}
	 v(\bar{t})^{\frac{N}{N-2s}}
\,d\bar{t}
+C_4t^{-\frac{N}{2s}}
+O\big(t^{-\frac{N}{2s}-1}(\log t)^3\big).
\end{align*}
Let $\cV(t)$, $\phi(t)$ and $\Phi(t)$ be such that
\begin{align*}
v(t)&=c_0t^{-\frac{N-2s}{2s}}+\phi(t),\\
\cV(t)&=\int_{t}^{\infty}v(\bar{t})^{\frac{N}{N-2s}}\,d\bar{t},\\
\Phi(t)&=\int_{t}^{\infty}\bar{t}^{-1}\phi(\bar{t})\,d\bar{t},
\end{align*}
so that
\begin{align*}
\cV(t)
&=\frac{2s}{N-2s}c_0^{\frac{N}{N-2s}}t^{-\frac{N-2s}{2s}}
	+\frac{N}{N-2s}c_0^{\frac{2s}{N-2s}}\Phi(t)
	+O\big(t^{-\frac{N}{2s}-2}(\log t)^4\big).
\end{align*}
The linearized equation on $\Phi(t)$ is given by
\begin{align*}
c_0t^{-\frac{N-2s}{2s}}-t\Phi'(t)
&=C_2\left(
	\frac{2s}{N-2s}c_0^{\frac{N}{N-2s}}t^{-\frac{N-2s}{2s}}
	+\frac{N}{N-2s}c_0^{\frac{2s}{N-2s}}\Phi(t)
\right)
+C_4t^{-\frac{N}{2s}}
+O\big(t^{-\frac{N}{2s}-1}(\log t)^4\big).
\end{align*} 
By the choice of $c_0$ that $C_2c_0^{\frac{2s}{N-2s}}=\frac{N-2s}{2s}$, the equation simplifies to
\begin{align*}
t\Phi'(t)+\frac{N}{2s}\Phi(t)
&=-C_4t^{-\frac{N}{2s}}
+O\big(t^{-\frac{N}{2s}-1}(\log t)^4\big).
\end{align*}
Thus
\begin{equation}
\label{eq:phi-ODE}
(t^{\frac{N}{2s}}\Phi(t))'
=-C_4t^{-1}
+O\big(t^{-2}(\log t)^4\big),
\end{equation}
which gives precise second order behavior:
\begin{align*}
\Phi(t)&=-C_4 t^{-\frac{N}{2s}}\log t+O(t^{-\frac{N}{2s}}),\\
\phi(t)&=-t\Phi'(t)=\frac{N}{2s}\Phi(t)+C_4t^{-\frac{N}{2s}}
	=-C_5t^{-\frac{N}{2s}}\log t+O(t^{-\frac{N}{2s}}),
\end{align*}
where $C_5=\frac{N}{2s}C_4$, as desired.
\end{proof}

\begin{remark}\label{rmk:c1}
We have $c_1=-C_5$ with
\begin{align*}
C_5&=\frac{N}{2s}C_4
=\frac{N}{2s}C_3c_0^{\frac{N}{N-2s}}
=\frac{N}{2s}C_1
\left(
\int_{1}^{\infty}
\int_{0}^{\pi}
	\frac{
		\sin^{N-2}\theta
	}{
		(1+\sigma^2-2\sigma\cos\theta)^{\frac{N-2s}{2}}
	}
\,d\theta
\,\frac{d\sigma}{\sigma}
\right)
\left(
	\frac{N-2s}{2s}C_2^{-1}
\right)^{\frac{N}{2s}}\\
&=\frac{N}{2s}
	\left(
		\frac{N-2s}{2s}
	\right)^{\frac{N}{2s}}
	\pi^{-\frac12}
	\frac{
		\Gamma(\frac{N}{2})
	}{
		\Gamma(\frac{N-1}{2})
	}
	\left(
		2^{1-2s}
		\frac{
			\Gamma(\frac{N-2s}{2})
		}{
			\Gamma(s)
			\Gamma(\frac{N}{2})	
		}
	\right)^{-\frac{N-2s}{2s}}
\int_{1}^{\infty}
\int_{0}^{\pi}
	\frac{
		\sin^{N-2}\theta
	}{
		(1+\sigma^2-2\sigma\cos\theta)^{\frac{N-2s}{2}}
	}
\,d\theta
\,\frac{d\sigma}{\sigma}.
\end{align*}
In the special case of $s=1$, we are able to use the quadratic transformation formula \cite[15.8.18]{DLMF} and Legendre duplication formula to compute
\begin{align*}
C_3(N,1)
&=2^{N-2}\Beta\left(
	\frac{N-1}{2},
	\frac{N-1}{2}
\right)C_1(N,1)
\int_{1}^{\infty}
	\Hyperg\left(
		\frac{N-2}{2},
		\frac{N-1}{2};
		N-1;
		-\frac{4\sigma}{(\sigma-1)^2}
	\right)
\,\frac{d\sigma}{\sigma(\sigma-1)^{N-2}}\\
&=2^{N-2}
	\frac{
		\Gamma(\frac{N-1}{2})^2
	}{
		\Gamma(N-1)
	}
	2^{-1}\pi^{-\frac12}
	\frac{
		\Gamma(\frac{N-2}{2})
	}{
		\Gamma(1)
		\Gamma(\frac{N-1}{2})
	}
\int_{1}^{\infty}
\Hyperg\left(
	N-2,N-1;N-1;\frac{1}{1-\sigma}
\right)
\,\frac{
	d\sigma
}{
	\sigma(\sigma-1)^{N-2}
}\\
&=2^{N-3}\pi^{-\frac12}
\frac{
	\Gamma(\frac{N-2}{2})
	\Gamma(\frac{N-1}{2})
}{
	\Gamma(N-1)
}
\int_{1}^{\infty}
\left(
	1-\frac{1}{1-\sigma}
\right)^{-(N-2)}
\,
	\frac{
		d\sigma
	}{
		\sigma(\sigma-1)^{N-2}
	}
\\
&=2^{N-3}\pi^{-\frac12}
\frac{
	2^{1-(N-2)}\pi^{\frac12}
	\Gamma(N-2)
}{
	\Gamma(N-1)
}
\int_{1}^{\infty}
\,\frac{d\sigma}{\sigma^{N-1}}
=\frac{1}{(N-2)^2},
\end{align*}
giving
\begin{align*}
C_5(N,1)
&=
\frac{N}{2}
C_3(N,1)
c_0(N,1)^{\frac{2}{N-2}}
c_0(N,1)
=\frac{N}{2}\frac{1}{(N-2)^2}
	\left(
		\frac{(N-2)^2}{2}
	\right)
	c_0(N,1)
=\frac{N}{4}c_0(N,1).
\end{align*}
This recovers the constant in \cite[Lemme 3.3]{Veron1981}. (Note the opposite sign: we study diffusion near the origin while V\'{e}ron \cite{Veron1981} studies absorption near infinity.)
\end{remark}

\section{Liouville theorems}
\label{sec:Liouville}

We will prove \Cref{thm:Liouville-int} and then \Cref{thm:Liouville-diff}. It is convenient to introduce the Kelvin transform of $u$, denoted by $v$ in this section, namely
\begin{equation*}
	v(x)=\frac{1}{|x|^{N-2s}}u\left(\frac{x}{|x|^2}\right),
	\qquad
	\Ds v(x)=\frac{1}{|x|^{N+2s}}u\left(
	\frac{x}{|x|^2}\right).
\end{equation*}
In particular, a (non-negative) solution $u$ of \eqref{eq:ext} solves
\begin{equation}\label{eq:ext-Kelvin}\begin{dcases}
		\Ds v=|x|^{-2s}v^{\frac{N}{N-2s}}
		& \text{ in } B_1\setminus \set{0},\\
		v\geq 0
		& \text{ in } \R^N\setminus \set{0}.
\end{dcases}\end{equation}

\subsection{Integral formulation}
Under the assumptions of \Cref{thm:Liouville-int}, it is immediate to see that the Kelvin transform $v$ of $u$ satisfies the integral inequality 
\begin{equation}
	\label{eq:ext-Kelvin-int2}
	v(x)
	\geq
	\int_{B_1}
	\dfrac{
		|y|^{-2s}
		v(y)^{\frac{N}{N-2s}}
	}{
		|x-y|^{N-2s}
	}
	\,dy,
	\qquad \forall x\in B_1\setminus\set{0},
\end{equation}
and the Harnack inequality
\begin{equation}\label{eq:Harnack-Kelvin-statement}
	\sup_{B_{2r}\setminus B_{r/2}} v
	\leq C \inf_{B_{2r}\setminus B_{r/2}} v,
	\qquad \forall r\in(0,1/4).
\end{equation}

\begin{proof}[Proof of \Cref{thm:Liouville-int}]
Keeping \eqref{eq:ext-Kelvin-int2} in mind, we argue as follows.
	
	\medskip
	{\noindent \bf Step 1: Lower bound.}
	For any $x\in B_{1/4}\setminus\set{0}$,
	\[
	v(x)
	\geq
	\int_{B_1\setminus B_{1/2}}
	\dfrac{
		|y|^{-2s}v(y)^{\frac{N}{N-2s}}
	}{
		|x-y|^{N-2s}
	}
	\,dy
	\geq
	c\norm[L^{\frac{N}{N-2s}}(B_1\setminus B_{1/2})]{v}^{\frac{N}{N-2s}}.
	\]
	
	\medskip
	{\noindent \bf Step 2: Upper bound.} For any $x\in B_{1/4}\setminus\set{0}$, using %
	\eqref{eq:Harnack-Kelvin-statement}, %
	\[\begin{split}
		v(x)
		&\geq
		\int_{B_{|x|/2}(x)}
		\dfrac{
			|y|^{-2s}
			v(y)^{\frac{N}{N-2s}}
		}{
			|x-y|^{N-2s}
		}
		\,dy
		\geq
		c\int_{B_{|x|/2}(x)}
		\dfrac{
			|x|^{-2s}
			v(x)^{\frac{N}{N-2s}}
		}{
			|x|^{N-2s}
		}
		\,dy
		\geq
		cv(x)^{\frac{N}{N-2s}}.
	\end{split}\]
	Rearranging yields $v(x)\leq C$. Note that here the constants $C,c$ %
	depend (explicitly) only on $N$ and $s$.
	
	\medskip
	{\noindent \bf Step 3: Conclusion.} For any $x\in B_{1/16}\setminus\set{0}$, using %
	{\bf Step 1}, %
	\[\begin{split}
		v(x)
		&\geq
		\int_{B_{1/4} \setminus B_{2|x|}}
		\dfrac{
			|y|^{-2s}
			v(y)^{\frac{N}{N-2s}}
		}{
			|x-y|^{N-2s}
		}
		\,dy
		\geq
		c\norm[L^{\frac{N}{N-2s}}(B_1\setminus B_{1/2})]{
			v
		}^{(\frac{N}{N-2s})^2}
		\int_{B_{1/4}\setminus B_{2|x|}}
		\dfrac{
			|y|^{-2s}
		}{
			|y|^{N-2s}
		}
		\,dy\\
		&\geq
		c\norm[L^{\frac{N}{N-2s}}(B_1\setminus B_{1/2})]{
			v
		}^{(\frac{N}{N-2s})^2}
		\log\frac{1}{8|x|}.
	\end{split}\]
	Using {\bf Step 2},
	\[
	\norm[L^{\frac{N}{N-2s}}(B_1\setminus B_{1/2})]{
		v
	}
	\leq
	\left(
	\dfrac{C}{
		\log\frac{1}{8|x|}
	}
	\right)^{(\frac{N-2s}{N})^2}
	\to 0
	\qquad \textas |x|\to 0.
	\]
	We conclude that $v\equiv 0$ in $B_1\setminus B_{1/2}$, i.e. $u\equiv 0$ on $B_2\setminus B_1$. A similar argument on $v$ in $B_1\setminus B_r$ for each $r\in(0,1/2)$ shows that $u\equiv 0$ on $\R^N\setminus B_1$. %
\end{proof}

\subsection{Integro-differential formulation}

\begin{proof}[Proof of \Cref{thm:Liouville-diff}]
Solutions $u$ of \eqref{eq:ext} satisfy the Harnack inequality \eqref{eq:Harnack-statement} by a generalization of \cite[Proposition 3.1]{YZ1}. Now the Green-Poisson formulation for the Kelvin transform $v$ as in \eqref{eq:ext-Kelvin} satisfies
\[\begin{split}
v(x)
&=\int_{B_1}
	\bG_{B_1}(x,y)
	|y|^{-2s}v(y)^{\frac{N}{N-2s}}
\,dy
+\int_{\R^N\setminus B_1}
	\bP_{B_1}(x,y)
	v(y)
\,dy\\
&\geq
	c(N,s,R)
\int_{B_1}
\dfrac{
	|y|^{-2s}
	v(y)^{\frac{N}{N-2s}}
}{
	|x-y|^{N-2s}
}
\,dy,
	\qquad \text{ for } |x|<R<1.
\end{split}\]
This is \eqref{eq:ext-Kelvin-int2} in the interior up to a constant. Now the proof of \Cref{thm:Liouville-int} can be repeated to yield $v\equiv 0$ in $B_{R/2}$. But then the global maximum principle forces $v\equiv 0$ on $\R^N$.
\end{proof}

\section{Computations in radial coordinates}\label{sec:rad}
In this section we study the fractional Laplacian and Riesz potential in polar coordinates and obtain corresponding asymptotic expressions for (log-)polyhomogeneous functions.

\subsection{The fractional Laplacian and the Riesz potential}
Following the ideas of \cite{DPGW}, a conjugation of the fractional Laplacian can be rewritten as an integro-differential operator with a well-behaved convolution kernel. In fact, this also applies to the Riesz potential. For $N\geq 3$ and $s\in(0,1)$, define
\[
K_{N,s}(\rho)
=
C_{N,s}
\cH^{N-2}(\bS^{N-2})
\rho^{2s-1}
\int_{0}^{\pi}
	\dfrac{
		\sin^{N-2}\theta
	}{
		{(1+\rho^2-2\rho\cos\theta)^{\frac{N+2s}{2}}}
	}
\,d\theta,
\]
\[
K_{N,-s}(\rho)
=
C_{N,-s}
\cH^{N-2}(\bS^{N-2})
\rho^{-1}
\int_{0}^{\pi}
	\dfrac{
		\sin^{N-2}\theta
	}{
		{(1+\rho^2-2\rho\cos\theta)^{\frac{N-2s}{2}}}
	}
\,d\theta.
\]
Here $\cH^{d}(\Omega)$ denotes the $d$-dimensional Hausdorff measure of $\Omega$. The constants (for both $s\in(0,1)$ and $s\in(-1,0)$) are given by
\begin{equation}\label{eq:C-Ns}
C_{N,s}
=\dfrac{
    2^{2s}
    \Gamma(\frac{N+2s}{2})
}{
    |\Gamma(-s)|
    \pi^{\frac{N}{2}}
}
=2^{2s}\pi^{-\frac{N}{2}}
    \dfrac{
        \Gamma(\frac{N+2s}{2})
    }{
        \Gamma(2-s)
    }
    s(1-s),
\qquad
\cH^{N-2}(\bS^{N-2})
=\dfrac{
    2\pi^{\frac{N-1}{2}}
}{
    \Gamma(\frac{N-1}{2})
}.
\end{equation}

\begin{prop}\label{prop:K}
The following hold.%
\begin{enumerate}

\item (Conformal fractional Laplacian in polar coordinates) If $v\in C^{2}(\R^N\setminus\set{0})$ is radially symmetric, then
\[
r^N\Ds\left(\dfrac{v(r)}{r^{N-2s}}\right)
=
\PV\int_{0}^{\infty}
K_{N,s}(\rho)(v(r)-v(r\rho))
\,d\rho,
	\qquad \text{ for } r>0.
\]
Here the principal value integral is defined as
\[
\lim_{\eps\searrow0}
\int_{1+\eps}^{\infty}
K_{N,s}(\rho)
\left(
(v(r)-v(r\rho))
-\rho^{N-2s}
\left(
v\left(\dfrac{r}{\rho}\right)-v(r)
\right)
\right)
\,d\rho.
\]
In particular,
\begin{equation}\label{eq:PV-upper}\begin{split}
		\abs{
			r^N\Ds\left(
			\dfrac{v(r)}{r^{N-2s}}
			\right)
		}
		&\lesssim
		|v(r)|
		+r|v'(r)|
		+r^2\sup_{[\frac{r}{2},2r]}|v''|
		+\int_{0}^{\frac12}
		v(r\rho)
		\,d\rho
		+\int_{2}^{\infty}
		\dfrac{
			v(r\rho)
		}{
			\rho^{N+1}
		}
		\,d\rho.
\end{split}\end{equation}
The term $r^2|v''(r)|$ can be omitted when $s\in(0,\frac12)$.
\item (Riesz potential in polar coordinates)
If $f\in L^\infty_{\loc}(\R^N\setminus\set{0}) \cap \bigl( |x|^{-N} L^p(\R^N) \bigr)$ for some $p\in(1,\frac{N}{2s})$, then
\[
r^{N-2s}\Ints\left(
\dfrac{f(r)}{r^{N}}
\right)
=
\int_{0}^{\infty}
K_{N,-s}(\rho)
f(r\rho)
\,d\rho,
	\qquad \text{ for } r>0.
\]
Moreover, this gives the unique solution to
\[
\Ds\left(\dfrac{v(r)}{r^{N-2s}}\right)
=\dfrac{f(r)}{r^N}
\qquad \textae \text{ in } \R^N.
\]

\item (Exact representation) Let $s\in(0,1)\cup(-1,0)$. Then
\begin{equation}\label{eq:K-2F1}
	K_{N,s}(\rho)
	=
	\frac{
		2^{2s+1}\Gamma(\frac{N+2s}{2})
	}{
		|\Gamma(-s)|
		\Gamma(\frac{N}{2})
	}
	\frac{
		\rho^{(2s)_+-1}
	}{
		|\rho-1|^{N+2s}
	}
	\Hyperg\left (
		\frac{N+2s}{2},
		\frac{N-1}{2};
		N-1;
		-\frac{4\rho}{|\rho-1|^2}
	\right ).
\end{equation}
Here $\Hyperg$ denotes the Gaussian hypergeometric function (see \Cref{Ap:hyper}).

\item (Asymptotic behaviors) Let $s\in(0,1)$.
Then we can assert that, up to a constant,
\begin{equation}\label{eq:K-asymp-1}
K_{N,s}(\rho)
\sim
\begin{dcases}
\rho^{2s-1}
&%
\textas \rho\to 0^+,\\
|\rho-1|^{-(1+2s)}
& %
\textas \rho \to 1,\\
\rho^{-(N+1)}
& %
\textas \rho \to +\infty,
\end{dcases}
\end{equation}
\begin{equation}\label{eq:K-asymp-2}
K_{N,-s}(\rho)
\sim
\begin{dcases}
\rho^{-1}\quad\quad\quad
&%
\textas \rho\to 0^+,\\
|\rho-1|^{2s-1}\quad\quad\quad
&
\textas \rho\to 1,\, \text{ for } s\neq \tfrac12,\\
\log\tfrac{1}{|\rho-1|}\quad\quad\quad
&
\textas \rho\to 1,\, \text{ for } s=\tfrac12,\\
\rho^{-(N+1-2s)}
&%
\textas \rho\to+\infty.
\end{dcases}
\end{equation}

\end{enumerate}
\end{prop}

We remark that while explicit asymptotic constants are not needed for our purposes, they can be obtained from the representation \eqref{eq:K-2F1}.

\begin{proof}
The computations of non-local operators on radially symmetric functions are well-known to the experts. For the readers' convenience, we provide details in \Cref{sec:computations}.
\end{proof}

\begin{remark}
Note that for $s\in(0,1)$, the kernel $K_{N,s}$ has been computed in \cite[Section 4.1]{fat} (see also \cite[Lemma 2.5]{DPGW}) in the Emden--Fowler variable $t=-\log r$. Indeed, we can write
\[
e^{-Nt}\Ds\left(
    e^{(N-2s)t}v(e^{-t})
\right)
=\PV\int_{\R}
    \tilde{K}_{N,s}(\tau)
    (v(t)-v(t+\tau))
\,d\tau
\]
where
\[
	\tilde{K}_{N,s}(\tau)
	=c{e^{-\frac{N-2s}{2}\tau}e^{-\frac{N+2s}{2} |\tau|}}\Hyperg\left(
	\frac{N+2s}{2},1+s,\frac{N}{2};e^{-2|\tau|}
	\right)
	\asymp
	\begin{dcases}
		|\tau|^{-1-2s}
		& \textas \tau\to0,\\
		e^{2s\tau}	
		& \textas \tau\to{-\infty},\\
		e^{-N\tau}	
		& \textas \tau\to{+\infty}.
	\end{dcases}
	\]

This exact expression only holds for radial functions (zeroth spherical mode). Similar asymptotic behaviors are known for higher modes. %
\end{remark}

\subsection{Asymptotic computations}

Due to the criticality of the problem we introduce $\log$-polyhomogeneous functions singular at the origin, and we compute explicit asymptotic behaviors under the action of the fractional Laplacian (or Riesz potential) in order to obtain fine barriers.

Let $\chi\in C_c^\infty(\R^N,[0,1])$ be a radial cut-off function with $\chi=1$ in $B_{1/4}$ and  $\chi=0$ in $\R^N\setminus B_{1/2}$. For $\mu,\nu,\vartheta\geq0$, define the ($\log$-)polyhomogeneous functions
\begin{equation}\label{eq:logpoly}
\phi_{\mu,\nu}^\vartheta(r;\eps)
=\dfrac{
    \chi( r)
    (\log\log\frac{1}{\eps r})^\vartheta
}{
    r^{\mu}(\log\frac{1}{\eps r})^{\nu}
},
    \qquad \eps\in(0,1], r>0.
\end{equation}
\begin{equation}\label{eq:logpoly-infty}
\tilde{\phi}_{\mu}^{\nu}(r)
=
\left(1-\chi\left(\frac{r}{4}\right)\right)
\dfrac{
    (\log r)^{\nu}
}{
    r^{\mu}
},
    \qquad r>0.
\end{equation}
We emphasize that $\vartheta$ is a parameter as superscript in $\phi_{\mu,\nu}^\vartheta$. When $\vartheta=0$ we simply write $\phi_{\mu,\nu}$.

Define the constants

\begin{equation}\label{eq:Ki}
\kappa_i
=\kappa_i(N,s)
:=\int_0^\infty
    K_{N,s}(\rho)
    \left(\log\frac{1}{\rho}\right)^{i}
\,d\rho. \quad i\in\N.
\end{equation}
Morally, near the origin, the fractional Laplacian also decreases the exponent of the logarithm by one, at the power of the fundamental solution (i.e. $N-2s$). Indeed, by putting $v(r)=(\log\frac{1}{\eps r})^{-\nu}$ in \Cref{prop:K}, a binomial expansion yields
\[\begin{split}
\left(\log\frac{1}{\eps r}\right)^{-\nu}
-\left(
    \log\frac{1}{\eps r}
    +\log\frac{1}{\rho}
\right)^{-\nu}
=\left(\log\frac{1}{\eps r}\right)^{-\nu}
\left(
    1-\left(
        1+\dfrac{
            \log\frac{1}{\rho}
        }{
            \log\frac{1}{\eps r}
        }
    \right)^{-\nu}
\right)
\sim\nu
\left(
    \log\frac{1}{\eps r}
\right)^{-\nu-1}
    \log\frac{1}{\rho}.
\end{split}\]
Thus we expect $\Ds \phi_{N-2s,\nu}\sim \kappa_1\nu\phi_{N,\nu+1}$, in parallel with the local computations
\[
-\Delta \phi_{N-2,\nu}
=(N-2)\nu\phi_{N,\nu+1}
    -\nu(\nu+1)\phi_{N,\nu+2}.
\]
With the $\log\log$-correction, we can do a similar expansion, namely
\begin{equation}
\label{eq:binom-loglog}
\dfrac{
    \log\log\frac{1}{\eps r}
}{
    (\log\frac{1}{\eps r})^\nu
}
-\dfrac{
    \log\log\frac{1}{\eps r\rho}
}{
    (\log\frac{1}{\eps r \rho})^\nu
}
=
\dfrac{
    \log\log\frac{1}{\eps r}
}{
    (\log\frac{1}{\eps r})^\nu
}
-\dfrac{
    \log\log\frac{1}{\eps r}
    +\log\left(
        1+\dfrac{
            \log\frac{1}{\rho}
        }{
            \log\frac{1}{\eps r}
        }
    \right)
}{
    (\log\frac{1}{\eps r})^\nu
    \left(
        1+\dfrac{
            \log\frac{1}{\rho}
        }{
            \log\frac{1}{\eps r}
        }
    \right)^\nu
}
\sim
    \dfrac{
        \nu\log\log\frac{1}{\eps r}-1
    }{
        (\log\frac{1}{\eps r})^{\nu+1}
    }.
\end{equation}
This suggests $\Ds \phi_{N-2s,\nu}^{1}\sim \kappa_1(\nu\log\log\frac{1}{\eps r}-1)\phi_{N,\nu+1}$ and corresponds exactly to the local formula
\[
-\Delta\phi_{N-2,\nu}^1
=(N-2)\left(
    \nu\log\log\frac{1}{\eps r}-1
\right)\phi_{N,\nu+1}
    +O(\phi_{N,\nu+2}^1).
\]

\begin{prop}[Explicit computations at the origin]
\label{prop:loghomo}
Let $s\in(0,1)$. There exists a constant $C>0$ such that for any $\nu>\frac{N-2s}{N}$, $\eps\in(0,1/4]$ and $r>0$,
\begin{multline*}
\abs{
    \Ds \phi_{N-2s,\nu}
    -\left(
        \nu \kappa_1 \phi_{N,\nu+1}
        -\dfrac{\nu(\nu+1)\kappa_2 }{2} \phi_{N,\nu+2}
    \right)
        \oneset{ r<1/8}
}\\
\leq C\left(
        \phi_{N,\nu+3}
            \oneset{r<1/8}
        +\dfrac{1}{r^{N+2s}\left(\log\frac{1}{\eps}\right)^{\nu}}
            \oneset{ r\geq
            1/8}
    \right),
\end{multline*}
\begin{align*}
\abs{
    \Ds \phi_{N-2s,\nu}^1
    -\left(
        \nu \kappa_1 \phi_{N,\nu+1}^1
        -\kappa_1 \phi_{N,\nu+1}
    \right)
        \oneset{r<1/8}
}
\leq C\left(
        \phi_{N,\nu+2}^1
            \oneset{ r<1/8}
        +\dfrac{
        \log\log\frac{1}{\eps}
    }{ r^{N+2s} \left(
      \log\frac{1}{\eps}
    \right)^{\nu}}
            \oneset{ r\geq 1/8}
    \right).
\end{align*}
Here $\phi_{\mu,\nu}=\phi_{\mu,\nu}(r;\eps)$ and $\phi^1_{\mu,\nu}=\phi^1_{\mu,\nu}(r;\eps)$.
\end{prop}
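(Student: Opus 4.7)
The plan is to apply (2) of \Cref{prop:K} with $v(r) := \chi(r)(\log\tfrac{1}{\eps r})^{-\nu}$, giving
\[
r^N\Ds\phi_{N-2s,\nu}(r) = \PV\int_0^\infty K_{N,s}(\rho)\bigl(v(r)-v(r\rho)\bigr)\,d\rho.
\]
For $r<1/8$, $\chi(r)=1$ and $v(r)=L^{-\nu}$ with $L:=\log\tfrac{1}{\eps r}$. Setting $\ell:=\log\tfrac{1}{\rho}$, the binomial expansion
\[
L^{-\nu}-(L+\ell)^{-\nu} = \nu\ell L^{-\nu-1} - \tfrac{\nu(\nu+1)}{2}\ell^2 L^{-\nu-2} + O(\ell^3 L^{-\nu-3})
\]
is uniformly valid on the Taylor-regular range $|\ell|/L\leq 1/2$ (where additionally $\chi(r\rho)=1$). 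Integrating against $K_{N,s}$, with the first-order term interpreted as the principal value $\kappa_1$ (well-defined through the kernel identity $K_{N,s}(1/\rho)=\rho^{N+2-2s}K_{N,s}(\rho)$ and antisymmetry of $\ell$ under $\rho\leftrightarrow 1/\rho$) gives the leading expression $\nu\kappa_1 L^{-\nu-1}-\tfrac{\nu(\nu+1)\kappa_2}{2}L^{-\nu-2}$, with a Taylor-remainder contribution controlled by a constant times $\kappa_3 L^{-\nu-3}$. The remainder range ($|\ell|>L/2$ or $\chi(r\rho)<1$) is handled using the asymptotics of (1) of \Cref{prop:K} ($K_{N,s}\lesssim \rho^{2s-1}$ as $\rho\to 0^+$ and $\lesssim\rho^{-N-1}$ as $\rho\to\infty$) combined with $|v|\lesssim L^{-\nu}$, whose direct integration is absorbed after division by $r^N$. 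This delivers the interior part of the bound.

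For $r\geq 1/8$ I would use \eqref{eq:PV-upper}. On $[1/8,2]$, each of $v$, $rv'$, $r^2v''$ is supported in $\{r\leq 1/2\}$ and uniformly $O((\log\tfrac{1}{\eps})^{-\nu})$ there since $L\geq\log\tfrac{1}{\eps}$ on this support, and the non-local tails $\int_0^{1/2}v(r\rho)\,d\rho$ and $\int_2^\infty v(r\rho)\rho^{-N-1}\,d\rho$ obey the same bound. For $r\geq 2$, $\phi_{N-2s,\nu}(r)=0$ so that
\[
\Ds\phi_{N-2s,\nu}(x) = -C_{N,s}\int_{\R^N}\phi_{N-2s,\nu}(y)|x-y|^{-N-2s}\,dy,
\]
which is $\lesssim r^{-N-2s}\norm[L^1(\R^N)]{\phi_{N-2s,\nu}}\lesssim r^{-N-2s}(\log\tfrac{1}{\eps})^{-\nu}$ by the direct estimate $\int_0^{1/2}r^{2s-1}(\log\tfrac{1}{\eps r})^{-\nu}\,dr\lesssim (\log\tfrac{1}{\eps})^{-\nu}$.

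The second estimate is proved by the same scheme after Taylor-expanding $\log(L+\ell)/(L+\ell)^\nu$ to first order in $\ell$:
\[
\tfrac{\log L}{L^\nu} - \tfrac{\log(L+\ell)}{(L+\ell)^\nu} = (\nu\log L - 1)L^{-\nu-1}\ell + O\bigl((\log L)\,\ell^2 L^{-\nu-2}\bigr).
\]
Integrating against $K_{N,s}$ yields the main term $\kappa_1(\nu\log L - 1)L^{-\nu-1}/r^N = \nu\kappa_1\phi_{N,\nu+1}^1 - \kappa_1\phi_{N,\nu+1}$ with error $O(\phi_{N,\nu+2}^1)$ in the interior; the exterior estimate is analogous, with $(\log\tfrac{1}{\eps})^{-\nu}$ replaced by $\log\log\tfrac{1}{\eps}\cdot(\log\tfrac{1}{\eps})^{-\nu}$.

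The main technical difficulty is tracking the principal value at $\rho=1$ uniformly in $\eps\in(0,1/4]$ while correctly isolating the cutoff correction; once the splitting between Taylor-regular and remainder ranges is chosen, all that remains are one-dimensional integrals in $\rho$ whose bounds follow directly from the kernel asymptotics in (1) of \Cref{prop:K}.
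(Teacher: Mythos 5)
Your proposal is correct and follows essentially the same route as the paper's proof: the representation from \Cref{prop:K}(2), a binomial/Taylor expansion on the window $|\log\frac1\rho|\le\frac12\log\frac{1}{\eps r}$ that is then completed to the full integrals $\kappa_1,\kappa_2$ (with the principal value at $\rho=1$ handled by the kernel symmetry), kernel asymptotics for the remaining ranges, and \eqref{eq:PV-upper} together with the direct far-field formula for $r\gtrsim 1/8$. The only differences are cosmetic: the paper splits the outer region at $r=1$ rather than $r=2$, and it makes explicit the case distinction $r\gtrless\eps/16$ that your parenthetical ``where additionally $\chi(r\rho)=1$'' glosses over, since for $r>\eps/16$ the cutoff threshold $\rho=\frac{1}{4r}$ falls inside your Taylor window and the extra piece must be estimated separately (as the paper does).
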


\begin{prop}[Explicit computations at infinity]
\label{prop:loghomo-infty}
Let $s\in(0,1)$. There exists a constant $C>0$ such that for any $\nu>0$, $r>0$,
\[
\Ds \tilde{\phi}_{N-2s}^{\nu}(r)
=\left(
    \nu\kappa_1 \tilde{\phi}_{N}^{\nu-1}(r)
    +O\left(
        \tilde{\phi}_{N}^{\nu-2}(r)
    \right)
\right)
    \oneset{r>4}
    +O\left(
        r^{-N}
    \right)
    \oneset{r\leq 4}.
\]
\end{prop}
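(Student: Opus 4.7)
\medskip

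\noindent\textbf{Proof plan for \Cref{prop:loghomo-infty}.}

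The plan is to mimic the proof of \Cref{prop:loghomo} by invoking the radial integral representation of \Cref{prop:K}(2). Writing $\tilde{\phi}_{N-2s}^{\nu}(r) = v(r)/r^{N-2s}$ with $v(r) = (1-\chi(r/4))(\log r)^\nu$, which vanishes for $r\le 1$ and equals $(\log r)^\nu$ for $r\ge 2$, we have
\[
r^N\Ds\tilde{\phi}_{N-2s}^{\nu}(r)
=\PV\int_0^\infty K_{N,s}(\rho)\bigl[v(r)-v(r\rho)\bigr]\,d\rho =:I.
\]
The strategy is then to split into the regimes $r\le 4$ and $r>4$, using direct bounds in the former and a binomial/Taylor expansion in the latter. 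Throughout, we use the kernel asymptotics from \Cref{prop:K}(1), $K_{N,s}(\rho)\asymp\rho^{2s-1}$ near $0$, $\asymp |\rho-1|^{-1-2s}$ near $1$, and $\asymp\rho^{-N-1}$ at infinity.

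For $r\in(0,1]$ one has $v(r)=0$, so $I=-\int_{1/r}^{\infty}K_{N,s}(\rho)v(r\rho)\,d\rho$; the change of variables $\tilde\rho=r\rho$ together with $K_{N,s}(\rho)\lesssim\rho^{-N-1}$ yields $|I|\lesssim r^N\int_1^\infty\tilde\rho^{-N-1}(\log\tilde\rho)^\nu\,d\tilde\rho\lesssim r^N$, which gives $|\Ds\tilde\phi_{N-2s}^\nu(r)|=O(1)\subset O(r^{-N})$. For $r\in(1,4]$, $v$ is smooth with bounded derivatives on any fixed neighbourhood, and a direct application of the pointwise bound \eqref{eq:PV-upper} of \Cref{prop:K}(2), combined with a similar change-of-variables estimate for the far tail $\int_2^\infty v(r\rho)\rho^{-N-1}\,d\rho$, yields $r^N|I|=O(1)$, i.e.\ the claimed $O(r^{-N})$ bound.

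For $r>4$ one has $v(r)=(\log r)^{\nu}$, and the cut-off inside $v(r\rho)$ is active only for $\rho\in(1/r,2/r)$, where $K_{N,s}$ is bounded by $(2/r)^{2s-1}$; this transition zone contributes $O(r^{-2s}(\log r)^\nu)$, which is absorbed in the error. Splitting the remaining integral as $I\approx I_{\mathrm{near 0}}+I_{\mathrm{Taylor}}+I_{\mathrm{near\infty}}$ corresponding to $\rho\in(2/r,r^{-1/2})$, $\rho\in(r^{-1/2},r^{1/2})$, and $\rho\in(r^{1/2},\infty)$, the two outer pieces are bounded crudely by $O((\log r)^\nu\,r^{-s})$ (exploiting $K_{N,s}\lesssim\rho^{2s-1}$ and $\lesssim\rho^{-N-1}$ respectively), which again fits into the error. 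In the main range $|\log\rho|<\tfrac12\log r$, expand
\[
(\log r)^\nu-(\log r\rho)^\nu
=(\log r)^\nu\Bigl[1-\bigl(1+\tfrac{\log\rho}{\log r}\bigr)^\nu\Bigr]
=\nu(\log r)^{\nu-1}\log\tfrac{1}{\rho}+O\!\Bigl(\tfrac{(\log\rho)^2}{(\log r)^2}(\log r)^\nu\Bigr),
\]
and integrate against $K_{N,s}$; by \eqref{eq:Ki} and completion of the truncated integral to $(0,\infty)$ (the tails contributing admissible errors by the kernel decay), one obtains $I_{\mathrm{Taylor}}=\nu\kappa_1(\log r)^{\nu-1}+O((\log r)^{\nu-2})$, and division by $r^N$ gives the stated formula. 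The PV cancellation near $\rho=1$ is harmless because $v(r)-v(r\rho)=-rv'(r)(\rho-1)+O((\rho-1)^2)$ is integrable against $K_{N,s}(\rho)$ in the symmetric PV sense.

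The main technical point — the only place where care is needed beyond \Cref{prop:loghomo} — is the asymmetry of the regime: the leading expansion coefficient $\nu\kappa_1$ now comes with a \emph{positive} sign (reflecting the fact noted after \Cref{thm:Liouville-diff} that $\Ds(r^{-(N-2s)}(\log r)^\nu)\sim +\nu\kappa_1 r^{-N}(\log r)^{\nu-1}$ at infinity). One must verify this sign by carrying the binomial expansion consistently, and ensure that the tails at $\rho\to 0^+$ and $\rho\to\infty$, as well as the cut-off transition at $\rho\sim 2/r$, are uniformly of order $(\log r)^{\nu-2}/r^N$ or smaller — which is exactly what the kernel bounds of \Cref{prop:K}(1) deliver.
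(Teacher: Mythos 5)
Your proposal is correct and follows essentially the same route as the paper's proof: the radial kernel representation of \Cref{prop:K}, the split $r\leq 4$ versus $r>4$, and the decomposition of the $\rho$-integral into the cut-off transition zone near $\rho\sim 1/r$, the inner and outer tails (each $O(r^{-s}(\log r)^\nu)$ or better), and the window $[r^{-1/2},r^{1/2}]$ where the binomial expansion yields $\nu\kappa_1(\log r)^{\nu-1}+O\bigl((\log r)^{\nu-2}\bigr)$. The only caveat is cosmetic: for $r\in(\tfrac12,1]$ the crude bound $K_{N,s}(\rho)\lesssim\rho^{-N-1}$ fails near $\rho=1$, but the smooth vanishing of $\tilde{\phi}_{N-2s}^{\nu}(r\rho)$ there (or simply applying \eqref{eq:PV-upper} on all of $r\leq 4$, which is in effect what the paper does via \Cref{rmk:PV}) gives the same $O(1)$ bound.
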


Their proofs are tedious and are postponed to \Cref{sec:proof-loghomo}.

\section{Construction of a singular radial solution: Proof of \Cref{thm:rad-s}}
\label{sec:radial}

\subsection{General strategy}
Let
\begin{equation}\label{eq:c0}
c_0=c_0(N,s)
=\left(
    \dfrac{N-2s}{2s}
    \kappa_1(N,s)
\right)^{\frac{N-2s}{2s}}
>0
\end{equation}
and
\begin{equation}\label{eq:c1}
c_1=c_1(N,s)
=-\dfrac{(N-2s)N}{8s^2}
    \dfrac{\kappa_2(N,s)}{\kappa_1(N,s)}c_0(N,s)
<0,
\end{equation}
where $\kappa_i(s)$ are given in \eqref{eq:Ki}. Define the \emph{Ansatz}
\[
\bar{u}_\eps(r)
=c_0\phi_{N-2s,\frac{N-2s}{2s}}(r;\eps)
+c_1\phi_{N-2s,\frac{N}{2s}}^1(r;\eps),
\]
where $\phi_{\mu,\nu}^{\vartheta}$ is defined in \eqref{eq:logpoly}. %
We remark that the second term is crucial to improve the decay of the error in order that a linear theory can be developed by a direct barrier and continuation argument. Moreover, the correction has to involve a $\log\log$-term; otherwise the improvement would cancel.

Since we do not know \emph{a priori} the sign of the perturbation, we consider instead
\begin{equation}\label{eq:main-abs-radial}
\begin{cases}
\Ds u=|u|^{\frac{N}{N-2s}}
    & \text{ in } B_1\setminus\set{0},\\
u=0
    & \text{ in } \R^N\setminus B_1.
\end{cases}
\end{equation}
Indeed, any solution is $s$-superharmonic and thus positive. Looking for a true solution $u_\eps=\bar{u}_\eps+\varphi$, we have
\[\begin{dcases}
\cL_\eps\varphi=-\cE_\eps+\cN[\varphi]
    & \text{ in } B_1\setminus\set{0},\\
\varphi=0
    & \text{ in } \R^N\setminus B_1,
\end{dcases}\]
where we denote the error by
\[
\cE_\eps=\Ds \bar{u}_\eps -\bar{u}_\eps^{\frac{N}{N-2s}},
\]
the linearized operator by
\begin{equation}\label{eq:L1}
\begin{split}
\cL_\eps\varphi
&=
	\Ds\varphi
    -\frac{N}{N-2s}
        (\bar{u}_{\eps})^{\frac{2s}{N-2s}}
        \varphi
=\Ds\varphi
    -\dfrac{N}{2s}\kappa_1
    \dfrac{
        \chi^{\frac{2s}{N-2s}}(r)
    }{
        r^{2s}
        \log\frac{1}{\eps r}
    }\left(
        1+O\left(
            \dfrac{
                \log\log\frac{1}{\eps r}
            }{
                \log\frac{1}{\eps r}
            }
        \right)
    \right)\varphi,
\end{split}\end{equation}
and the nonlinear term by
\begin{equation*}%
\cN[\varphi]
=
    |\bar{u}_\eps+\varphi|^{\frac{N}{N-2s}}
    -(\bar{u}_\eps)^{\frac{N}{N-2s}}
    -\dfrac{N}{N-2s}
        (\bar{u}_\eps)^{\frac{2s}{N-2s}}
        \varphi.
\end{equation*}
By studying the mapping properties of $\cL_\eps$, we will justify the fixed-point formulation
\begin{equation}\label{eq:fixed-point-radial}
\varphi
=\cL_\eps^{-1}(-\cE_\eps
    +\cN[\varphi]),
\end{equation}
and solve it by the implicit function theorem.
More precisely, for $\mu\geq N-2s$, $\nu>\frac{N-2s}{2s}$, $\alpha\in[0,1]$, $\bar{C}_1>0$ (to be chosen sufficiently large), we define the weights
\begin{equation}\label{eq:weight-rad}
w_{\mu,\nu;\alpha}(r)
:=\phi_{\mu,\nu}(r;1/4)
    +(\bar{C}_1)^{(\sign \alpha)_+}
        (1-|x|^2)_+^\alpha.
\end{equation}
Hereafter we denote the positive part of a real number $a\in\R$ as $a_+=\max\set{a,0}$. Note also that $(\sign\alpha)_+=0$ for $\alpha\leq 0$ and $(\sign\alpha)_+=1$ for $\alpha>0$. This weight includes the following cases:
\begin{itemize}
\item $w_{N,\frac{N+3s}{2s};0}$, which represents the size of the error $\cE_\eps$ (\Cref{prop:error});
\item $w_{N-2s,\frac{N+s}{2s};s}$, which represents the order of the perturbation $\varphi$ (see \Cref{prop:a_priori}; the numerology following from \Cref{prop:loghomo}, at least near the singularity);
\item $w_{N-2s,\frac{N+s/2}{2s};\alpha_1}$ (where $\alpha_1\in(0,s)$), a $\cL_\eps$-superharmonic function which is more singular than $w_{N-2s,\frac{N+s}{2s};s}$ (needed for the application of the $L^2$-maximum principle).
\end{itemize}
We define the corresponding norms for $L^{\infty}_{\loc}(B_1\setminus\set{0})$-functions by
\[
\norm[\mu,\nu;\alpha]{\varphi}
:=\norm[L^\infty(B_1 \setminus \set{0})]{w_{\mu,\nu;\alpha}^{-1}\varphi}.
\]
Thus the error $\cE_\eps$ is contained in
\[
Y:=\set{
    f\in L^\infty_{\loc}(B_1 \setminus \set{0}):
    \norm[N,\frac{N+3s}{2s};0]{f}
    \leq 2\bar{C}_2
    |\log\eps|^{
    	-\min(\frac{1}{2}, \frac{N-2s}{2s})
    }
},
\]
for some $\bar{C}_2>0$ (given in \Cref{prop:error}). We will search for $\varphi$ in the Banach space
\begin{equation*}
X:=\set{
    \varphi\in L^\infty_{\loc}(B_1\setminus \set{0}):
    \norm[N-2s,\frac{N+s}{2s};s]{\varphi}
    \leq 2\bar{C}_2\bar{C}_3|\log\eps|^{-\min(\frac{1}{2}, \frac{N-2s}{2s})}
},
\end{equation*}
where $\bar{C}_3>0$ is a large constant to be fixed. (In fact, $\bar{C}_3=(\bar{c}_2)^{-1}$ with $\bar{c}_2$ given in \Cref{prop:supersol}.)  Define also the spaces
\[
\widetilde{Y}:=\set{
    g\in L^\infty_{\loc}(B_1 \setminus \set{0}):
    \norm[N,\frac{N+3s}{2s};0]{g}
    <+\infty
},
\]
\[
\widetilde{X}:=\set{
    v\in L^\infty_{\loc}(B_1\setminus \set{0}):
    \norm[N-2s,\frac{N+s}{2s};s]{v}
    <+\infty
}.
\]
We will show that $\cL_\eps^{-1}:\widetilde{Y} \to \widetilde{X}$ is a (uniformly-in-$\eps$) bounded operator. %
Using $w_{N-2s,\frac{N+s}{2s};s}$ itself as a barrier, we prove an \emph{a priori} estimate (\Cref{prop:a_priori}). Then the existence of the inverse operator follows from the method of continuity (\Cref{prop:exis_uniq}). Finally, we will apply the contraction mapping principle in $X$ (\Cref{prop:G-new}).

Although these function spaces contain non-radial functions, the solutions we construct are indeed radial by the method of moving plane (\Cref{prop:sym}). %

\subsection{Error estimate}

\begin{prop}[Error estimate]
\label{prop:error}
There exists a universal constant $\bar{C}_2$ such that for all $r\in(0,1)$,
\[
|\cE_\eps(r)|
\leq
    \bar{C}_2
    |\log\eps|^{-\min(\frac{1}{2},\frac{N-2s}{2s})}
    w_{N,\frac{N+3s}{2s};0}(r).
\]
\end{prop}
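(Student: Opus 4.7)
The plan is to verify that the constants $c_0$ and $c_1$ defined in \eqref{eq:c0}--\eqref{eq:c1} are precisely the unique choices that kill the three leading terms in the expansion of $\cE_\eps$, and then to extract the prefactor $|\log\eps|^{-\gamma}$ from the improved decay of the remainder. Set $\nu_0:=\frac{N-2s}{2s}$, so that $\tfrac{N}{N-2s}\nu_0=\nu_0+1$ and the target weight exponent equals $\nu_0+\tfrac52$. Split the domain into the deep interior $r\in(0,1/8)$, where $\chi\equiv 1$ and \Cref{prop:loghomo} applies with its explicit leading expansion, and the transition zone $r\in[1/8,1)$, where both sides of $\cE_\eps$ are simply bounded by $|\log\eps|^{-\nu_0}$ (times harmless factors).

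For $r<1/8$, applying \Cref{prop:loghomo} termwise to $\bar u_\eps=c_0\phi_{N-2s,\nu_0}+c_1\phi_{N-2s,\nu_0+1}^{1}$ gives
\[
\Ds\bar u_\eps
=c_0\nu_0\kappa_1\,\phi_{N,\nu_0+1}
-\Bigl(c_0\tfrac{\nu_0(\nu_0+1)\kappa_2}{2}+c_1\kappa_1\Bigr)\phi_{N,\nu_0+2}
+c_1(\nu_0+1)\kappa_1\,\phi_{N,\nu_0+2}^{1}
+R_1,
\]
with $R_1=O(\phi_{N,\nu_0+3}^{1})$. Since the ratio $\phi_{N-2s,\nu_0+1}^{1}/\phi_{N-2s,\nu_0}=\log\log\tfrac1{\eps r}/\log\tfrac1{\eps r}$ is small, Taylor expanding the nonlinearity yields
\[
\bar u_\eps^{\frac{N}{N-2s}}
=c_0^{\frac{N}{N-2s}}\phi_{N,\nu_0+1}
+\tfrac{N}{N-2s}\,c_0^{\frac{2s}{N-2s}}c_1\,\phi_{N,\nu_0+2}^{1}
+R_2,
\]
with $R_2=O\!\bigl(\phi_{N,\nu_0+3}(\log\log\tfrac1{\eps r})^{2}\bigr)$.

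A direct matching gives the three cancellations: the $\phi_{N,\nu_0+1}$ terms vanish exactly when $c_0^{\frac{2s}{N-2s}}=\nu_0\kappa_1$, which is \eqref{eq:c0}; the $\phi_{N,\nu_0+2}$ terms vanish exactly when $c_1=-c_0\nu_0(\nu_0+1)\kappa_2/(2\kappa_1)$, which is \eqref{eq:c1}; and the algebraic identity $\tfrac{N}{N-2s}\cdot\nu_0=\nu_0+1$ forces the $\phi_{N,\nu_0+2}^{1}$ terms to agree automatically once $c_0$ is chosen as above. Hence $|\cE_\eps(r)|\lesssim\phi_{N,\nu_0+3}^{1}(r;\eps)+\phi_{N,\nu_0+3}(r;\eps)(\log\log\tfrac1{\eps r})^{2}$ on $r<1/8$. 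Absorbing $\log\log\tfrac1{\eps r}\leq C_\delta(\log\tfrac1{\eps r})^{\delta}$ for a small $\delta>0$, and using $\log\tfrac1{\eps r}\geq\log\tfrac1\eps$ together with $\log\tfrac4r\leq\log\tfrac1{\eps r}$ (for $\eps\leq 1/4$), one extracts a factor $|\log\eps|^{-\gamma}$ with $\gamma=\tfrac12\wedge\nu_0$, leaving precisely $(\log\tfrac4r)^{-(\nu_0+5/2)}\asymp w_{N,(N+3s)/(2s);0}(r)$.

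In the transition region $r\in[1/8,1)$, \Cref{prop:loghomo} directly gives $|\Ds\bar u_\eps|\leq C|\log\eps|^{-\nu_0}r^{-N-2s}$, and $\bar u_\eps^{\frac{N}{N-2s}}$ is bounded by $C|\log\eps|^{-(\nu_0+1)}$ on the support of $\chi$; since $w_{N,(N+3s)/(2s);0}(r)\gtrsim 1$ there and $\gamma\leq\nu_0$, the desired bound holds automatically. The main technical point is the simultaneous triple cancellation in Step~3: the very purpose of the log-log corrector $c_1\phi_{N-2s,\nu_0+1}^{1}$ is to annihilate \emph{both} the subleading linear term arising in $\Ds\phi_{N-2s,\nu_0}$ \emph{and} the Taylor-linear term arising from the nonlinearity, so that the leftover error decays faster than the target weight by half a power of $\log$; without this matching, the bound would only reach $|\log\eps|^0$ and the linear-barrier scheme in \Cref{sec:radial} would not close.
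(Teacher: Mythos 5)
Your proposal is correct and follows essentially the same route as the paper's proof: you apply \Cref{prop:loghomo} termwise to the two pieces of $\bar u_\eps$, Taylor-expand the nonlinearity in $\log\log\tfrac{1}{\eps r}/\log\tfrac{1}{\eps r}$, observe that the choices \eqref{eq:c0}--\eqref{eq:c1} together with the identity $\tfrac{N}{N-2s}\cdot\tfrac{N-2s}{2s}=\tfrac{N}{2s}$ cancel the $\phi_{N,\frac{N}{2s}}$, $\phi_{N,\frac{N+2s}{2s}}$ and $\phi_{N,\frac{N+2s}{2s}}^1$ contributions, and are left with an interior error of size $\phi^2_{N,\frac{N+4s}{2s}}$ plus an outer error of size $|\log\eps|^{-\frac{N-2s}{2s}}r^{-N-2s}$, exactly as in the paper. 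Two caveats, both shared with (and inherited from) the paper's own computation rather than gaps of yours: your cancellation condition carries the coefficient $\tfrac{\nu(\nu+1)}{2}\kappa_2$ from \Cref{prop:loghomo} with $\nu=\tfrac{N-2s}{2s}$, i.e. $c_1=-\tfrac{(N-2s)N}{8s^2}\tfrac{\kappa_2}{\kappa_1}c_0$, while \eqref{eq:c1} is written with $\tfrac{(N-2s)N}{4s^2}$ (a factor-of-two bookkeeping discrepancy internal to the paper), and in the final absorption the surviving $(\log\log\tfrac{1}{\eps r})^2$ factor means that in the borderline case $\tfrac{N-2s}{2s}\geq\tfrac12$ one strictly extracts only $|\log\eps|^{-(\frac12-\delta)}$ for arbitrarily small $\delta>0$ (or keeps a $\log\log$ loss), which is harmless for the fixed-point scheme but not literally the exponent $\tfrac12\wedge\tfrac{N-2s}{2s}$ claimed in your last step and in the paper's final display.
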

\begin{proof}
	Use \Cref{prop:loghomo} and \eqref{eq:c0}--\eqref{eq:c1}, thus $c_0\frac{(N-2s)N}{8s^2}\kappa_2+c_1\kappa_1=0$ and $c_0^{\frac{N}{N-2s}}=c_0\frac{N-2s}{2s}\kappa_1$.
\end{proof}
\COMMENT{
Therefore,
{\[\begin{split}
\cE_\eps=&\Dh u_\eps -u_\eps^{\frac{N}{N-1}}
\\
=&
r^{-N}\left[K_1\left(c_1\frac{N\log\log\frac{1}{\eps r} -1}{\left(\log\frac{1}{\eps r}\right)^{N+1}} +c_0\frac{(N-1)}{\left(\log\frac{1}{\eps r}\right)^{N}}\right) -	c_0\frac{N(N-1)}{2} \frac{K_2}{\left(\log\frac{1}{\eps r}\right)^{N+1}}
\right.\\
&\left.\quad\quad
	+O\left(\frac{\log\log\frac{1}{\eps r}}{\left(\log\frac{1}{\eps r}\right)^{N+2}
	}\right)	
	\right]\chi_{(0,\frac18]}(r)\\
	&
	+\left[O\left(\frac{\log\log\frac{1}{\eps }}{r^{N+1}\left(\log\frac{1}{\eps }\right)^{N}
	}\right)+O\left(\frac{1}{r^{N+1}\left(\log\frac{1}{\eps }\right)^{N+1}
	}\right)\right]\chi_{(\frac18,\infty)}(r)\\
& -\left(\dfrac{c_0^{\frac{N}{N-1}}\chi^\frac{N}{N-1}_{*}( r)}{r^{N}(\log\frac{1}{\eps r})^{N}}
+\dfrac{c_0^{\frac{1}{N-1}}c_1N\left(\log\log\frac{1}{\eps r}\right)\chi^\frac{N}{N-1}_{*}( r)}{(N-1)r^{N}(\log\frac{1}{\eps r})^{N+1}}+O\left(
    \dfrac{\left(\log\log\frac{1}{\eps r}\right)^2\chi_{*}( r)}{r^N(\log\frac{1}{\eps r})^{N+2}}
\right)\right)\\
=& \left\{\begin{split}O\left(\frac{\left(\log\log\frac{1}{\eps r}\right)^2}{r^N\left(\log\frac{1}{\eps r}\right)^{N+2}
	}\right)%
	, \quad r\in (0,\frac{1}{8}],\\
O\left(\frac{\log\log\frac{1}{\eps }}{r^{N+1}\left(\log\frac{1}{\eps }\right)^{N}
	}\right), \quad r\in (\frac{1}{8},\infty).\end{split}\right.
\end{split}
\]}}

\subsection{Linear theory}

We consider the linear problem
\begin{equation*}
\begin{dcases}
\cL_\eps \varphi
=f,
    & \text{ in } B_1\setminus\set{0} \\
\varphi=0,
    & \texton \R^N\setminus B_1.
\end{dcases}
\end{equation*}
where $\cL_\eps\varphi$ is the linearized operator as defined in \eqref{eq:L1}.

\begin{lem}
\label{lem:Ds-torsion}
Let $N\geq 2$, $s\in(0,1)$ and $\alpha\in(0,s]\setminus\set{2s-1}$, then there exists a constant $c=c(N,s,\alpha)>0$ %
such that
\[
\Ds (1-|x|^2)_+^{\alpha}
\geq
    c
    +c(s-\alpha)(1-|x|^2)^{-(2s-\alpha)},
        \qquad \forall x\in B_1.
\]
\end{lem}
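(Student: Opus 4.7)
The plan is to reduce the statement to a pointwise inequality for a single Gauss hypergeometric function via Dyda's formula, then extract both the additive constant and the boundary-singular part by combining the Taylor series at $0$ with the standard connection formula near $z=1$.

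First I would apply the cited theorem of Dyda with parameters $a=\frac{N+2s}{2}$, $b=s-\alpha$, $c=\frac{N}{2}$, rewriting \eqref{Ds_torsion} as
\[
\Ds(1-|x|^2)_+^{\alpha} = D_{N,s,\alpha}\cdot\Hyperg(a,b,c;|x|^2),\qquad D_{N,s,\alpha}:=\frac{C_{N,s}\pi^{N/2}\abs{\Beta(\alpha+1,-s)}}{\Gamma(N/2)}>0.
\]
Since $a,c>0$ and $b=s-\alpha\geq 0$, every Pochhammer symbol in the defining power series is non-negative, so $\Hyperg(a,b,c;z)\geq \Hyperg(a,b,c;0)=1$ on $[0,1)$. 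This alone gives the constant part $c=c(N,s,\alpha)>0$ of the claimed bound and trivially covers the endpoint $\alpha=s$, where $b=0$ forces $\Hyperg\equiv 1$ and the second term in the lemma vanishes.

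For the singular part I would invoke Euler's connection formula at $z=1$: since $c-a-b=\alpha-2s$ lies in $(-2s,-s]\subset(-2,0)$ and the hypothesis $\alpha\neq 2s-1$ rules out the only integer in that range, the formula
\[
\Hyperg(a,b,c;z)=A\,H_1(1-z)+B(1-z)^{\alpha-2s}H_2(1-z)
\]
holds with $H_1,H_2$ hypergeometric, analytic at $0$, and $H_i(0)=1$. Using the identity $(s-\alpha)\Gamma(s-\alpha)=\Gamma(1+s-\alpha)$, the coefficient $B=\frac{\Gamma(c)\Gamma(a+b-c)}{\Gamma(a)\Gamma(b)}$ rewrites as
\[
B=\frac{(s-\alpha)\,\Gamma(N/2)\,\Gamma(2s-\alpha)}{\Gamma((N+2s)/2)\,\Gamma(1+s-\alpha)},
\]
visibly $(s-\alpha)$ times a strictly positive constant depending only on $N,s,\alpha$; this is precisely the factor $(s-\alpha)$ appearing in the lemma.

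The conclusion would follow from a routine two-region split. Choose $\delta=\delta(N,s,\alpha)\in(0,1)$ small enough that $|H_1(1-z)|\leq 2$ and $H_2(1-z)\geq 1/2$ whenever $|x|^2\in[1-\delta,1)$; on that annular shell the connection formula gives $\Hyperg\geq\tfrac{B}{2}(1-|x|^2)^{\alpha-2s}-2|A|$, which combined with the uniform bound $\Hyperg\geq 1$ from the second paragraph yields the desired inequality (after a harmless adjustment of $c$). On the complementary region $|x|^2\leq 1-\delta$ the singular term is bounded above by a constant depending on $N,s,\alpha$ and is absorbed into the additive constant. The main technical hazard—and the precise reason for the hypothesis $\alpha\neq 2s-1$—is the third paragraph: at $c-a-b=-1$, Euler's linear transformation degenerates and a logarithmic factor $(1-z)^{-1}\log(1-z)$ enters the boundary expansion, which would force a different form of the singular term.
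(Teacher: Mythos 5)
Your proposal is correct and follows essentially the same route as the paper: Dyda's formula, positivity of the hypergeometric series (which handles the constant term and the case $\alpha=s$), and the Euler connection formula \eqref{prop4} at $z=1$, whose second term carries the coefficient $\frac{\Gamma(N/2)\Gamma(2s-\alpha)}{\Gamma(\frac{N+2s}{2})\Gamma(s-\alpha)}$ and dominates near $\partial B_1$, with $\alpha\neq 2s-1$ excluded for exactly the reason you give. Your explicit extraction of the factor $(s-\alpha)$ via $(s-\alpha)\Gamma(s-\alpha)=\Gamma(1+s-\alpha)$ and the quantitative two-region split merely make precise what the paper's proof states more tersely.
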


\begin{proof}
The left hand side can be written \cite[Theorem 1]{Dyda} as $\Hyperg\left(
        \tfrac{N+2s}{2},s-\alpha,\tfrac{N}{2};|x|^2
    \right)\geq 1$, up to a positive multiple. The asymptotic growth as $|x|\to 1^-$ can be seen from Euler's transform.
\end{proof}

\begin{prop}[Super-solutions]
\label{prop:supersol}
Consider the weights $w_{\mu,\nu;\alpha}>0$ defined in \eqref{eq:weight-rad}. Fix any $\alpha_1\in(0,s)\setminus\set{2s-1}$. Then there exist universal constants $\bar{C}_1=\bar{C}_1(N,s)\gg 1$ and $\bar{c}_2=\bar{c}_2(N,s,\bar{C}_1)\ll 1$ such that for any $\eps>0$ small enough and $r\in(0,1)$,
\[\begin{split}
\cL_\eps w_{N-2s,\frac{N+s}{2s};s}
&\geq \bar{c}_2 w_{N,\frac{N+3s}{2s};0},\\
\cL_\eps w_{N-2s,\frac{N+s/2}{2s};\alpha_1}
&\geq \bar{c}_2 w_{N,\frac{N+5s/2}{2s};-(2s-\alpha_1)}.
\end{split}\]
\end{prop}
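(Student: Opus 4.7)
In both inequalities I split
\[
\cL_\eps w_{N-2s,\nu;\alpha}
= \cL_\eps \phi_{N-2s,\nu}(\,\cdot\,;1/4) + (\bar C_1)^{(\sign\alpha)_+} \cL_\eps(1-|x|^2)_+^\alpha,
\]
with $(\nu,\alpha)=(\tfrac{N+s}{2s},s)$ in the first case and $(\tfrac{N+s/2}{2s},\alpha_1)$ in the second, and $\cL_\eps = \Ds - V_\eps$ where $V_\eps(r)=\tfrac{N}{2s}\kappa_1 \chi^{\frac{2s}{N-2s}}(r) r^{-2s} (\log\tfrac{1}{\eps r})^{-1}(1+o_\eps(1))$ is read off from \eqref{eq:L1}. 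The crucial algebraic observation is that $\nu-\tfrac{N}{2s}\in\set{\tfrac12,\tfrac14}$ is strictly positive by construction: this provides exactly the margin by which the leading term of $\Ds\phi_{N-2s,\nu}$ beats the potential contribution $V_\eps\phi_{N-2s,\nu}$ near the singularity. The torsion piece will then supply positivity in the bulk and, when $\alpha<s$, the prescribed boundary blow-up through Lemma \ref{lem:Ds-torsion}.

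\textbf{Estimate near the singularity, $r<1/8$.} Proposition \ref{prop:loghomo} gives
\[
\Ds\phi_{N-2s,\nu}(r;1/4) = \nu\kappa_1 \phi_{N,\nu+1}(r;1/4) + O(\phi_{N,\nu+2}(r;1/4)),
\]
while a direct computation yields
\[
V_\eps(r)\phi_{N-2s,\nu}(r;1/4)
= \tfrac{N}{2s}\kappa_1\, \tfrac{\log(4/r)}{\log(1/(\eps r))}\, \phi_{N,\nu+1}(r;1/4)(1+o_\eps(1)).
\]
For $\eps\le 1/4$ the ratio $\log(4/r)/\log(1/(\eps r))\le 1$, so subtraction gives
$\cL_\eps\phi_{N-2s,\nu}(r;1/4) \geq (\nu-\tfrac{N}{2s})\kappa_1\phi_{N,\nu+1}(r;1/4) - o_\eps(1)\phi_{N,\nu+1} - O(\phi_{N,\nu+2})$, and for $\eps$ small the last two errors are absorbed into half of the main term. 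The torsion piece contributes at most $\bar C_1 V_\eps \cdot 1 \lesssim \bar C_1 r^{-2s}(\log\tfrac{1}{\eps})^{-1}$, which is of strictly lower order than $\phi_{N,\nu+1}\asymp r^{-N}(\log\tfrac{1}{r})^{-\nu-1}$ and therefore negligible for $\eps$ small (with $\bar C_1$ already chosen).

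\textbf{Estimate away from the singularity, $r\in[1/8,1)$.} Here the $\phi$-contribution to $\cL_\eps w$ is $O((\log\tfrac{1}{\eps})^{-\nu})$ uniformly in $r$ by the second part of Proposition \ref{prop:loghomo}. For the torsion, Lemma \ref{lem:Ds-torsion} yields
\[
\Ds(1-|x|^2)_+^\alpha \geq c + c(s-\alpha)(1-|x|^2)^{-(2s-\alpha)}.
\]
Since $V_\eps\equiv 0$ on $\R^N\setminus B_{1/2}$ and $V_\eps = o_\eps(1)$ uniformly on $[1/8,1/2]$, one obtains $\cL_\eps(1-|x|^2)_+^\alpha \geq \tfrac c2 + c(s-\alpha)(1-|x|^2)^{-(2s-\alpha)}$ for $\eps$ small. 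In the case $\alpha=s$ the singular piece vanishes and the uniform constant $c/2$, amplified by $\bar C_1$, dominates the bounded $\phi$-error, matching the target constant in $w_{N,\nu+1;0}$. In the case $\alpha=\alpha_1<s$, the coefficient $s-\alpha_1>0$ produces exactly the boundary-blow-up term $(1-|x|^2)^{-(2s-\alpha_1)}$ of the target weight $w_{N,\nu+1;-(2s-\alpha_1)}$.

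\textbf{Main obstacle.} The delicate point is ordering the quantifiers. The positive margin $(\nu-\tfrac{N}{2s})\kappa_1$ is a universal number, so one first fixes the universal $\bar c_2$ as a suitable fraction of it (and of $c(s-\alpha_1)$ for the second inequality), then selects $\bar C_1$ large enough to dominate every $O(1)$ error in $[1/8,1)$ and to generate the required boundary amplitude, and finally $\eps_0$ small enough that every $o_\eps(1)$ arising (from the potential $V_\eps$, from the ratio $\log(4/r)/\log(1/(\eps r))$ approaching $1$ as $r\to 0^+$, and from the higher-order logarithmic corrections in Proposition \ref{prop:loghomo}) is at most, say, one tenth of the margin. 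The exclusion $\alpha_1\neq 2s-1$ is inherited from Lemma \ref{lem:Ds-torsion}, where the hypergeometric identity \eqref{prop4} degenerates.
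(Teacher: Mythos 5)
Your proof is correct and follows essentially the same route as the paper's: combine \Cref{prop:loghomo} for the log-polyhomogeneous part with \Cref{lem:Ds-torsion} for the torsion part, then absorb all negative contributions by taking $\bar{C}_1$ large and $\eps$ small, the decisive margin near the singularity being $(\nu-\tfrac{N}{2s})\kappa_1>0$ for $\nu\in\{\tfrac{N+s}{2s},\tfrac{N+s/2}{2s}\}$, which you in fact make more explicit than the paper does. One harmless slip: on $[1/8,1)$ the $\phi$-part of the weight carries the fixed parameter $1/4$, not $\eps$, so its contribution is only $O(1)$ rather than $O((\log\tfrac{1}{\eps})^{-\nu})$ --- but since you absorb that error with $\bar{C}_1 c$ anyway (exactly as in your final quantifier ordering), the argument is unaffected.
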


\begin{proof}
Let $\nu>\frac{N}{2s}$. Using \Cref{prop:loghomo} and \Cref{lem:Ds-torsion}, we have
\[\begin{split}
\cL_\eps w_{N-2s,\nu;s}
&\geq
    \Bigl(
        \nu\kappa_1\phi_{N,\nu+1}
        -C\phi_{N,\nu+2}\oneset{r<1/8}
        -C\oneset{r\geq 1/8}
    \Bigr)
    +\bar{C}_1c,\\
\cL_\eps w_{N-2s,\nu;\alpha_1}
&\geq
    \Bigl(
        \nu\kappa_1\phi_{N,\nu+1}
        -C\phi_{N,\nu+2}\oneset{r<1/8}
        -C\oneset{r\geq 1/8}
    \Bigr)
    +\bar{C}_1c(1-r^2)^{-(2s-\alpha_1)},
\end{split}\]
for any $r\in(0,1)$. For $\bar{C}_1$ large, the negative terms are absorbed. This yields the result.
\end{proof}

\begin{prop}[A priori estimate]
\label{prop:a_priori}
Let $\varphi\in \widetilde{X}$ be a solution \begin{equation*}
\begin{cases}
\cL_\eps\varphi=f
    & \text{ in } B_1\setminus\set{0},\\
\varphi=0
    & \text{ in } \R^N\setminus B_1,
\end{cases}\end{equation*}
for $f\in \widetilde{Y}$. %
Then, for $\bar{c}_2$ is as in \Cref{prop:supersol},
\[
\norm[N-2s,\frac{N+s}{2s};s]{\varphi}
\leq
    (\bar{c}_2)^{-1}\norm[N,\frac{N+3s}{2s};0]{f}.
\]
\end{prop}

\begin{proof}
For any $\delta>0$, define
\[
\tilde{w}^\delta
:=
    (\bar{c}_2)^{-1}
    \left(
        \norm[N,\frac{N+3s}{2s};0]{f}
            w_{N-2s,\frac{N+s}{2s};s}
        +\delta
            w_{N-2s,\frac{N+s/2}{2s};\alpha_1}
    \right),
\qquad \text{ in } B_1\setminus\set{0}.
\]
Then by \Cref{prop:supersol}, we have in $B_1\setminus \set{0}$,
\[\begin{split}
\cL_{\eps} \tilde{w}^\delta
&\geq
    \norm[N,\frac{N+3s}{2s};0]{f}
        w_{N,\frac{N+3s}{2s};0}
    +\delta
        w_{N,\frac{N+5s/2}{2s};-(2s-\alpha_1)}
\geq
    -|f|
    +\delta
        w_{N,\frac{N+5s/2}{2s};-(2s-\alpha_1)}.
\end{split}\]
Thus,
\[
\cL_\eps(\tilde{w}^\delta\pm \varphi)>0
    \qquad \text{ in } B_1\setminus\set{0}.
\]
Moreover, since $\norm[N-2s,\frac{N+s}{2s};s]{\varphi}<+\infty$, $\varphi$ is asymptotically controlled by $w_{N-2s,\frac{N+s/2}{2s};\alpha_1}$ as $r\to 0^+$ and as $r\to 1^-$. In other words, for each $\delta>0$ there exists $\tilde{r}=\tilde{r}\bigl(\delta,\norm[N-2s,\frac{N+s}{2s};s]{\varphi}\bigr)\in(0,\frac12)$ such that
\[
\tilde{w}^\delta\pm\varphi>0
    \quad \text{ in }
    (B_{\tilde r}\setminus\{0\})
    \cup
    (B_1\setminus B_{1-\tilde r}).%
\]
By the maximum principle (\Cref{HsMP}), we conclude that
\[
\tilde{w}^\delta\pm\varphi\geq 0
    \qquad \text{ in } B_1\setminus\set{0}.
\]
Taking $\delta\to 0^+$, we have
\[
\norm[N-2s,\frac{N+s}{2s};s]{\varphi}
\leq
    (\bar{c}_2)^{-1}\norm[N,\frac{N+3s}{2s};0]{f},
\]
as desired.
\end{proof}

Recalling the Green function in \eqref{eq:green_B1},
\[
\bG_{B_1}(x,y)
\asymp
\dfrac{1}{|x-y|^{N-2s}}
\min\left(
    1,
    \dfrac{
        (1-|x|)^{s}
    }{
        |x-y|^{s}
    }
\right)
\min\left(
    1,
    \dfrac{
        (1-|y|)^{s}
    }{
        |x-y|^{s}
    }
\right)
\lesssim
\min\left(
    \dfrac{1}{|x-y|^{N-2s}},
    \dfrac{
        (1-|x|)^{s}
    }{
        |x-y|^{N-s}
    }
\right).
\]
\begin{prop}\label{prop:Green}
The Green operator restricted to
$
\cG:
\widetilde{Y} \to \widetilde{X}
$
is bounded.
\end{prop}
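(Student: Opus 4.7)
The plan is to estimate $\cG[f](x) = \int_{B_1} \bG_{B_1}(x,y)f(y)\,dy$ pointwise by combining the two-sided bound for the Green kernel stated in the text,
\[
\bG_{B_1}(x,y) \lesssim \frac{1}{|x-y|^{N-2s}} \wedge \frac{(1-|x|^2)_+^s (1-|y|^2)_+^s}{|x-y|^N},
\]
with the weighted pointwise bound $|f(y)| \lesssim \norm[\widetilde{Y}]{f}\bigl[\phi_{N,\frac{N+3s}{2s}}(|y|;1/4) + \mathbf{1}_{|y|<1}\bigr]$ coming from membership in $\widetilde{Y}$. Since the target weight $w_{N-2s,\frac{N+s}{2s};s}(x) = \phi_{N-2s,\frac{N+s}{2s}}(|x|;1/4) + \bar{C}_1(1-|x|^2)_+^s$ is the sum of a near-origin and a near-boundary contribution, I would split the analysis into the \emph{inner regime} $|x|\leq 1/8$ and the \emph{outer regime} $|x|>1/8$, and match the two pieces respectively.

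In the inner regime, I would normalise $\norm[\widetilde{Y}]{f}=1$ and decompose $B_1 = A_1 \cup A_2 \cup A_3 \cup A_4$, where
\[
A_1=\{|y|<|x|/2\},\quad A_2=\{|x|/2\leq|y|\leq 2|x|\},\quad A_3=\{2|x|<|y|<1/4\},\quad A_4=\{|y|\geq 1/4\}.
\]
On $A_1$ we have $|x-y|\asymp |x|$, so using $\bG\lesssim |x-y|^{-(N-2s)}$ and the polyhomogeneous bound on $f$ gives
\[
\int_{A_1}\bG\,|f|\,dy \lesssim |x|^{-(N-2s)} \int_0^{|x|/2} \frac{dr}{r\bigl(\log\frac{1}{r}\bigr)^{(N+3s)/(2s)}} \asymp \frac{1}{|x|^{N-2s}\bigl(\log\frac{1}{|x|}\bigr)^{(N+s)/(2s)}},
\]
which is precisely $\phi_{N-2s,\frac{N+s}{2s}}(|x|;1/4)$, the dominant target. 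On $A_2$ we use $|y|\asymp |x|$ together with $\int_{A_2}|x-y|^{-(N-2s)}\,dy \lesssim |x|^{2s}$ to get a contribution of order $|x|^{-(N-2s)}(\log\frac{1}{|x|})^{-(N+3s)/(2s)}$, which is strictly subdominant. On $A_3$, where $|x-y|\asymp |y|$, a direct computation of $\int_{2|x|}^{1/4} r^{-(N+1-2s)}(\log\frac{1}{r})^{-(N+3s)/(2s)}\,dr$ is also concentrated at the lower endpoint and yields the same subdominant rate. Finally, on $A_4$ the bound $|f|\lesssim 1$ lets us absorb $\int_{A_4}\bG\,dy$ into the full torsion integral $\int_{B_1}\bG(x,y)\,dy$, which is bounded for $|x|\leq 1/8$.

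In the outer regime $|x|>1/8$, the key observation is that the torsion function of $B_1$ under $\Ds$ is explicit: by the Dyda formula, $\Ds(1-|x|^2)_+^s$ is a positive constant, so $\int_{B_1}\bG(x,y)\,dy = c_{N,s}(1-|x|^2)_+^s$. I would split the integration by writing $f = f\mathbf{1}_{|y|<1/16} + f\mathbf{1}_{|y|\geq 1/16}$. For the singular part, the distance $|x-y|$ is bounded below by $1/16$, so $\bG(x,y)\lesssim (1-|x|)^s$, and the remaining integral $\int_{|y|<1/16}|f|\,dy$ is a finite multiple of $\norm[\widetilde{Y}]{f}$ thanks to $\frac{N+3s}{2s}>1$. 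For the bounded part, $|f|\lesssim \norm[\widetilde{Y}]{f}$ pointwise and the torsion identity above yields a contribution $\lesssim\norm[\widetilde{Y}]{f}(1-|x|^2)_+^s$. Combining the two regimes gives $|\cG[f](x)|\lesssim \norm[\widetilde{Y}]{f}\cdot w_{N-2s,\frac{N+s}{2s};s}(x)$ on all of $B_1$.

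The main obstacle is the computation on $A_1$: the drop of the logarithmic exponent from $\frac{N+3s}{2s}$ to $\frac{N+s}{2s}$ has to be tracked exactly, and it crucially requires $\frac{N+3s}{2s}>1$ so that the scalar integral $\int_0^a r^{-1}(\log\frac{1}{r})^{-\nu}\,dr$ converges and produces $(\log\frac{1}{a})^{-(\nu-1)}$ by integration by parts. The remaining estimates on $A_2$, $A_3$, $A_4$ and in the outer regime are then strictly subdominant or absorbed by the torsion part of the target weight, and the explicit Dyda identity for $(1-|x|^2)_+^s$ supplies the sharp boundary behaviour needed to close the estimate.
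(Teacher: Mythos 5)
Your argument is correct and delivers exactly the bound $\cG\bigl(w_{N,\frac{N+3s}{2s};0}\bigr)\lesssim w_{N-2s,\frac{N+s}{2s};s}$ that the proposition requires, but it takes a partly different route from the paper. Near the origin the two proofs coincide in substance: your annular decomposition $A_1$--$A_4$ with $\bG\lesssim|x-y|^{-(N-2s)}$, and the key scalar integral $\int_0^a r^{-1}(\log\frac1r)^{-\frac{N+3s}{2s}}dr\asymp(\log\frac1a)^{-\frac{N+s}{2s}}$, is a hands-on version of the paper's computation of $\Ints\bigl(\phi_{N,\frac{N+3s}{2s}}+\mathbf{1}_{B_1}\bigr)\lesssim\phi_{N-2s,\frac{N+s}{2s}}+1$ via the radial kernel $K_{N,-s}$ of \Cref{prop:K}; you correctly track the drop of the logarithmic exponent by one and correctly note that $\frac{N+3s}{2s}>1$ is what makes the singular mass integrable. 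The genuinely different part is the boundary estimate: the paper uses the refined kernel bound $\bG\lesssim(1-|x|)^s|x-y|^{-(N-s)}$ together with a half-order Riesz potential estimate, obtaining $(1-|x|)^s\bigl(|x|^{-(N-s)}+1\bigr)$ globally and then taking the minimum of the two bounds, with no case split in $x$; you instead split at $|x|=1/8$ and, in the outer regime, combine the torsion identity $\int_{B_1}\bG(x,y)\,dy=c(1-|x|^2)_+^s$ (Dyda/Getoor, quoted in the paper) for the bounded part of the datum with the elementary observation that the singular mass sits at distance $\gtrsim1$ from $x$, where $\bG\lesssim(1-|x|)^s$. Your version avoids the $(-\Delta)^{-s/2}$ computation entirely and leans on the explicit torsion function, at the price of a case analysis and of keeping track of the harmless $O(1)$ contributions of the constant part of the weight in $A_1$--$A_3$ (which are indeed absorbed since $w_{N-2s,\frac{N+s}{2s};s}\gtrsim1$ for $|x|\leq1/8$); the paper's version is slightly more uniform but relies on the appendix kernel machinery. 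Either way the conclusion and the sharpness of both the interior logarithmic rate and the boundary decay $(1-|x|^2)^s$ are obtained correctly.
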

\begin{proof}
It suffices to show $\cG\left(w_{N,\frac{N+3s}{2s};0}\right)\lesssim w_{N-2s,\frac{N+s}{2s};s}$. By the definition of $w_{N,\frac{N+3s}{2s};0}$ in \eqref{eq:weight-rad},
\[\begin{split}
\cG\left(
    w_{N,\frac{N+3s}{2s};0}
\right)(x)
&\lesssim
\int_{B_1}
    \min\left(\dfrac{1}{|x-y|^{N-2s}},
    \dfrac{
        (1-|x|)^{s}
    }{
        |x-y|^{N-s}
    }
\right)
        \left(
            \phi_{N,\frac{N+3s}{2s}}(|y|;\tfrac14)
            +1
        \right)
    \,dy.
\end{split}\]
Then, a direct computation using \Cref{prop:K} shows that for $r<1$,
\[\begin{split}
&%
\Ints\left(
    \phi_{N,\frac{N+3s}{2s}}(\cdot;\tfrac14)
    +\mathbf{1}_{B_1}
\right)(r)
\lesssim
    \dfrac{1}{r^{N-2s}}
	\int_{0}^{\infty}
		K_{N,-s}(\rho)
        \left(
            \dfrac{1}{
                (\log\frac{1}{r\rho})^{\frac{N+3s}{2s}}
            }
            +(r\rho)^N
        \right)
        \oneset{r\rho<1}
	\dfrac{d\rho}{\rho}
	\\
&\qquad
\lesssim
    \dfrac{1}{r^{N-2s}}
    \left(
        \int_{0}^{\frac12}
            \dfrac{1}{
                (\log\frac1r
                +\log\frac1\rho)^{\frac{N+3s}{2s}}
            }
        \,{d\left(
            -\log\frac{1}{\rho}
        \right)}
        +1
    \right)
\lesssim
    \phi_{N-2s,\frac{N+s}{2s}}+1.
\end{split}\]
Similarly
\[
(-\Delta)^{-\frac{s}{2}}\left(
    \phi_{N,\frac{N+3s}{2s}}(\cdot;\tfrac14)
    +\mathbf{1}_{B_1}
\right)(r)
\lesssim
    r^{-(N-s)}+1.
\]
Hence the following two inequalities hold,
\[\begin{split}
\cG\left(
    w_{N,\frac{N+3s}{2s};0}
\right)(x)
&\lesssim
    \int_{B_1}
        \dfrac{1}{|x-y|^{N-2s}}
        \left(
            \phi_{N,\frac{N+3s}{2s}}(|y|;\tfrac14)
            +1
        \right)
    \,dy
\lesssim
    \phi_{N-2s,\frac{N+s}{2s}}(|x|;\tfrac14)
    +1
\end{split}\]
\[\begin{split}
\cG\left(
    w_{N,\frac{N+3s}{2s};0}
\right)(x)
&\lesssim
    (1-|x|)^{\frac12}
    \int_{B_1}
        \dfrac{1}{|x-y|^{N-s}}
        \left(
            \phi_{N,\frac{N+3s}{2s}}(|y|;\tfrac14)
            +1
        \right)
    \,dy
\lesssim
    (1-|x|)^{s}
    \left(
        |x|^{-(N-s)}
        +1
    \right).
\end{split}\]
The first one is sharp near the origin and the second near the boundary. Taking the minimum  and recalling \eqref{eq:weight-rad}, the result follows. 
\end{proof}

\begin{prop}[Existence and uniqueness]
\label{prop:exis_uniq}
For any $f\in \widetilde{Y}$, there exists a unique solution $\varphi\in \widetilde{X}$ of
\[\begin{cases}%
\cL_\eps\varphi=f
    & \text{ in } B_1\setminus\set{0},\\
\varphi=0
    & \text{ in } \R^N\setminus B_1,
\end{cases}\]
with
$$
\norm[N-2s,\frac{N+s}{2s};s]{\varphi}
\leq (\bar{c}_2)^{-1}\norm[N,\frac{N+3s}{2s};0]{f}.
$$
In particular, the operator $\cL_{\eps}: Y\to X$ has a uniformly bounded inverse
with its operator norm bounded by
$$
\|\cL_{\eps}^{-1}\|\leq (\bar{c}_2)^{-1}, \quad \forall\eps\in(0,1),
$$
$\bar{c}_2$ is the constant given in \Cref{prop:supersol}.
\end{prop}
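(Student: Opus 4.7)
The plan is to combine the a priori bound of \Cref{prop:a_priori} with the boundedness of the Green operator from \Cref{prop:Green} via the classical method of continuity. Uniqueness and the claimed norm estimate are immediate from \Cref{prop:a_priori}: if $\varphi_1, \varphi_2 \in \widetilde{X}$ both solve the problem, then $\cL_\eps(\varphi_1 - \varphi_2) = 0$ together with \Cref{prop:a_priori} forces $\varphi_1 = \varphi_2$, and applying the estimate to $\varphi$ itself gives $\norm[N-2s,\frac{N+s}{2s};s]{\varphi} \leq (\bar{c}_2)^{-1}\norm[N,\frac{N+3s}{2s};0]{f}$. The entire burden is thus to produce at least one solution for every $f \in \widetilde{Y}$.

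For existence I would interpolate between $\Ds$ and $\cL_\eps$ by introducing
\[
L_t := \Ds - t\, V_\eps,
\qquad
V_\eps := \tfrac{N}{N-2s}(\bar{u}_\eps)^{\frac{2s}{N-2s}},
\qquad t \in [0,1],
\]
so that $L_0 = \Ds$ and $L_1 = \cL_\eps$. The crucial point is that, since $V_\eps \geq 0$ and the super-solutions $w_{N-2s,\frac{N+s}{2s};s}$ and $w_{N-2s,\frac{N+s/2}{2s};\alpha_1}$ of \Cref{prop:supersol} are non-negative, one has $L_t w = \cL_\eps w + (1-t)V_\eps w \geq \cL_\eps w$, so the \emph{same} barriers and the \emph{same} maximum-principle argument as in the proof of \Cref{prop:a_priori} yield the \emph{uniform} bound
\[
\norm[N-2s,\frac{N+s}{2s};s]{\varphi} \leq (\bar{c}_2)^{-1}\norm[N,\frac{N+3s}{2s};0]{L_t \varphi},
\qquad \forall t \in [0,1],\ \varphi \in \widetilde{X}.
\]
At $t = 0$, solvability of $\Ds \varphi = f$ with zero exterior condition is provided by $\varphi = \cG f$, and \Cref{prop:Green} guarantees that $\cG : \widetilde{Y} \to \widetilde{X}$ is bounded. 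The method of continuity then transports surjectivity from $t=0$ to $t=1$, producing $\varphi \in \widetilde{X}$ for every $f \in \widetilde{Y}$; the restriction to $f \in Y$ places $\varphi$ in $X$ by the quantitative bound together with the definitions \eqref{eq:X} of $X$ and $Y$.

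The main obstacle is verifying that the hypotheses of the method of continuity are met in this weighted $L^\infty$ framework. First, multiplication by $V_\eps$ must be bounded $\widetilde{X} \to \widetilde{Y}$, which reduces to the pointwise comparison $V_\eps(r)\, w_{N-2s,\frac{N+s}{2s};s}(r) \lesssim w_{N,\frac{N+3s}{2s};0}(r)$, a direct consequence of the bound $|V_\eps(r)| \lesssim r^{-2s}(\log\frac{1}{\eps r})^{-1}$ on the support of $\chi$ recorded in \eqref{eq:L1} and of the explicit form \eqref{eq:weight-rad} of the weights. Second, the set $\{t \in [0,1] : L_t : \widetilde{X} \to \widetilde{Y} \text{ is invertible}\}$ must be shown to be both open and closed: openness follows from a Neumann series argument based on the uniform estimate and the operator norm of $V_\eps$ just discussed, while closedness follows from a limiting argument in which the integral representation $\varphi = \cG(L_t \varphi) + t\, \cG(V_\eps \varphi)$ together with the uniform a priori bound produces pointwise convergence of an approximating sequence to a solution in $\widetilde{X}$.
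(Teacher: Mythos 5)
Your proposal is correct and follows essentially the same route as the paper's proof: the method of continuity along the family $L_t=\Ds-tV_\eps$, with invertibility at $t=0$ supplied by the Green operator of \Cref{prop:Green}, a uniform-in-$t$ a priori bound coming from the barriers of \Cref{prop:supersol} (valid for all $t\in[0,1]$ since $V_\eps\geq 0$), boundedness of multiplication by $V_\eps$ from $\widetilde{X}$ to $\widetilde{Y}$, and a Neumann-series/fixed-point step of uniform size, with uniqueness and the norm bound read off from \Cref{prop:a_priori}. The only difference is cosmetic: since the contraction step has a uniform radius, the paper simply reaches $t=1$ after finitely many iterations, so your separate closedness argument is superfluous.
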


\begin{proof}
We prove the existence and uniqueness of solution using the method of continuity \cite[Proposition 3.11]{Chan_DelaTorre1} (for a general setting see \cite[Theorem 5.2]{GT}). Indeed, we interpolate between $(-\Delta)^{s}$ and $\cL_\eps$ linearly:
\[
\cL_{\eps}^\lambda
:=\Ds
    -\lambda
    \dfrac{N}{N-2s}
    (\bar{u}_\eps)^{\frac{2s}{N-2s}},
\qquad \lambda\in[0,1].
\]
Inductively, $\cL_\eps^\lambda$ has a bounded inverse $\widetilde{Y}\to\widetilde{X}$ for all $\lambda\in[0,1]$, using \Cref{prop:Green} for $\lambda=0$ and \Cref{prop:a_priori} for $\lambda\in(0,1]$.
\end{proof}

\subsection{The nonlinear equation}

We are now in a position to solve the nonlinear equation \eqref{eq:fixed-point-radial}, 
where the inverse linear operator
$\cL^{-1}_{\eps}:Y\to X$ exists and is uniformly bounded in view of \Cref{prop:exis_uniq}.

\begin{prop}[Contraction]
\label{prop:G-new}
For $0<\eps\ll 1$, %
$G_\eps:X\to X$ is a contraction.%
\end{prop}

\begin{proof}
We compute, using mean value theorem twice, %
\[\begin{split}
&\quad\;
\bigl|
    \cN[\varphi]-\cN[\tilde\varphi]
\bigr|
=
\abs{
    |\bar{u}_\eps+\varphi|^{\frac{N}{N-2s}}
    -|\bar{u}_\eps+\tilde\varphi|^{\frac{N}{N-2s}}
    -\frac{N}{N-2s}(\bar{u}_{\eps})^{\frac{2s}{N-2s}}(\varphi-\tilde{\varphi})
}\\
&\leq
    \dfrac{2sN}{(N-2s)^2}
    \int_{0}^{1}\int_{0}^{1}
        \abs{
            \bar{u}_\eps
            +\tau(1-t)\varphi
            -\tau t\tilde{\varphi}
        }^{\frac{N}{N-2s}-2}
    \,dt\,d\tau
    \cdot
    \left(
        |\varphi|
        +|\tilde{\varphi}|
    \right)
    \abs{
        \varphi-\tilde{\varphi}
    }\\
&\leq
    C\left(
        w_{N-2s,\frac{N-2s}{2s};s}
    \right)^{\frac{N}{N-2s}-2}
    \left(
        2\bar{C}_2\bar{C}_3
        |\log\eps|^{-\min(\frac12,\frac{N-2s}{2s})}
    \right)
    \left(
        w_{N-2s,\frac{N+s}{2s};s}
    \right)^2
    \norm[N-2s,\frac{N+s}{2s};s]{
        \varphi-\tilde{\varphi}
    }\\
&\leq
    C\bigl(
        N,s,\bar{C}_2,\bar{C}_3
    \bigr)
    |\log\eps|^{-\min(\frac12,\frac{N-2s}{2s})}
    w_{N,\frac{N+6s}{2s};\frac{N}{N-2s}s}
    \norm[N-2s,\frac{N+s}{2s};s]{\varphi-\tilde\varphi}\\
&\leq
    C\bigl(
        N,s,\bar{C}_1,\bar{C}_2,\bar{C}_3
    \bigr)
    |\log\eps|^{-\min(\frac12,\frac{N-2s}{2s})-\frac32}
    w_{N,\frac{N+3s}{2s};0}
    \norm[N-2s,\frac{N+s}{2s};s]{\varphi-\tilde\varphi}.
\end{split}\]
As a result of \Cref{prop:exis_uniq},
\[
\bigl|
    G_\eps[\varphi]-G_\eps[\tilde\varphi]
\bigr|
\leq
    C\bigl(
        N,s,\bar{C}_1,\bar{C}_2,\bar{C}_3
    \bigr)
    |\log\eps|^{-\min(\frac12,\frac{N-2s}{2s})-\frac32}
    w_{N-2s,\frac{N+s}{2s};s}
    \norm[N-2s,\frac{N+s}{2s};s]{\varphi-\tilde\varphi},
\]
showing that $G_\eps$ is contractive for $\eps\ll 1$. Putting $\tilde\varphi=0$ also shows $G_\eps$ maps $X$ to $X$, as desired.
\end{proof}

\subsection{Local regularity}

We have the following \emph{a priori} estimate.

\begin{lem}%
\label{lem:reg}
Suppose $u\in C^2(B_1\setminus \set{0}) \cap C^{1/2}(\R^N \setminus \set{0})$ is a solution to
\[\begin{cases}
\Ds u=|u|^{\frac{N}{N-2s}}
    & \text{ in } B_1 \setminus\set{0},\\
u=0
    & \text{ in } \R^N \setminus B_1,
\end{cases}\]
such that
\[
u(x) \leq C_0\phi_{N-2s,\frac{N-2s}{2s}}(|x|;1/2)
    \quad \text{ in } B_{1}.
\]
Then $u\in C^\infty(B_1\setminus \set{0})$. Moreover, for any $\beta>0$ and $x\in B_{1/8} \setminus\set{0}$, there is a constant $C>0$ depending only on $N,s,\beta$ such that
\[
\seminorm[C^{\beta}(B_{|x|/4}(x))]{u}
\leq C\phi_{N-2s+\beta,\frac{N-2s}{2s}}(x).
\]
\end{lem}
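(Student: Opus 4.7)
The plan is to reduce the H\"older bound at a point $x_0\in B_{1/8}\setminus\set{0}$ to a uniform regularity estimate for a rescaled equation on a unit-size ball, and then bootstrap using the Ros-Oton--Serra estimates of \Cref{lem:RSreg}. Set $r=|x_0|$, $\rho=r/4$, and define the rescaling
\[
v(y):=\rho^{N-2s}\left(\log\tfrac{4}{\rho}\right)^{\frac{N-2s}{2s}}u(x_0+\rho y),\qquad y\in\R^N.
\]
Since $x_0+\rho y$ lies in the annulus $B_{3r/2}\setminus B_{r/2}\subset B_{1/4}$ for $y\in B_2$, the hypothesis $u\leq \phi_{N-2s,\frac{N-2s}{2s}}$ gives $\norm[L^\infty(B_2)]{v}\leq C$ with $C$ universal. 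Using the scaling $(\Ds u)(x_0+\rho y)=\rho^{-2s}\Ds v(y)$ together with the identity $p-1=\frac{2s}{N-2s}$ for $p=\frac{N}{N-2s}$, the equation becomes
\[
\Ds v=\frac{v^{p}}{\log(4/\rho)}=:h(y)\qquad\textin B_2,
\]
so $\norm[L^\infty(B_2)]{h}\leq C/\log(4/\rho)\leq C$ for $\rho$ small.

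Next I will verify the tail bound $\norm[L^1_{2s}(\R^N)]{v}\leq C$ uniformly in $\rho$. A change of variables $z=x_0+\rho y$ gives
\[
\norm[L^1_{2s}(\R^N)]{v}
=\rho^{N}\left(\log\tfrac{4}{\rho}\right)^{\frac{N-2s}{2s}}
\int_{B_1}\frac{u(z)}{(\rho+|z-x_0|)^{N+2s}}\,dz,
\]
where I have used $u\equiv 0$ outside $B_1$. Splitting into $\set{|z-x_0|\leq \rho}$, where one uses $u(z)\lesssim r^{-(N-2s)}(\log(4/r))^{-\frac{N-2s}{2s}}$ on that small ball, and $\set{|z-x_0|>\rho}\cap B_1$, where the pointwise bound from $\phi_{N-2s,\frac{N-2s}{2s}}$ is integrated against the decaying denominator, both contributions give a universal constant after cancellation with the prefactor $\rho^{N}(\log(4/\rho))^{\frac{N-2s}{2s}}$.

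With these three uniform bounds---$\norm[L^\infty(B_2)]{v}$, $\norm[L^\infty(B_2)]{h}$, and $\norm[L^1_{2s}]{v}$---I can invoke the first part of \Cref{lem:RSreg} (rescaled from $B_1$ to $B_2$) to obtain $\norm[C^\beta(\overline{B_{1/2}})]{v}\leq C(\beta)$ for every $\beta\in(0,2s)$ not an integer. Since $u>0$ in $B_1\setminus\set{0}$ by the strong maximum principle, so is $v$ on a neighborhood of any point in $B_{1/2}$, and the composition $v\mapsto v^p$ preserves H\"older regularity on such a set (the map $t\mapsto t^p$ is $C^\infty$ on $(0,\infty)$). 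A standard Schauder bootstrap using the second part of \Cref{lem:RSreg} then yields $v\in C^\infty(\overline{B_{1/2}})$ with norms controlled uniformly in $\rho$, which upon unscaling gives $u\in C^\infty(B_1\setminus\set{0})$. Finally, the definition of $v$ implies
\[
\seminorm[C^\beta(B_{|x_0|/4}(x_0))]{u}
=\rho^{-(N-2s+\beta)}\left(\log\tfrac{4}{\rho}\right)^{-\frac{N-2s}{2s}}\seminorm[C^\beta(B_{1/2})]{v}
\lesssim_\beta \phi_{N-2s+\beta,\frac{N-2s}{2s}}(x_0),
\]
which is the claim. The main technical obstacle is the tail bound, because the logarithmic weight in the singularity is borderline and the rescaling factor $\rho^N(\log(4/\rho))^{\frac{N-2s}{2s}}$ must precisely compensate the integral over $B_1$; once this is done correctly the remaining steps are standard Schauder-type bootstrapping.
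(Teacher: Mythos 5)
Your argument is essentially the paper's proof: rescale around $x$ to unit scale, check uniform $L^\infty$, right-hand side and $L^1_{2s}$ tail bounds (the tail computation you sketch is exactly the splitting carried out in the paper), apply \Cref{lem:RSreg} and bootstrap using $\frac{N}{N-2s}>1$, then unscale to read off the weighted $C^\beta$ seminorm. The only difference is cosmetic: you absorb the logarithmic weight into the rescaling, so the equation picks up the harmless factor $1/\log(4/\rho)$ and all bounds become $O(1)$, whereas the paper uses the pure power rescaling $v(y)=(|x|/2)^{N-2s}u\left(x+\frac{|x|}{2}y\right)$, which preserves the equation exactly and carries the factor $\left(\log\frac{1}{|x|}\right)^{-\frac{N-2s}{2s}}$ through every estimate.
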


\begin{proof}
Fix $x\in B_{1/8} \setminus \set{0}$. Then
$
v(y)=
\left(\frac{|x|}{2}\right)^{N-2s}
    u\left(x+\frac{|x|}{2}y\right)
$
satisfies the equation
$\Ds v=|v|^{\frac{N}{N-2s}}$ in $B_1$ and the bound
$\norm[L^\infty(B_1)]{v} \leq C(\log\frac{1}{|x|})^{-\frac{N-2s}{2s}}$. Moreover,
by the change of variable $z=x+\frac{|x|}{2}y$ and the inequality $|x|+|z-x|\geq \frac{1}{4}(|x|+|z|)$,
\begin{align*}
\norm[L^1_{2s}(\R^N)]{v}
&=\int_{\R^N}
	\frac{1}{(1+|y|)^{N+2s}}
	|x|^{N-2s}
	u\left(
		x+\frac{|x|}{2}y
	\right)
\,dy
=\int_{B_1}
	\frac{|x|^{N+2s}}{(|x|+2|z-x|)^{N+2s}}
	|x|^{-2s}
	u(z)
\,dz\\
&\leq 
C\left(
\int_{0}^{|x|}
	|x|^{-2s}
	\frac{\rho^{2s-1}}{(\log\frac{2}{\rho})^{\frac{N-2s}{2s}}}
\,d\rho
+\int_{|x|}^{1}
	\frac{|x|^N}{\rho^{N+2s}}
	\frac{\rho^{2s-1}}{(\log\frac{2}{\rho})^{\frac{N-2s}{2s}}}
\,d\rho
\right)
\leq
	\frac{C}{(\log\frac{1}{|x|})^{\frac{N-2s}{2s}}}.
\end{align*}
Bootstrapping using \cite[Corollary 2.4 and 2.5]{RS1} and a standard covering argument, the result follows.
\end{proof}

\subsection{Proof of \Cref{thm:rad-s}} %
By \Cref{prop:G-new}, \eqref{eq:fixed-point-radial} has a fixed point $\varphi$, giving a singular solution $u_\eps=\bar{u}_\eps+\varphi$ to \eqref{eq:main-abs-radial}, which is regular by \Cref{lem:reg} and positive near the origin. By $s$-superharmonicity, it is also positive in $B_1$, as desired.

\subsection{Integral identities}

By equating the constants from \Cref{thm:behavior-rad} and \Cref{thm:rad-s}, we have:

\begin{lem}\label{lem:const-int}
Let $N\geq 2$ and $s\in(0,1)$. Then,
\begin{align*}
\kappa_1
=\int_0^{\infty}
	K_{N,s}(\rho)
	\left(\log\frac{1}{\rho}\right)
\,d\rho
&=\frac{1}{C_{N,-s}\cH^{N-1}(\bS^{N-1})},
\\
\kappa_2
=\int_0^{\infty}
	K_{N,s}(\rho)
	\left(\log\frac{1}{\rho}\right)^2
\,d\rho
&=\frac{
	2s\cH^{N-2}(\bS^{N-2})
}{
	C_{N,-s}(\cH^{N-1}(\bS^{N-1}))^2
}
\int_{1}^{\infty}
\int_{0}^{\pi}
	\frac{
		\sin^{N-2}\theta\,d\theta
	}{
		(1+\sigma^2-2\sigma\cos\theta)^{\frac{N-2s}{2}}
	}
\,\dfrac{d\sigma}{\sigma}.
\end{align*}
\end{lem}

\begin{proof}
From \eqref{eq:c0-explicit}, \Cref{rmk:c1} and \eqref{eq:c0}--\eqref{eq:c1}, we have
\begin{align*}
c_0
&=\left(
	\frac{N-2s}{2s\kappa}
\right)^{\frac{N-2s}{2s}}
=\left(
	\frac{N-2s}{2s}
	\kappa_1
\right)^{\frac{N-2s}{2s}}
	\quad \implies \quad
\kappa_1=\frac{1}{\kappa},\\
c_1
&=-\frac{(N-2s)N}{8s^2}\frac{\kappa_2}{\kappa_1}c_0
=-\frac{N}{2s}C_3c_0^{\frac{N}{N-2s}}
	\quad \implies \quad
\kappa_2=\frac{2sC_3}{\kappa^2}.
\end{align*}
It suffices to recall the definitions of $\kappa$ from \Cref{lem:eq-v-4-bar} and $C_3$ from \eqref{eq:C3} ($C_1=C_{N,-s}\cH^{N-2}(\bS^{N-2})$).
\end{proof}

\section{Singular Yamabe metrics: Proof of \Cref{th:Yamabe}}
\label{sec:Yamabe}

\subsection{Fermi coordinates}
Let us assume, without lose of generality, that our singularity satisfies $\Sigma\subset B^n_1(0)$. We write the ambient dimension as $n=k+N$, where $k$ and $N$ are respectively the dimensions of the submanifold $\Sigma$
and of the normal space $N_y\Sigma$ at any point $y\in\Sigma$. The Fermi coordinates are well-defined on some tubular neighborhood $\cT_{4\tau}$ of $\Sigma^k\subset\R^n$ of universally small width $4\tau\ll 1$ (see \Cref{prop:Ints_nN}). In fact, any point $z\in\R^n$ with $\dist(z,\Sigma)<4\tau$ can be written as
\begin{equation}\label{eq:Fermi-1}
z=y+\sum_{j=1}^{N}x_j\nu_j(y),
\end{equation}
where $y\in\Sigma^k$ and $(\nu_1(y),\dots,\nu_j(y))$ is a basis for the normal space $N_y\Sigma$ at $y$, and $x=(x_1,\dots,x_N)\in\R^N$ are the coordinates on $N_y\Sigma$. Using polar coordinates in $\R^N$, we set
\begin{equation}\label{eq:Fermi-2}
r=|x|\in[0,4\tau)
	\quad \textand \quad
\omega=\frac{x}{|x|}\in\bS^{N-1}.
\end{equation}
Thus \eqref{eq:Fermi-1}--\eqref{eq:Fermi-2} define a diffeomorphism
\begin{equation*}
\begin{split}
\Phi:(0,4\tau)\times \bS^{N-1} \times \Sigma^k
	& \to
\cT_{4\tau}\setminus\Sigma\subset\R^n \\
\Phi(r,\omega,y)
	&
=y+\sum_{j=1}^{N}r\omega_j\nu_j(y).
\end{split}
\end{equation*}
The associated metric $g(r,\omega,y)$ is well-known (see \cite{FmO,MS,MP}), given by
\[\begin{split}
(g_{ij})
&=\begin{pmatrix}
1 & 0 & O(r) \\
0 & r^2 g_{\bS^{N-1},i'j'}(\omega)+O(r^4) & O(r^2) \\
O(r) & O(r^2) & g_{\Sigma,i''j''}(y)+O(r).
\end{pmatrix},
\end{split}\]
where $O(r^\ell),\ \ell=1,2,4$ are uniformly small as $r\searrow 0$, together with all derivatives with respect to the vector fields $r\p_{r}$, $\p_{\omega_{i'}}$, $\p_{y_{i''}}$. (Here $i,j=1,\dots,n$, $i',j'=1,\dots,N-1$, $i'',j''=1,\dots,k$.)

\subsection{General strategy} %

We take $u_{\eps}(r)=
		\dfrac{c_0+o(1)}{r^{N-1}(\log\frac{1}{\eps r})^{N-1}}$
as given in \Cref{thm:rad-s} with $s=\frac12$ to form the \emph{Ansatz} in $\R^n\setminus\Sigma$ by simply gluing it to zero away from $\Sigma$, namely
\begin{equation}\label{eq:v_eps}
\bar{v}_{\eps}(r,\omega,y)
:=\bar{v}_{\eps}(r)
=u_{\eps}(r)\chi_{\tau}(r),
\quad
\chi_{\tau}(r)=
\begin{dcases}
1, &  \textif {r\leq \tau},\\
0, &  \textif {r\geq 2\tau}.
\end{dcases}%
\end{equation}
with $\tau |D\chi_\tau| + \tau^2 |D^2\chi_\tau| \leq C$, where $r\in(0,4\tau),\  \omega\in\bS^{N-1}, \ y\in \Sigma^k$. We look for a perturbation $\psi$ so that $v_\eps=\bar{v}_\eps+\psi$ solves
\begin{equation}\label{eq:th2}
\Dhn v=|v|^{\frac{n+1}{n-1}}
    \qquad \text{ in } \R^n\setminus\Sigma.
\end{equation}
This means that $\psi$ solves the linearized equation (in fixed-point form)
\begin{equation}\label{eq:NL2}
\psi
=\sG_\eps[\psi]
:=\sL_\eps^{-1}(-\sE_\eps+\sN[\psi])
    \quad \text{ in } \R^n\setminus\Sigma,
\end{equation}
where
\begin{equation}\label{eq:L2}
\mathscr L_\eps\psi:=\Dhn\psi
    -\frac{N}{N-1}
        (\bar{v}_{\eps})^{\frac{1}{N-1}}
        \psi,
\end{equation}
is shown to be invertible in \Cref{prop:lin-Yamabe}, and
\begin{equation}\label{eq:E2N2}
\mathscr E_{\eps}
:=\Dhn \bar{v}_\eps -\bar{v}_\eps^{\frac{N}{N-1}},
	\qquad
	\mathscr N[\psi]:=
    |\bar{v}_\eps+\psi|^{\frac{N}{N-1}}
	-(\bar{v}_\eps)^{\frac{N}{N-1}}
	-\dfrac{N}{N-1}
	(\bar{v}_\eps)^{\frac{1}{N-1}}
	\psi.
\end{equation}
Different from \eqref{eq:fixed-point-radial}, we emphasize that \eqref{eq:NL2} is to be solved in the \emph{whole space}. %
For $\mu\in[N-2,N)$ and $\nu\in[n-\frac32,n+1]$, define
\begin{equation*}%
\tilde{\omega}_{\mu;\nu}(z)
:=d_\Sigma(z)^{-\mu}
    \oneset{z\in\cT_{3\tau}\setminus\Sigma}
+d_\Sigma(z)^{-\nu}
    \oneset{z\in\R^n\setminus\cT_{3\tau}},
\qquad z\in \R^n\setminus\Sigma.
\end{equation*}

Up to a translation, we assume that $0\in \Sigma$. This implies that, in $\R^n\setminus\cT_{3\tau}$, $d_\Sigma(z)$ is comparable to $|z|$ (see \Cref{lem:distances}), and $\tilde{\omega}_{\mu;\nu}(z)$ is comparable to the more convenient
\begin{equation}\label{eq:weight-Yamabe}
\omega_{\mu;\nu}(z)
:=d_\Sigma(z)^{-\mu}
    {\bf 1}_{\cT_{3\tau}\setminus\Sigma}(z)
+|z|^{-\nu}
    {\bf 1}_{\R^n\setminus\cT_{3\tau}}(z),
\qquad z\in \R^n\setminus\Sigma.
\end{equation}
We collect the important choices of the parameters:
\begin{itemize}
\item $\omega_{N-1;n+1}$ controls the error $\sE_\eps$ (\Cref{lem:Error estimates});
\item $\omega_{N-2;n-1}$ represents the order of the perturbation $\psi$ (\Cref{prop:lin-Yamabe});
\item $\omega_{N-\frac32;n-\frac32}$ is an $\sL_\eps$-superharmonic function which is more singular than $\omega_{N-2;n-1}$ near the singularity, as required by the maximum principle (\Cref{lem:apriori-yamabe}). %
\end{itemize}
We denote the corresponding norms for $L^\infty_\loc(\R^n\setminus\Sigma)$-functions by
\[
\norm[\mu;\nu]{v}
:=\norm[L^\infty(\R^n\setminus\Sigma)]{
    \omega_{\mu;\nu}^{-1}v
}.
\]
Then the error $\sE_\eps$ lies in the space
\[
\sY:=\set{
    g\in L^\infty_\loc(\R^n\setminus\Sigma):
    \norm[N-1;n+1]{g}
    \leq 2\tilde{C}_1|\log\eps|^{-(N-1)}
},
\]
for some $\tilde{C}_1>0$ (given in \Cref{lem:Error estimates}). We look for $\psi$ in the Banach space
\begin{equation*}
\sX:=\set{
    \psi\in L^\infty_\loc(\R^n\setminus\Sigma):
    \norm[N-2;n-1]{\psi}
    \leq 2\tilde{C}_1\tilde{C}_2|\log\eps|^{-(N-1)}
},
\end{equation*}
where $\tilde{C}_2>0$ is determined by the %
operator norm of $\sL_\eps^{-1}: \widetilde{\sY} \to \widetilde{\sX}$ (see  \Cref{prop:lin-Yamabe}), where
\[
\widetilde{\sY}
:=\set{
    g\in L^\infty_\loc(\R^n\setminus\Sigma):
    \norm[N-1;n+1]{g}
    <+\infty
},
\]
\[
\widetilde{\sX}
:=\set{
    \psi\in L^\infty_\loc(\R^n\setminus\Sigma):
    \norm[N-2;n-1]{\psi}
    <+\infty
}.
\]
Once the linear theory is established, we apply the contraction mapping principle in $\sX$ (\Cref{prop:G-Yamabe}).

\subsection{Nonlocal computations}

\begin{lem}[Comparison of distances]
\label{lem:distances}
Let $z\in \R^n\setminus \cT_{3\tau}$. Then
\[
\min\left(\frac12, \frac{3\tau}{2\diam\Sigma}\right) |z| \leq d_\Sigma(z)\leq |z|,
\qquad \text{ and } \quad
3\tau \leq |z| \leq 2\diam\Sigma,
\text{ if $z\in \cT_{4\tau}\setminus \cT_{3\tau}$}.
\]
\end{lem}

\begin{prop}\label{prop:Dh_nN}
Suppose $v\in C^{1,1}_{\loc}(B^N_{2\tau}\setminus \set{0})$ is supported on $B^N_{2\tau}\setminus \set{0}$. In Fermi coordinates \eqref{eq:Fermi-1},
consider a $C^{1,1}_{\loc}$ function $\bar{v}:\R^n\setminus\Sigma\to\R$ defined by
\[
\bar{v}(z)
=\begin{cases}
v(x)
    & \textfor z \in \cT_{2\tau}\setminus\Sigma,\\
0
    & \textfor z \in \R^n\setminus \cT_{2\tau}.
\end{cases}
\]
Then for any $z=z(x,y)\in\cT_{3\tau}$,
\[\begin{split}
\Dsn \bar{v}(z)
&=(1+O(|x|))\DsN v(x)
	+O(|v(x)|)
	+O\left(
		|x|^{2-2s}\norm[L^\infty(B^N_{|x|/2}(x))]{Dv}
	\right)
\\
&\quad\;
	+O\left(
		|x|^{-N+1-2s}\norm[L^1(B^N_{2|x|})]{v}
	\right)
	+O\left(
		\norm[L^1(B^N_{2\tau}\setminus B^N_{2|x|})]{|\cdot|^{-(N-1+2s)}v}
	\right),
\end{split}\]
where the implicit constants depend only on $n,N,s,\tau$, and for $z\in \R^n\setminus \cT_{3\tau}$,
\[  | \Dsn \bar{v}(z)|
\leq \frac{C_{n,s}\norm[L^1(\cT_{2\tau}\setminus\Sigma)]{v}}{(\mathrm{dist}_{\Sigma}(z))^{n+2s}}.
\]
\end{prop}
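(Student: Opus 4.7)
The plan is to split the integral in $\Dsn \bar v(z)$ according to the position of $w \in \R^n$ and use the Fermi coordinates from \eqref{eq:Fermi-3} to reduce the $n$-dimensional singular integral to an $N$-dimensional one, via the identity
\[
\int_{\R^k}\frac{du}{(\rho^2+|u|^2)^{(n+2s)/2}}=\frac{\pi^{k/2}\,\Gamma(\frac{N+2s}{2})}{\Gamma(\frac{n+2s}{2})}\,\rho^{-(N+2s)}.
\]

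The case $z \in \R^n\setminus \cT_{3\tau}$ is immediate: since $\bar v(z)=0$ and $\bar v$ is supported in $\cT_{2\tau}\setminus\Sigma$, while $|z-w|\ge d_\Sigma(z)-2\tau \ge d_\Sigma(z)/3$ for $w\in\cT_{2\tau}$, we have
\[
\Dsn\bar v(z)=-C_{n,s}\int_{\cT_{2\tau}\setminus\Sigma}\frac{\bar v(w)}{|z-w|^{n+2s}}\,dw,
\]
which is bounded by $C_{n,s}\,3^{n+2s}\,d_\Sigma(z)^{-(n+2s)}\|v\|_{L^1(\cT_{2\tau}\setminus\Sigma)}$, yielding the second estimate.

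For $z=\Phi(y,x)\in\cT_{3\tau}$ I decompose $\Dsn\bar v(z)=I_1+I_2+I_3$, where $I_1$ is the principal value over $\{w=\Phi(y',x')\in \cT_{4\tau}:\,d_\Sigma(y,y')\le\delta\}$ for an intermediate $\delta\in(|x|,\tau)$ to be optimised, $I_2$ is the remaining contribution from $\cT_{4\tau}$, and $I_3$ the contribution from $\R^n\setminus\cT_{4\tau}$. The last is bounded trivially by $|v(x)|\tau^{-2s}$ using $|z-w|\geq \tau$ there. For $I_1$, the expansions of the metric and volume recorded in Section~\ref{Sec:sing} give $dw=(1+O(|x|+|x'|))\,dx'\,dy'$ and
\[
|z-w|^2 = |x-x'|^2 + d_\Sigma(y,y')^2\bigl(1+O(|x|+|x'|)\bigr),
\]
so integrating in $y'$ first and extending the slab to $\R^k$ reconstructs $(1+O(|x|))\,\DsN v(x)$ (the constant matches thanks to $C_{n,s}\cdot\pi^{k/2}\Gamma(\tfrac{N+2s}{2})/\Gamma(\tfrac{n+2s}{2}) = C_{N,s}$), while the slab--versus--$\R^k$ discrepancy produces an error of order $\delta^{-(N+2s)}\bigl(|v(x)|+\|v\|_{L^1(B_{2\tau})}\bigr)$. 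Close to the diagonal $x'=x$ the principal value is handled by Taylor expansion: the linear piece $(x'-x)\cdot\nabla v(x)$ integrates to zero against the symmetric leading kernel, leaving a remainder controlled by $\delta^{2-2s}\,\|Dv\|_{L^\infty(B_{|x|/2}(x))}$. A similar crude estimate bounds $I_2$ by $\delta^{-(N+2s)}\bigl(|v(x)|+\|v\|_{L^1(B_{2\tau})}\bigr)$. Balancing the $\delta^{-(N+2s)}$ and $\delta^{2-2s}/|x|$ contributions by choosing $\delta\asymp|x|^{1/(N+1)}$ (modulo $\tau$-factors absorbed into the $\tau^{-(n+2s)}$ prefactor) produces the claimed exponent $|x|^{-1+(2-2s)/(N+1)}$.

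The main technical obstacle will be the careful propagation of the Fermi-coordinate metric corrections through the principal value: the off-diagonal entries $O(r)$ and $O(r^2)$ of the metric produce cross terms in $|z-w|^2$ that must be absorbed into the $(1+O(|x|))$ multiplicative correction without spoiling the cancellation of the linear Taylor term, and the various boundary-scale contributions have to be simultaneously gathered under a single $\tau$-power and a single $|x|$-power.
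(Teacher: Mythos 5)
Your skeleton (Fermi coordinates, the identity $C_{n,s}\int_{\R^{k}}(\rho^2+|u|^2)^{-\frac{n+2s}{2}}\,du=C_{N,s}\rho^{-(N+2s)}$, and the trivial exterior bound, which is exactly the paper's) is fine, but the interior error analysis has a genuine gap. The paper expands $\frac{d\bar z}{|z-\bar z|^{n+2s}}=\frac{1+O(|x|)+O(\rho)}{\rho^{n+2s}}\,d\bar x\,d\bar y$ with $\rho^2=|x-\bar x|^2+|\bar y|^2$, and only the $O(|x|)$ part can be absorbed multiplicatively into $(1+O(|x|))\DsN v(x)$; the $O(\rho)$ part must be kept as an \emph{additive} error, because on most of the tube $|\bar x|$ and $|\bar y|$ are of size up to $2\tau$ or $\diam\Sigma$, far larger than $|x|$. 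Your plan to push all metric corrections into a single multiplicative $(1+O(|x|))$ factor — even after restricting to a tangential slab $d_\Sigma(y,y')\le\delta$ with $\delta>|x|$ — fails for the same reason: inside the slab the correction is $O(|x|+|x'|+\delta)$, and $|x'|$ still ranges up to $2\tau$. After integrating the $O(\rho)$ correction in $\bar y$ one is left with the lower-order singular integral $\int_{|\bar x|\le 4\tau}|v(x)-v(\bar x)|\,|x-\bar x|^{-(N-1+2s)}\,d\bar x$, and the exponent $-1+\frac{2-2s}{N+1}$ in the statement comes precisely from splitting \emph{this} integral in the normal variable at radius $|x|^{\beta}$: the near part is $\lesssim |x|^{(2-2s)\beta}\norm[L^\infty]{Dv}$, the far part is $\lesssim |x|^{-(N-1+2s)\beta}\bigl(\tau^{N-1}|v(x)|+\norm[L^1(B_{2\tau})]{v}\bigr)$, and $(2-2s)\beta-1=-(N-1+2s)\beta$ forces $\beta=\frac{1}{N+1}$. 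No symmetry cancellation of a linear Taylor term is needed anywhere: the reduced leading kernel reproduces $\DsN v(x)$ directly (the principal value makes sense since $v\in C^{1,1}_{\loc}$), and the error kernel of order $N-1+2s$ paired with the Lipschitz bound is locally integrable because $2-2s>0$.

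Moreover, your own bookkeeping does not close. Balancing $\delta^{-(N+2s)}$ against $\delta^{2-2s}/|x|$ gives $\delta\asymp|x|^{1/(N+2)}$, not $|x|^{1/(N+1)}$; and with $\delta\asymp|x|^{1/(N+1)}$ the slab-truncation term you must retain, $\delta^{-(N+2s)}\norm[L^1(B_{2\tau})]{v}$ (from points $\bar z$ with $d_\Sigma(y,\bar y)>\delta$ but $\bar x$ in the support of $v$ near $x$, where only $L^1$ control is available), is of size $|x|^{-\frac{N+2s}{N+1}}\norm[L^1(B_{2\tau})]{v}$, exceeding the allowed $|x|^{-1+\frac{2-2s}{N+1}}\norm[L^1(B_{2\tau})]{v}=|x|^{-\frac{N-1+2s}{N+1}}\norm[L^1(B_{2\tau})]{v}$ by the factor $|x|^{-\frac{1}{N+1}}$. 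In short, the tangential cut at scale $\delta$ is both unnecessary (the paper integrates over the full $\bar y$ range, paying only $\tau^{-(n+2s)}$-type errors) and too costly; the two-scale optimization has to be performed in $\bar x$ on the additive order-$(2s-1)$ error kernel, which is the step missing from your argument.
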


\begin{proof}
We denote the dummy variables with a bar. %
We fix coordinates centered at $y\in\Sigma$. Taking $
\rho=\sqrt{|x-\bar{x}|^2+|\bar{y}|^2}
$, we have for $\rho<\tau$,
\begin{equation}\label{eq:jac}
\dfrac{1}{
    |z-\bar{z}|^{n+2s}
}
\,d\bar{z}
=\dfrac{
    1+O(|\bar{x}|)+O(|x-\bar{x}|)+O(|\bar{y}|)
}{
    \rho^{n+2s}
}
\,d\bar{x}\,d\bar{y}
=\dfrac{
    1+O(|x|)+O(\rho)
}{
    \rho^{n+2s}
}
\,d\bar{x}\,d\bar{y}.
\end{equation}
Then, using the facts that
\[
C_{n,s}\int_{\R^{n-N}}
    \dfrac{
        d\bar{y}
    }{
        \left(
            |x-\bar{x}|^2+|\bar{y}|^2
        \right)^{\frac{n+2s}{2}}
    }
=
    \dfrac{
        C_{N,s}
    }{
        |x-\bar{x}|^{N+2s}
    },
\]
\[
C_{n,s}\int_{\R^{n-N}}
    \dfrac{
        d\bar{y}
    }{
        \left(
            |x-\bar{x}|^2+|\bar{y}|^2
        \right)^{\frac{n-1+2s}{2}}
    }
=
\dfrac{
    C(n,N,s)
}{
    |x-\bar{x}|^{N-1+2s}
},
\]
we compute for $z=(x,0)\in\cT_{3\tau}$, i.e. when $|x|=r<3\tau$, %
\[\begin{split}
\Dsn \bar{v}(z)
&=
    C_{n,s}\iint_{\substack{
        |\bar{x}|\leq 4\tau\\
        |\bar{y}|\leq \tau
    }}
        \dfrac{
            v(x)-v(\bar{x})
        }{
            \rho^{n+2s}
        }
        \left(
            1+O(|x|)+O(\rho)
        \right)
    \,d\bar{x}\,d\bar{y}\\
&\quad\;
    +C_{n,s}\iint_{\substack{
        |\bar{x}|\leq 4\tau\\
        \tau<|\bar{y}|\leq \diam \Sigma
    }}
        \dfrac{
            v(x)-v(\bar{x})
        }{
            \rho^{n+2s}
        }
        \left(
            1+O(|x|)+O(\rho)
        \right)
    \,d\bar{x}\,d\bar{y}
    +C_{n,s}\int_{\bar{z}\notin \cT_{4\tau}}
        \dfrac{
            v(z)
        }{
            |z-\bar{z}|^{n+2s}
        }
    \,d\bar{z}\\
&=
    (1+O(|x|))
    C_{N,s}\int_{|\bar{x}|\leq 4\tau}
        \dfrac{
            v(x)-v(\bar{x})
        }{
            |x-\bar{x}|^{N+2s}
        }
    \,d\bar{x}
    +O(1)\int_{|\bar{x}|\leq 4\tau}
        \dfrac{
            |v(x)-v(\bar{x})|
        }{
            |x-\bar{x}|^{N-1+2s}
        }
    \,d\bar{x}
    \\
&\quad\;
    +O(\tau^{-(n+2s)})\int_{|\bar{x}|\leq 4\tau}
        \left(
            |v(x)|+|v(\bar{x})|
        \right)
    \,d\bar{x}
    +O(\tau^{-(n+2s)})|v(x)|\\
&=
    (1+O(|x|))
    C_{N,s}\int_{\R^N}
        \dfrac{
            v(x)-v(\bar{x})
        }{
            |x-\bar{x}|^{N+2s}
        }
    \,d\bar{x}
    +O(1)I
    +O(\tau^{-(n+2s)})
    \left(
    	|v(x)|+\norm[L^1(B_{4\tau})]{v}
    \right).
\end{split}\]
We bound $I$ as follows,
\begin{align*}
I&=\int_{B^N_{|x|/2}(x)}
        \dfrac{
            |v(x)-v(\bar{x})|
        }{
            |x-\bar{x}|^{N-1+2s}
        }
    \,d\bar{x}
   	+\int_{B^N_{2|x|}\setminus B^N_{|x|/2}(x)}
        \dfrac{
            |v(x)-v(\bar{x})|
        }{
            |x-\bar{x}|^{N-1+2s}
        }
    \,d\bar{x}
    +\int_{B^N_{4\tau} \setminus B^N_{2|x|}}
        \dfrac{
            |v(x)-v(\bar{x})|
        }{
            |x-\bar{x}|^{N-1+2s}
        }
    \,d\bar{x}\\
&\lesssim
	|x|^{2-2s}\norm[L^\infty(B^N_{|x|/2}(x))]{Dv}
	+|v(x)|
	+|x|^{-N+1-2s}\norm[L^1(B^N_{2|x|})]{v}
	+\norm[L^1(B^N_{4\tau}\setminus B^N_{2|x|})]{|\cdot|^{-(N-1+2s)}v}.
\end{align*}
Finally, we conclude by showing that if $z\in \R^n\setminus \cT_{3\tau}$, then

 \[  | \Dsn \bar{v}(z)|
\leq \frac{C}{({\rm dist}_{\Sigma}(z))^{n+2s}}\int_{\cT_{2\tau}}\left|
           -v(\bar{z})
   \right| \,d\bar{z}
\leq
    \frac{
        C\norm[L^1(\cT_{2\tau}\setminus\Sigma)]{v}
    }{
        ({\rm dist}_{\Sigma}(z))^{n+2s}
    }.
\qedhere
\]
\end{proof}

\begin{prop}\label{prop:Ints_nN}
Suppose $v\in C^{1,1}_{\loc}(B^N_{3\tau}\setminus \set{0})$ is non-negative and supported on $B^N_{3\tau}\setminus \set{0}$. In the Fermi coordinates \eqref{eq:Fermi-1}, 
consider {the $L^\infty_{\loc}$ function $\bar{v}:\R^n\setminus\Sigma\to\R$ defined by}
\[
\bar{v}(z)
=\begin{cases}
v(x)
    & \textfor z \in \cT_{3\tau}\setminus\Sigma,\\
0
    & \textfor z \in \R^n\setminus \cT_{3\tau}.
\end{cases}
\]
Assume that $\tau>0$ is so small that
\begin{equation*}
C(N,k,s,\Sigma)\tau \leq \frac{\tilde{c}_1}{2}
    \quad \textand \quad
\tau\leq\frac{1}{4}\diam\Sigma.
\end{equation*}
where $C(N,k,s,\Sigma)$ is a universal constant that controls the error in \eqref{eq:jac} and $\tilde{c}_1$ is given in \eqref{eq:c-tilde-1}.
Then there exist universal constants $C>c>0$ (independent of $\tau$) such that for any $z\in\cT_{4\tau}$,
\[\begin{split}
    c\IntsN v(x)
\leq
    \Intsn \bar{v}(z)
&\leq
    C\IntsN v(x)
    +C\tau^{-(n-2s)}
        \norm[L^1(B_{3\tau}\setminus\Sigma)]{v},
\end{split}\]
and for $z\in\R^n \setminus \cT_{4\tau}$,
\[
c\frac{
    \norm[L^1(\cT_{3\tau}\setminus\Sigma)]{v}
}{
    (\mathrm{dist}_{\Sigma}(z))^{n-2s}
}
\leq
    \Intsn \bar{v}(z)
\leq
C\frac{
    \norm[L^1(\cT_{3\tau}\setminus\Sigma)]{v}
}{
    (\mathrm{dist}_{\Sigma}(z))^{n-2s}
}.
\qedhere
\]
\end{prop}

\begin{proof}
We proceed similarly as in \Cref{prop:Dh_nN}. In fact, the decay in $\R^n\setminus\cT_{4\tau}$ follows by exactly the same argument. Suppose $z\in \cT_{4\tau}$. In the same notation (with $s$ replaced by $-s$), using the fact that $n-2s=k+(N-2s)>k$ for $N\geq 2$, we have for $|\bar{x}|\leq 3\tau$,
\[
\int_{|\bar{y}|\leq \tau}
    \dfrac{
        1
    }{
        (|x-\bar{x}|^2+|\bar{y}|^2)^{\frac{n-2s}{2}}
    }
\,d\bar{y}
=
    \dfrac{1}{|x-\bar{x}|^{N-2s}}
    \int_{|\tilde{y}|\leq\frac{\tau}{|x-\bar{x}|}}
        \dfrac{1}{(1+|\tilde{y}|^2)^{\frac{n-2s}{2}}}
    \,d\tilde{y}
\geq
    \dfrac{\tilde{c}_1}{|x-\bar{x}|^{N-2s}},
\]
where the constant $\tilde{c}_1=\tilde{c}_1(N,k,s)>0$ does not depend on $\tau$ and can be taken universally as
\begin{equation}\label{eq:c-tilde-1}
\tilde{c}_1=
    \int_{B_{1/7}^k}
        \dfrac{1}{(1+|\tilde{y}|^2)^{\frac{n-2s}{2}}}
    \,d\tilde{y},
\end{equation}
since $|x-\bar{x}| \leq 7\tau$. Now we compute
\[\begin{split}
\Intsn \bar{v}(z)
&=
    C_{n,-s}\iint_{\substack{
        |\bar{x}|\leq 3\tau\\
        |\bar{y}|\leq \tau
    }}
        \dfrac{
            v(\bar{x})
        }{
            \rho^{n-2s}
        }
        \left(
            1+O(|x|)+O(\rho)
        \right)
    \,d\bar{x}\,d\bar{y}\\
&\quad\;
    +C_{n,-s}\iint_{\substack{
        |\bar{x}|\leq 3\tau\\
        \tau<|\bar{y}|\leq \diam \Sigma
    }}
        \dfrac{
            v(\bar{x})
        }{
            \rho^{n-2s}
        }
        \left(
            1+O(|x|)+O(\rho)
        \right)
    \,d\bar{x}\,d\bar{y}\\
&=
    \int_{
        |\bar{x}|\leq 3\tau
    }
        \dfrac{
            v(\bar{x})
        }{
            |x-\bar{x}|^{N-2s}
        }
        \left[
            C_{N,-s}
            \int_{|\tilde{y}|\leq \frac{\tau}{|x-\bar{x}|}}
            \dfrac{
                d\tilde{y}
            }{
                (1+|\tilde{y}|^2)^{\frac{n-2s}{2}}
            }
            +O(\tau)
        \right]
    \,d\bar{x}
  \\
&\quad\;
    +O(\tau^{-(n-2s)})
    \int_{|\bar{x}|\leq 3\tau}
        v(\bar{x})
    \,d\bar{x}.\\
\end{split}\]
Since the square bracket is positive for small $\tau$, and the second term is non-negative, the proof is complete.
\end{proof}

\subsection{Error estimates}
\begin{lem}\label{lem:Error estimates}
The error in \eqref{eq:E2N2} made by approximating the solution to \eqref{eq:th2} with \eqref{eq:v_eps} satisfies
\[
|\mathscr E_{\eps}(z)|
\lesssim_\tau
    |\log\eps|^{-(N-1)}
    \left(
        \frac{1}{(\mathrm{dist}_{\Sigma}(z))^{N-1}}
            \oneset{ z\in\cT_{3\tau}\setminus \Sigma}
        +\frac{1}{(\mathrm{dist}_{\Sigma}(z))^{n+1}}
            \oneset{ z\in \R^n\setminus\cT_{3\tau}}
    \right).
\]
Equivalently, there exists a constant $\tilde{C}_1=\tilde{C}_1(N,s,k,\Sigma,\tau)>0$ %
such that
\[
|\mathscr E_{\eps}(z)|
\leq
    \tilde{C}_1
    |\log\eps|^{-(N-1)}
    \omega_{N-1;n+1}(z),
\]
where $\omega_{N-1;n+1}$ is defined in \eqref{eq:weight-Yamabe}.
\end{lem}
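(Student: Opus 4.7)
The plan is to split the analysis at the tubular boundary $\partial\cT_{3\tau}$. For $z\in\R^n\setminus\cT_{3\tau}$, the cutoff gives $\bar v_\eps(z)=0$, hence $\bar v_\eps^{N/(N-1)}(z)=0$, and the second assertion of Proposition~\ref{prop:Dh_nN} (with $s=1/2$) reduces $\mathscr E_\eps(z)$ to the nonlocal tail
\[
|\mathscr E_\eps(z)|
\lesssim
\frac{\|\bar v_\eps\|_{L^1(\R^n)}}{d_\Sigma(z)^{n+1}}.
\]
The radial asymptotic from Corollary~\ref{cor:rad-half} yields $u_\eps(r)r^{N-1}\asymp(\log\frac{1}{\eps r})^{-(N-1)}\leq|\log\eps|^{-(N-1)}$ for $r\in(0,1)$, whence $\|\bar v_\eps\|_{L^1(\R^n)}\lesssim|\log\eps|^{-(N-1)}$ (the $k$-dimensional volume of $\Sigma$ is absorbed into the constant), matching the exterior part of the claimed bound.

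For $z\in\cT_{3\tau}$, I would write $z$ in Fermi coordinates $(x,y)$ with $r=|x|=d_\Sigma(z)$ and set $v(x):=(u_\eps\chi_\tau)(|x|)$. The first assertion of Proposition~\ref{prop:Dh_nN} gives
\[
(-\Delta_{\R^n})^{1/2}\bar v_\eps(z)
=(1+O(r))(-\Delta_{\R^N})^{1/2}v(x)+R_1(z),
\]
with an explicit Fermi remainder $|R_1(z)|\lesssim\tau^{-(n+1)}r^{-1+\frac{1}{N+1}}(|v(r)|+r\|Dv\|_{L^\infty(B_{r/2}(x))}+\|v\|_{L^1(B_{2\tau})})$. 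The bulk of the work is to control $(-\Delta_{\R^N})^{1/2}v(x)-(u_\eps\chi_\tau)^{N/(N-1)}(r)$, which I would split into the core $0<r\leq\tau/2$ and the cutoff transition $\tau/2\leq r\leq 3\tau$. In the core, decompose $v=u_\eps-u_\eps(1-\chi_\tau)$ and use the radial equation $(-\Delta_{\R^N})^{1/2}u_\eps=u_\eps^{N/(N-1)}$ from Corollary~\ref{cor:rad-half}, which cancels exactly with $\bar v_\eps^{N/(N-1)}(z)=u_\eps^{N/(N-1)}(r)$ since $\chi_\tau=1$ there; the cutoff correction $(-\Delta_{\R^N})^{1/2}(u_\eps(1-\chi_\tau))(x)$ is supported on $\{|y|\geq\tau\}$ so $|x-y|\geq\tau/2$, and a direct integration bounds it by $\tau^{-(N+1)}|\log\eps|^{-(N-1)}$. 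In the cutoff transition, $v$ enjoys uniform bounds $|D^jv|\lesssim\tau^{-(N-1+j)}|\log\eps|^{-(N-1)}$ (from the differentiable asymptotic of Corollary~\ref{cor:rad-half} and Lemma~\ref{lem:reg}), and a standard splitting of the singular integral at scale $\tau$ yields both $|(-\Delta_{\R^N})^{1/2}v(x)|\lesssim\tau^{-N}|\log\eps|^{-(N-1)}$ and $(u_\eps\chi_\tau)^{N/(N-1)}(r)\lesssim\tau^{-N}|\log\eps|^{-N}$.

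Substituting the derivative asymptotics $|D^ju_\eps(r)|\lesssim r^{-(N-1+j)}|\log\eps|^{-(N-1)}$ into $R_1$ gives the principal contribution
\[
|R_1(z)|\lesssim\tau^{-(n+1)}r^{-(N-\frac{1}{N+1})}|\log\eps|^{-(N-1)},
\]
which already has the form of the target bound, with one factor of $\tau^{-1}$ to spare. All remaining subleading terms—the $O(r)u_\eps^{N/(N-1)}$ geometric correction, the $\tau^{-(N+1)}|\log\eps|^{-(N-1)}$ cutoff correction in the core, and the $\tau^{-N}|\log\eps|^{-(N-1)}$ direct bound in the transition region—are absorbed into the weaker exponent $\tau^{-(n+2)}$ using $\tau\leq 1$, yielding the claimed estimate.

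The main technical obstacle is the uniform control across the cutoff transition $r\sim\tau$, where the radial equation no longer applies pointwise and the $\log$-polyhomogeneous structure of $u_\eps$ ceases to dominate; in that annulus only the $C^{1,1}$ bounds of $v$ and the explicit geometry of the cutoff are available, and one must verify that the crude estimates obtained there still fit under the sharp power $r^{-(N-\frac{1}{N+1})}$ uniformly in $\tau$.
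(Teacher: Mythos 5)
Your argument is correct and follows essentially the same route as the paper's proof: both reduce to \Cref{prop:Dh_nN} (interior expansion plus exterior tail), cancel the leading terms using the radial equation from \Cref{cor:rad-half}, and absorb the nonlocal cutoff error and the Fermi remainder (controlled via the derivative bounds of \Cref{lem:reg}) into the spare $\tau^{-(n+2)}$ factor. The only difference is bookkeeping: the paper treats the cutoff through the fractional product rule, estimating $u_\eps\DhN\chi_\tau$ and the bilinear term $\angles{u_\eps,\chi_\tau}$ in three regimes of $r$, whereas you split $\cT_{3\tau}$ into a core (writing $v=u_\eps-u_\eps(1-\chi_\tau)$, so the cutoff error becomes a single far-field integral) and a transition annulus handled by crude $C^{1,1}$ bounds — these are equivalent estimates.
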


\begin{proof}
For $\bar{v}_{\eps}$ defined as in \eqref{eq:v_eps}, thanks to \Cref{prop:Dh_nN} and the product rule,
we know that
for any $z=z(r,\omega,y)\in \cT_{3\tau}\setminus\Sigma$,
\[\begin{split}
\sE_{\eps}(z)
&=\Dhn \bar{v}_\eps(r,\omega,y) -\bar{v}_\eps(r,\omega,y)^{\frac{N}{N-1}}\\
&=(1+O(r))\DhN \bar{v}_\eps(r)
	-\bar{v}_\eps(r)^{\frac{N}{N-1}}
	+O(|\bar{v}_\eps(r)|)
	+O\left(
		r		\norm[L^\infty(B^N_{r/2}(re_1))]{D\bar{v}_\eps}
	\right)
\\
&\quad\;
	+O\left(
		r^{-N}
		\norm[L^1(B^N_{2r})]{\bar{v}_\eps}
	\right)
	+O\left(
		\norm[L^1(B^N_{2\tau}\setminus B^N_{2r})]{|\cdot|^{-N}\bar{v}_\eps}
	\right)
\\
&=(1+O(r))
    \left(
        \chi_{\tau}(r) \DhN u_{\eps}(r)
        +u_{\eps}(r) \DhN \chi_{\tau}(r)
        -2\angles{u_\eps,\chi_\tau}(r)
    \right)
    -\left(u_{\eps}(r)\chi_{\tau}(r)\right)^{\frac{N}{N-1}}\\
&\quad\;
	+O(u_\eps(r)\chi_\tau(r))
	+O\left(
		r
		\norm[L^\infty(B^N_{r/2}(re_1))]{\chi_\tau Du_\eps}
	\right)
	+O\left(
		r
		\norm[L^\infty(B^N_{r/2}(re_1))]{u_\eps D\chi_\tau}
	\right)
\\
&\quad\;
	+O\left(
		r^{-N}
		\norm[L^1(B^N_{2r}\cap B^N_{2\tau})]{u_\eps}
	\right)
	+O\left(
		\norm[L^1(B^N_{2\tau}\setminus B^N_{2r})]{|\cdot|^{-N}u_\eps}
	\right),
\end{split}\]
where
\[
\angles{u_\eps,\chi_\tau}(r)
=\frac{C_{N,s}}{2}
\int_{\R^N}
\frac{
    (u_\eps(x)-u_\eps(\bar{x}))
    (\chi_\tau(x)-\chi_\tau(\bar x))
}{
    |x-\bar{x}|^{N+2s}
}
\,d\bar{x}.
\]
For $r\in(0,3\tau)$, we have that $|\DhN \chi_\tau(r)|\lesssim \tau^{-1}$ and, using \Cref{lem:reg},
\[\begin{split}
\abs{\angles{u_\eps,\chi_\tau}(r)}
&\lesssim
\begin{dcases}
    \int_{\frac{\tau}{2}}^{\infty}
        \dfrac{
            (u_\eps(r)+u_\eps(\bar{r}))
            (1-\chi_\tau(\bar{r}))
        }{
            \bar{r}^{N+1}
        }
    \bar{r}^{N-1}
    \,d\bar{r},
            & \textfor r\in(0,\tfrac{\tau}{4}],\\
    \int_{0}^{\tau}
        \dfrac{
            (u_\eps(r)+u_\eps(\bar{r}))
            |0-\chi_\tau(\bar{r})|
        }{
            r^{N+1}
        }
    \bar{r}^{N-1}
    \,d\bar{r},
            & \textfor r\in[2\tau,3\tau),\\
    \int_{\frac{\tau}{8}<|\bar{x}|<4\tau}
        \dfrac{
            \norm[L^\infty(B_{|x-\bar{x}|}(x))]{
                Du_\eps
            }
            \norm[L^\infty(B_{|x-\bar{x}|}(x))]{
                D\chi_\tau
            }
        }{
            |x-\bar{x}|^{N-1}
        }
    \,d\bar{x}\\
    \qquad
    +\tau^{-2}
        |\log\eps|^{-(N-1)}
        r^{-(N-1)}
    +\tau^{-N}
        |\log\eps|^{-(N-1)},
            & \textfor r\in(\tfrac{\tau}{4},2\tau).
\end{dcases}
\\
&\lesssim_\tau
|\log\eps|^{-(N-1)}r^{-(N-1)}.
\end{split}\]
Thus the nonlocal cut-off errors are controlled by
\[
|u_\eps(r)\DhN \chi_\tau(r)|
+2\abs{\angles{u_\eps,\chi_\tau}(r)}
\lesssim_\tau
    |\log\eps|^{-(N-1)}
    r^{-(N-1)},
        \qquad \textfor r\in(0,3\tau).
\]
The local errors are controlled (using \Cref{lem:reg} again) using
\[
u_\eps(r)+r|Du_\eps(r)|
\lesssim |\log\eps|^{-(N-1)} r^{-(N-1)},
	\qquad 
\text{ for } r\in(0,3\tau).
\]
The integral errors are estimated by
\begin{align*}
	r^{-N}
		\norm[L^1(B^N_{2r}\cap B^N_{2\tau})]{u_\eps}
		+\norm[L^1(B^N_{2\tau}\setminus B^N_{2r})]{|\cdot|^{-N}u_\eps}
\lesssim |\log\eps|^{-(N-1)}r^{-(N-1)},
	\qquad \text{ for } r\in(0,3\tau).
\end{align*}
We conclude that
\[\begin{split}
|\sE_\eps(z)|
&\lesssim_\tau
    (1+O(r))
    \left(
        \chi_\tau(r)
        -\chi_\tau(r)^{\frac{N}{N-1}}
    \right)
    |\log\eps|^{-N}r^{-N}
    +|\log\eps|^{-(N-1)}r^{-(N-1)}
    \\
&\lesssim_\tau
    |\log\eps|^{-(N-1)}
    r^{-(N-1)},
\end{split}\]
for $z\in\cT_{3\tau}\setminus\Sigma$. On the other hand, for $z\in\R^n\setminus\cT_{3\tau}$, it is immediate from \Cref{prop:Dh_nN} that
\[\begin{split}
|\sE_\eps(z)|
&\lesssim_\tau
    |\log\eps|^{-(N-1)}
    {\rm dist}_\Sigma(z)^{-(n+1)}.
\qedhere
\end{split}\]
\end{proof}

\subsection{Linear theory}

The error given by \Cref{lem:Error estimates} suggests spaces weighted by subcritical powers (involving no logarithmic corrections), so that the linearized operator \eqref{eq:L2} is globally well approximated by the fractional Laplacian. Indeed, by \Cref{prop:loghomo},
\begin{equation}\label{eq:L2-2}
\sL_\eps
=\Dhn
    -\frac{N}{N-1}
        (\bar{v}_{\eps})^{\frac{1}{N-1}}
=\Dhn
    -(N\kappa_1+o(1))
    \dfrac{
        \chi_\tau(r)^{\frac{1}{N-1}}
    }{
        r
        \log\frac{1}{\eps r}
    }.
\end{equation}
In fact, the spatial cut-off entails that $\sL_\eps=\Dhn$ away from $\Sigma$. %
We expect that its inverse is almost the Riesz potential in $\R^n$, which is indeed the case. 

Since the linearized operator satisfies maximum principle, it suffices to construct suitable barriers. We are interested in functions of the form $\Inthn \omega_{\mu;\nu}$ where $\omega_{\mu;\nu} \in L^p(\R^n)$ for some $p\in(1,n)$. By \Cref{prop:K} (with $N$ replaced by $n$), this is equivalent to $\frac{n}{\nu}<p<\frac{N}{\mu}$. In particular, this is true for the two pairs $(\mu,\nu)=(N-1,n+1),(N-\frac{1}{2};n-\frac{1}{2})$.

\begin{lem}\label{lem:weights-Yamabe}
We have
\[
\omega^{(1)}(z)
:=
    \Inthn \omega_{N-1;n+1}(z)
\asymp_{\tau}
    \omega_{N-2;n-1}(z),
\]
\[
\omega^{(2)}(z)
:=
    \Inthn \omega_{N-\frac{1}{2};n-\frac{1}{2}}(z)
\asymp_\tau
    \omega_{N-\frac32;n-\frac32}(z),
\]
Here the weights $\omega_{\mu;\nu}$ are defined in \eqref{eq:weight-Yamabe}.
\end{lem}

The shift of the exponents $\mu$ and $\nu$ by $1$ is due to the fractional integration of order $1$, %
and the resulting parameter in the $\nu$-slot is at most $n-1$ because of the convolution with the fundamental solution.

\begin{proof}
Let $\mu\in[N-1,N-\frac12]$, $\nu\in[n-\frac12,n+1]$. 
We compute the two terms separately in
\[
\Inthn \omega_{\mu;\nu}(z)
=\Inthn\left(
    d_\Sigma^{-\mu}
    {\bf 1}_{\cT_{3\tau}\setminus\Sigma}
\right)(z)
+\Inthn\left(
    |\cdot|^{-\nu}
    {\bf 1}_{\R^n\setminus\cT_{3\tau}}
\right)(z).
\]
By \Cref{prop:Ints_nN}, \Cref{prop:K} and \Cref{lem:distances}, since $\big\||x|^{-\mu}\big\|_{L^1(B^N_{3\tau})}\leq C(N,\tau)$,
\[\begin{split}
\Inthn\left(
    d_\Sigma^{-\mu}
    {\bf 1}_{\cT_{3\tau}\setminus\Sigma}
\right)(z)
&\asymp_\tau
\begin{cases}
    \InthN\left(
        |x|^{-\mu}
        {\bf 1}_{B^N_{3\tau}}
    \right)
    +O\left(\norm[L^1(B^N_{3\tau})]{|x|^{-\mu}}\right),
         & \textfor
          z\in \cT_{4\tau\setminus\Sigma},\\
    \norm[L^1(B^N_{3\tau})]{|x|^{-\mu}}d_\Sigma(z)^{-(n-1)},
        & \textfor
         z\in \R^n\setminus\cT_{4\tau}
\end{cases}\\
&\asymp_{\tau}
\begin{cases}
    r^{-(\mu-1)}
    +O(1),
        & \textfor z\in \cT_{4\tau\setminus\Sigma},\\
    |z|^{-(n-1)},
        & \textfor z\in \R^n\setminus\cT_{4\tau}.
\end{cases}\\
&\asymp_{\tau}
    \omega_{\mu-1;n-1}(z).
\end{split}\]
Next $
    |z|^{-\nu}
    \oneset{z\in\R^n\setminus\cT_{3\tau}}
\in L^1(\R^n)$ so it has a bounded Riesz potential in $\R^n$, and using \Cref{prop:K} again, we have for  $\nu\in[n-\frac12,n+1]\setminus\set{n}$ and $|z|\gg_{\Sigma,\tau} 1$,
\begin{align*}
\Inthn\left(
    |\cdot|^{-\nu}
    {\bf 1}_{\R^n\setminus\cT_{3\tau}}
\right)(z)
&\asymp_{\tau}
	|z|^{-(n-1)}
    \int_{0}^{\infty}
        K_{n,-\frac12}(\rho)
        (|z|\rho)^{n-\nu}
        \oneset{|z|\rho\geq c(\Sigma,\tau)}
    \,d\rho\\
&\asymp_{\tau}
	|z|^{-(\nu-1)}
\left(
    \int_{\frac{c(\Sigma,\tau)}{|z|}}^{\frac12}
    +\int_{\frac12}^{\infty}
\right)
        K_{n,-\frac12}(\rho)
        \rho^{n-\nu}
    \,d\rho
\\
&\asymp_{\tau,\Sigma,\nu}
	|z|^{-\min(\nu-1,n-1)}
\asymp_{\tau,\Sigma,\nu} \omega_{\mu-1; \min(\nu-1,n-1)}(z).
\qedhere
\end{align*}
\end{proof}

Next we show the $\sL_\eps$-superharmonicity of the above two weights. 

\begin{lem}[Barriers]
\label{lem:barriers-Yamabe}
Let $\sL_\eps$ be as in \eqref{eq:L2-2}, $\omega^{(i)}$ ($i=1,2$) be as in  \Cref{lem:weights-Yamabe}. Then %
\[
\sL_\eps \omega^{(1)}
\geq
    \frac12
    \omega_{N-1;n+1},
	\qquad
\sL_\eps \omega^{(2)}
\geq \frac12 \omega_{N-\frac{1}{2};n-\frac{1}{2}}.
\]
\end{lem}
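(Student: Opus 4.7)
The strategy is to exploit the defining property of the barriers: by construction, $\omega^{(i)} = \Inthn \omega_{\mu_i;\nu_i}$ where $(\mu_1,\nu_1) = (N-\tfrac{1}{N+1}, n+1)$ and $(\mu_2,\nu_2) = (N-\tfrac{1}{2N}, n-\tfrac{1}{2N})$. The weights $\omega_{\mu_i;\nu_i}$ lie in some $L^p(\R^n)$ with $\tfrac{n}{\nu_i} < p < \tfrac{N}{\mu_i}$, so by \Cref{prop:K} the Riesz potential $\Inthn$ inverts $\Dhn$ pointwise (away from $\Sigma$), giving the identity $\Dhn \omega^{(i)} = \omega_{\mu_i;\nu_i}$ in $\R^n\setminus\Sigma$. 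Setting $V(z) := (N\kappa_1 + o(1))\tfrac{\chi_\tau(r)^{1/(N-1)}}{r\log(1/\eps r)}$ for the potential in \eqref{eq:L2-2}, we therefore have
\[
\sL\omega^{(i)}
= \omega_{\mu_i;\nu_i} - V\omega^{(i)}
\qquad \textin \R^n\setminus\Sigma.
\]
The task reduces to showing that the zeroth-order term $V\omega^{(i)}$ is absorbed by the main term $\omega_{\mu_i;\nu_i}$ for $\eps$ sufficiently small (depending on $\tau$).

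First, outside the tube $\cT_{2\tau}$ the cut-off $\chi_\tau$ vanishes, so $V\equiv 0$ there and the identity $\sL\omega^{(i)} = \omega_{\mu_i;\nu_i}$ is exact. In particular, the conclusion holds automatically in $\R^n \setminus \cT_{2\tau}$, and it only remains to handle the tubular neighborhood where $d_\Sigma(z) = r < 2\tau$.

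Inside $\cT_{2\tau}$, using the Fermi-radial control $V(z) \lesssim \frac{1}{r\log(1/\eps r)}$ together with the upper bound from \Cref{lem:weights-Yamabe} (namely $\omega^{(1)}\lesssim \tau^{-(2n-\frac{1}{N+1})}\,r^{-(N-1-\frac{1}{N+1})}$), we estimate
\[
V\omega^{(1)}(z)
\;\lesssim\; \frac{\tau^{-(2n-\frac{1}{N+1})}}{\log\frac{1}{\eps r}}\, r^{-(N-\frac{1}{N+1})}
= \frac{\tau^{-(2n-\frac{1}{N+1})}}{\log\frac{1}{\eps r}}\, \omega_{N-\frac{1}{N+1};n+1}(z).
\]
Since $r < 2\tau$ forces $\log\frac{1}{\eps r} \geq \log\frac{1}{2\eps\tau}$, the prefactor tends to $0$ as $\eps\to 0^+$ with $\tau$ fixed. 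Choosing $\eps_1>0$ (depending on $N,s,k,\Sigma,\tau$ and the implicit constant above) small enough makes this prefactor at most $\tfrac{1}{2}$, yielding the first bound. An identical argument using $\omega^{(2)}\lesssim \tau^{-(2n-\frac{1}{2N})}\,\omega_{N-1-\frac{1}{2N};n-1-\frac{1}{2N}}$ from \Cref{lem:weights-Yamabe} gives the second bound.

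The main (and essentially only) obstacle is bookkeeping: one must check that the weighted pointwise identity $\Dhn \omega^{(i)} = \omega_{\mu_i;\nu_i}$ really holds in $\R^n\setminus\Sigma$ (not merely in some distributional sense), which amounts to verifying the integrability prerequisites of \Cref{prop:K}(3) for our specific choices of $(\mu_i,\nu_i)$; once this is secured, both bounds are a one-line absorption argument driven by the logarithmic smallness $(\log\tfrac{1}{\eps\tau})^{-1}\to 0$.
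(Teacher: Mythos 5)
Your proposal is correct and follows essentially the same route as the paper: invoke the $L^p$ membership of $\omega_{\mu_i;\nu_i}$ so that $\Inthn$ genuinely inverts $\Dhn$, then absorb the potential term, which is supported in $\cT_{2\tau}$ and carries the uniform smallness factor $(\log\frac{1}{\eps r})^{-1}\leq(\log\frac{1}{2\eps\tau})^{-1}\to 0$, using the two-sided comparisons of \Cref{lem:weights-Yamabe}. The only difference is that you spell out the inside/outside-tube case distinction and the $\tau$-dependent constants explicitly, which the paper compresses into one line.
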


\begin{proof}
Since $\omega^{(i)}\in L^p(\R^n)$ for some $p\sim 1^+$ ($i=1,2$), $\Inthn$ uniquely inverts $\Dhn$. Thus
\[
\sL_\eps \omega^{(1)}
\geq
    \omega_{N-1;n+1}
    -\dfrac{
        C\chi_\tau(r)^{\frac{1}{N-1}}
    }{
        r\log\frac{1}{\eps r}
    }
    \omega_{N-2;n-1}
\geq
    \frac12
    \omega_{N-1;n+1},
\]
as the coefficient of $\omega_{N-2;n-1}$ is supported in $\cT_{2\tau}$ and is $o(r^{-1})$ as $\eps r\to 0^+$. For $\omega^{(2)}$ it is analogous.
\end{proof}

\begin{lem}[\emph{A priori} estimates]
\label{lem:apriori-yamabe}
Let $g\in \widetilde{\sY}$. If $\psi\in \widetilde{\sX}$ solves
\[
\sL_\eps \psi = g
    \qquad \text{ in } \R^n\setminus\Sigma,
\]
then there exists $\tilde{C}_2=\tilde{C}_2(N,s,k,\Sigma,\tau)>0$ such that
\begin{equation}\label{eq:apriori-Yamabe}
\norm[N-2;n-1]{\psi}
\leq
    \tilde{C}_2
    \norm[N-1;n+1]{g}.
\end{equation}
\end{lem}

\begin{proof}
There exist $\delta_j\to 0^+$ and exhaustive compact subsets $\Omega_j \subset \R^n\setminus\Sigma$ such that by \Cref{lem:barriers-Yamabe},
\[
\sL_\eps\left(
    \tilde{C}_2
    \norm[N-1;n+1]{g}
        \omega^{(1)}
    +\delta_j\omega^{(2)}
    \pm\psi
\right)
\geq 0
	\qquad \text{ in } \Omega_j,
\]
and the function in the bracket is positive (due to $\omega^{(2)}$) on $\partial \Omega_j$.
Thus \Cref{HsMP} applies for each $j$.
\end{proof}

\begin{prop}[Linear theory]
\label{prop:lin-Yamabe}
If $g\in\widetilde{\sY}$, then there exists a unique $\psi\in\widetilde{\sX}$ such that
\[
\sL_\eps \psi=g
    \qquad \text{ in } \R^n\setminus\Sigma.
\]
Moreover, the estimate \eqref{eq:apriori-Yamabe} holds. In particular, $\sL_\eps^{-1}$ maps $\sY$ to $\sX$.
\end{prop}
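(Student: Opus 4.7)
The plan is to mimic the proof of \Cref{prop:exis_uniq} via the method of continuity, now using \Cref{lem:weights-Yamabe} and \Cref{lem:barriers-Yamabe} in place of the explicit radial Green estimate and supersolution construction. Uniqueness is immediate from the a priori bound \eqref{eq:apriori-Yamabe}: if $\sL\psi = 0$ with $\psi\in\widetilde{\sX}$, then $\norm[N-1-\frac{1}{N+1};n-1]{\psi}=0$ so $\psi\equiv 0$.

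For existence, I would interpolate linearly between $\Dhn$ and $\sL_{\eps}$ by setting
\[
\sL^{\lambda}:=\Dhn-\lambda\frac{N}{N-1}(\bar{v}_{\eps})^{\frac{1}{N-1}},\qquad \lambda\in[0,1].
\]
Since the potential is non-negative, the barrier $\omega^{(1)}+\delta\omega^{(2)}$ of \Cref{lem:barriers-Yamabe} remains $\sL^{\lambda}$-superharmonic for every $\lambda\in[0,1]$. The maximum-principle argument producing \eqref{eq:apriori-Yamabe} therefore applies verbatim, yielding an a priori estimate with a constant $\tilde{C}_{2}$ that is \emph{independent} of $\lambda$.

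For the base case $\lambda=0$, the operator $(\sL^{0})^{-1}=\Inthn$ is the Riesz potential (uniquely inverting $\Dhn$ on $L^{p}(\R^{n})$ for $p$ close to $1$, cf.\ \cite{Fall}), and by \Cref{lem:weights-Yamabe} it maps $\widetilde{\sY}$ into $\widetilde{\sX}$ boundedly. For the inductive step, assume $(\sL^{\lambda})^{-1}:\widetilde{\sY}\to\widetilde{\sX}$ exists with operator norm $\le \tilde{C}_{2}$. Rewrite $\sL^{\lambda+\delta}\psi=g$ as
\[
\psi=(\sL^{\lambda})^{-1}\!\left[g+\delta\frac{N}{N-1}(\bar{v}_{\eps})^{\frac{1}{N-1}}\psi\right].
\]
The multiplication operator $M:\psi\mapsto(\bar{v}_{\eps})^{\frac{1}{N-1}}\psi$ is bounded from $\widetilde{\sX}$ to $\widetilde{\sY}$: $\bar{v}_{\eps}$ is supported in $\cT_{2\tau}$ and by \Cref{cor:rad-half} satisfies $(\bar{v}_{\eps})^{\frac{1}{N-1}}\lesssim r^{-1}(\log\tfrac{1}{\eps r})^{-1}\lesssim r^{-1}$ there, so
\[
(\bar{v}_{\eps})^{\frac{1}{N-1}}\,\omega_{N-1-\frac{1}{N+1};n-1}\lesssim \omega_{N-\frac{1}{N+1};n+1}
\]
(the inequality is automatic outside $\cT_{2\tau}$ since $M$ vanishes there). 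Thus for $\delta$ small, universal, the right-hand side is a contraction on $\widetilde{\sX}$, furnishing $(\sL^{\lambda+\delta})^{-1}$. Iterating finitely many times reaches $\lambda=1$ and gives the desired invertibility; the claim that $\sL_{\eps}^{-1}$ maps $\sY$ into $\sX$ is then a direct consequence of \eqref{eq:apriori-Yamabe} and the numerics $\norm[N-\frac{1}{N+1};n+1]{g}\leq 2\tilde{C}_{1}|\log\eps|^{-(N-1)}$.

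The only place that requires genuine care is verifying the boundedness of $M:\widetilde{\sX}\to\widetilde{\sY}$ as above, which hinges on the pointwise bound for $\bar{v}_{\eps}$ from \Cref{cor:rad-half} together with the cut-off structure $\chi_{\tau}$ that kills $M$ outside $\cT_{2\tau}$ and thereby trivializes the $|z|^{-(n+1)}$ tail in the target weight. Every other ingredient—the uniform a priori estimate, the mapping property of $\Inthn$, and the contraction step—has already been isolated in \Cref{lem:weights-Yamabe}, \Cref{lem:barriers-Yamabe} and the maximum-principle framework of the preceding subsection.
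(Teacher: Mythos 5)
Your proof is correct and follows essentially the same route as the paper: the method of continuity interpolating between $\Dhn$ and $\sL_\eps$, with the base case handled by the Riesz potential via \Cref{lem:weights-Yamabe}, the uniform (in $\lambda$) a priori estimate coming from the barriers of \Cref{lem:barriers-Yamabe}, and the inductive step via the boundedness of the multiplication by $(\bar{v}_\eps)^{\frac{1}{N-1}}$ from $\widetilde{\sX}$ to $\widetilde{\sY}$. Your write-up is in fact more detailed than the paper's one-line argument, spelling out exactly the points (uniformity of $\tilde{C}_2$ in $\lambda$ and the potential's mapping property) that the paper leaves implicit.
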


\begin{proof}
Note that $\Inthn:\widetilde{\sY}\to \widetilde{\sX}$ by \Cref{lem:weights-Yamabe}. By the standard method of continuity using \Cref{lem:apriori-yamabe}, for $\lambda\in[0,1]$ the inverse operator of
\[
\sL_{\eps,\lambda}
:=
    \Dhn
    -\lambda\frac{N}{N-1}
        (\bar{v}_\eps)^{\frac{1}{N-1}}
\]
is well defined from $\widetilde{\sY}$ to $\widetilde{\sX}$,
since the potential is also a continuous linear operator from $\widetilde{\sY}$ to $\widetilde{\sX}$.
\end{proof}

\subsection{The nonlinear fixed point argument}

\begin{prop}[Contraction]
\label{prop:G-Yamabe}
For $\eps\ll 1$, $\sG_\eps$ as given in \eqref{eq:NL2} is a contraction on $\sX$.
\end{prop}

\begin{proof}
By \Cref{lem:Error estimates} and \Cref{prop:lin-Yamabe},
\[
\norm[N-2;n-1]{
    \sL_\eps^{-1}(\sE_\eps)
}
\leq
    \tilde{C}_1
    \tilde{C}_2
    |\log\eps|^{-(N-1)}.
\]
Given $\psi,\tilde{\psi}\in\sX$, we compute
\[\begin{split}
\abs{
    \sN[\psi]-\sN[\tilde{\psi}]
}
&=\abs{
    |\bar{v}_\eps+\psi|^{\frac{n+1}{n-1}}
    -|\bar{v}_\eps+\tilde\psi|^{\frac{n+1}{n-1}}
    -\frac{n+1}{n-1}(\bar{v}_{\eps})^{\frac{2}{n-1}}(\psi-\tilde{\psi})
}\\
&\leq
    \dfrac{2(n+1)}{(n-1)^2}
    \int_{0}^{1}\int_{0}^{1}
        \abs{
            \bar{v}_\eps
            +\tau(1-t)\psi
            -\tau t\tilde{\psi}
        }^{\frac{n+1}{n-1}-2}
    \,dt\,d\tau
    \cdot
    \left(
        |\psi|
        +|\tilde{\psi}|
    \right)
    \abs{
        \psi-\tilde{\psi}
    }\\
&\leq
    C(N,s,k,\Sigma,\tau,\tilde{C}_1,\tilde{C}_2)
    \left(
        |\log\eps|^{-(N-1)}
        \omega_{N-1;n-1}
    \right)^{\frac{n+1}{n-1}-2}\\&
    \hspace{4cm}
    \cdot
    \left(
        |\log\eps|^{-(N-1)}
        \omega_{N-2;n-1}
    \right)^2
    \norm[N-2;n-1]{
        \psi-\tilde{\psi}
    }\\
&\leq
    C(N,s,k,\Sigma,\tau,\tilde{C}_1,\tilde{C}_2)    |\log\eps|^{-N}
    \omega_{N-2;n+1}
    \norm[N-2;n-1]{
        \psi-\tilde{\psi}
    }.
\end{split}\]
Since $\omega_{N-2;n+1} \leq \omega_{N-1;n+1}$, \Cref{prop:lin-Yamabe} yields
\[
\abs{
    \sG_\eps[\psi]
    -\sG_\eps[\tilde{\psi}]
}
\leq
    C(N,s,k,\Sigma,\tau,\tilde{C}_1,\tilde{C}_2)    |\log\eps|^{-N}
    \omega_{N-2;n-1}
    \norm[N-2;n-1]{
        \psi-\tilde{\psi}
    }.
\]
Therefore, %
$\cG_\eps$ is a contraction on $\sX$.
\end{proof}

\subsection{Proof of \Cref{th:Yamabe}}

\begin{proof}
Using \Cref{prop:G-Yamabe}, there exists a unique solution $\psi\in\sX$ of \eqref{eq:NL2}, i.e. a solution $v=\bar{v}_\eps+\psi$ of \eqref{eq:th2} which is positive near $\Sigma$, $\frac12$-superharmonic on $\R^n\setminus\Sigma$ and vanishes at infinity. %
 {Thus,  global non-positive minima do not exist, and so $v>0$.} This concludes the proof.
\end{proof}

\appendix

\section{Functional framework}
\label{app:functional}

\subsection{Standard function spaces}

Let $\Omega\subseteq\R^N$ be a smooth domain. We denote the standard Lebesgue spaces by $L^p(\Omega)$ for $1\leq p\leq \infty$, and those weighted algebraically at infinity by
\[
L^p_{\alpha}(\R^N)
=\set{
    v\in L^p_{\loc}(\R^N):
    \int_{\R^N}
        \dfrac{
            |v(x)|^p
        }{
            \angles{x}^{N+\alpha}
        }
    \,dx
    <+\infty
},
    \qquad \alpha\in \R.
\]
We mainly take $(p,\alpha)=(\frac{N}{N-2s},-2s)$  or $(1,2s)$. To accommodate the Poisson formula in $B_1$, we set %
\[
\tilde{L}^1_{2s}(\R^N\setminus B_1)
=\set{
    v\in L^1_{\loc}(\R^N):
    \int_{\R^N\setminus B_1}
        \dfrac{
            |v(x)|
        }{
            |x|^{N}
            (|x|^2-1)^{s}
        }
    \,dx
    <+\infty
}.
\]
The space $C^\alpha(\Omega)$ contains standard H\"{o}lder continuous functions when $\alpha>0$ is a non-integer and $\alpha$-fold continuously differentiable functions if $\alpha\in\mathbb{N}\cup\set{0}$. We write
\[
C^{2s+}(\Omega)=\bigcup_{\alpha>0}C^{2s+\alpha}(\Omega).
\]
For $s\in(0,1)$, the Sobolev--Slobodeckij space $H^s(\Omega)$ and the Lions--Magenes space $H^s_{00}(\Omega)$ are endowed with the norms
\[
    \norm[H^s(\Omega)]{v}^2
    :=\int_{\Omega}
        v^2
    \,dx
    +\iint_{\Omega\times \Omega}
        \dfrac{
            (v(x)-v(y))^2
        }{
            |x-y|^{N+2s}
        }
    \,dx\,dy
    <+\infty,
\]
\[
    \norm[H_{00}^s(\Omega)]{v}^2
    :=\int_{\Omega}
        v^2
    \,dx
    +\int_{\Omega}
        \dfrac{
            v^2
        }{
            \dist(x,\p\Omega)^{2s}
        }
    \,dx
    +\iint_{\Omega\times \Omega}
        \dfrac{
            (v(x)-v(y))^2
        }{
            |x-y|^{N+2s}
        }
    \,dx\,dy
    <+\infty.
\]
Write $H^s_0(\Omega)$ as the closure of $C_c^\infty(\Omega)$ with respect to the norm $\norm[H^s(\Omega)]{\cdot}$. In a bounded smooth domain we have the inclusions
(see for instance \cite[Section 8.10]{Bhattacharyya-book})
\[\begin{cases}
H^s_{00}(\Omega)=H^s_0(\Omega)=H^s(\Omega)
    & \textfor s\in(0,\frac12),\\
H^{\frac12}_{00}(\Omega)\subsetneq H_0^{\frac12}(\Omega)=H^{\frac12}(\Omega)
    & \textfor s=\frac12,\\
H^s_{00}(\Omega)=H^s_0(\Omega)\subsetneq H^s(\Omega)
    & \textfor s\in(\frac12,1).
\end{cases}\]
The two strict inclusions are exemplified by the constant function. Observe also that $H^s_{00}(\Omega)$ contain precisely those functions whose zero extension lies in $H^s(\R^N)$.

\subsection{Green and Poisson formulae}

Let $\Omega\subset\R^N$. The solution to the Dirichlet problem
\[\begin{dcases}
\Ds u=f
    & \text{ in } \Omega,\\
u=g
    & \text{ in } \R^N\setminus\Omega,
\end{dcases}\]
is given by
\[
u(x)
=
	\cG_{\Omega}[f](x)
	+\cP_{\Omega}[g](x)
:=
    \int_{\Omega}
        \bG_{\Omega}(x,y)
        f(y)
    \,dy
    +\int_{\R^N\setminus\Omega}
        \bP_{\Omega}(x,y)
        g(y)
    \,dy,
\]
where $\bG_\Omega$ and $\bP_\Omega$ are the Green and Poisson kernels in $\Omega$ and $f,g$ are suitable data such that the right hand side is well-defined. For the unit ball and its exterior, these kernels are given explicitly. When $\Omega=B_1$, the Green and Poisson kernels are given respectively in \cite{Bucur}:
\begin{equation}
\label{eq:green_B1}
\bG_{B_1}(x,y)
=\dfrac{
    \Gamma(\frac{N}{2})
}{
    2^{2s}
    \pi^{\frac{N}{2}}
    \Gamma(s)^2
}
    \dfrac{
        1
    }{
        |x-y|^{N-2s}
    }
    \int_{0}^{\frac{(1-|x|^2)(1-|y|^2)}{|x-y|^2}}
        \dfrac{
            \tau^{s-1}
        }{
            (\tau+1)^{\frac{n}{2}}
        }
    \,d\tau,
\qquad
x,y\in B_1,
\end{equation}
\begin{equation*}\label{eq:poisson_B1}
\bP_{B_1}(x,y)
=\frac{
    \Gamma(\frac{N}{2})
    \sin(\pi s)
}{
    \pi^{\frac{N}{2}+1}
}
    \left(
        \dfrac{
            1-|x|^2
        }{
            |y|^2-1
        }
    \right)^s
    \dfrac{1}{
        |x-y|^N
    },
\qquad
    x\in B_1,\,y\in \R^N\setminus B_1.
\end{equation*}
An immediately pointwise upper bound for the Poisson integral is given by
\begin{equation}
\label{eq:Poisson-bound}
|\cP_{B_1}[g](x)|
\leq C(1-|x|)^{-(N-s)}
	\norm[\tilde{L}^1_{2s}(\R^N\setminus B_1)]{g},
		\qquad x\in B_1.
\end{equation}
When $\Omega=B_1^c=\R^N\setminus B_1$, one obtains the corresponding kernels by taking the Kelvin transformation \cite{ALW}. In fact,
\[
\bG_{B_1^c}(x,y)
=\dfrac{
    \Gamma(\frac{N}{2})
}{
    2^{2s}
    \pi^{\frac{N}{2}}
    \Gamma(s)^2
}
    \dfrac{
        1
    }{
        |x-y|^{N-2s}
    }
    \int_{0}^{\frac{(|x|^2-1)(|y|^2-1)}{|x-y|^2}}
        \dfrac{
            \tau^{s-1}
        }{
            (\tau+1)^{\frac{n}{2}}
        }
    \,d\tau,
\quad
x,y\in \R^N\setminus B_1,
\]
\[
\bP_{B_1^c}(x,y)
=\frac{
    \Gamma(\frac{N}{2})
    \sin(\pi s)
}{
    \pi^{\frac{N}{2}+1}
}
\left(\frac{|x|^2-1}{1-|y|^2}\right)^s
\frac{1}{|x-y|^{N}},
    \qquad
x\in \R^N\setminus B_1,\, y\in B_1.
\]
Note that the Green kernel in a general smooth bounded domain satisfies the two-sided estimate %
\[
\bG_{\Omega}(x,y)
\asymp
    \dfrac{1}{|x-y|^{N-2s}}
    \min\left(
        1,
        \dfrac{
            {\rm dist}(x,\p\Omega)
            {\rm dist}(y,\p\Omega)
        }{
            |x-y|^2
        }
    \right)^s.
\]

\subsection{Notions of solution}

We discuss several notions of solution for the fractional Lane--Emden--Serrin equation
\begin{equation}\label{eq:main-notion}
\Ds u=u^{\frac{N}{N-2s}}
    \qquad \text{ in } B_1\setminus\set{0}.
\end{equation}

\begin{defn}\label{defn:sol}
We say that:
\begin{enumerate}
\item $u\in C^{2s+}(B_1\setminus\set{0})\cap L^1_{2s}(\R^N)$ is a \emph{classical solution} if \eqref{eq:main-notion} is satisfied everywhere in $B_1\setminus\set{0}$.
\item %
    $u\in H^s(\R^N)$ is a \emph{variational solution} (or \emph{weak solution}) of \eqref{eq:main-notion} if
    \[
    \dfrac{C_{N,s}}{2}
    \iint_{(\R^N\times \R^N)\setminus (B_1^c \times B_1^c)}
        \dfrac{
            (u(x)-u(y))
            (\zeta(x)-\zeta(y))
        }{
            |x-y|^{n+2s}
        }
    \,dx\,dy
    =\int_{B_1}
        u^{\frac{N}{N-2s}}
        \zeta
    \,dx,
    \]
    $\forall\zeta\in H^s_{00}(\R^N)$.
\item $u\in L^{\frac{N}{N-2s}}(B_1)\cap L^1_{2s}(\R^N)$ is a \emph{distributional solution} (or \emph{very weak solution}) of \eqref{eq:main-notion} if
    \[
    \int_{\R^N}
        u\Ds \zeta
    \,dx
    =\int_{B_1}
        u^{\frac{N}{N-2s}}
        \zeta
    \,dx,
        \qquad \forall\zeta\in C^{2s+}(B_1),\, \zeta|_{\R^N\setminus B_1}\equiv 0.
    \]
\item $u\in L^{\frac{N}{N-2s}}(B_1)\cap \tilde{L}^1_{2s}(\R^N \setminus B_1)$ is a \emph{weak-dual solution} if
    \[
    \int_{B_1}
        u\psi
    \,dx
    +\int_{\R^N\setminus B_1}
        u\Ds\cG[\psi]
    \,dx
    =\int_{B_1}
        u^{\frac{N}{N-2s}}
        \cG[\psi]
    \,dx,
        \qquad \forall \psi\in L^\infty(B_1).
    \]
\item $u\in L^{\frac{N}{N-2s}}(B_1) \cap \tilde{L}^1_{2s}(\R^N \setminus B_1)$ is a \emph{Green--Poisson solution} if
    \[
    u=\cG_{B_1}[u^{\frac{N}{N-2s}}]
        +\cP_{B_1}[u]
            \qquad \textae \text{ in } B_1,
    \]
    {where $\cG_{B_1}$ and $\cP_{B_1}$ are the Green and Poisson operators defined above.}
\end{enumerate}
\end{defn}

It is not hard to see that these definitions are equivalent: whenever two of them make sense for a solution simultaneously, one definition implies the other. Indeed, going down from (2) to (4) one simply enlarges the space of test functions and the reverse direction holds in view of the density of one function space in another. %

\begin{remark}
Some remarks concerning \Cref{defn:sol} are in order.
\begin{itemize}
\item In (1), the regularity and decay are the minimal requirement for the pointwise evaluation of the fractional Laplacian.
\item In (2), that the solution is satisfied only (weakly) in $B_1$ is seen from the fact that $\zeta|_{\R^N\setminus B_1}\equiv 0$. The left hand side is finite by the Cauchy--Schwarz inequality, while the right hand side is finite since the Serrin exponent is Sobolev-subcritical. Moreover, it is commonly written in the literature (such as \cite{FKV2015, SV2013}) that the test function lies in $H^s(\R^N)$ and is compactly supported in $\Omega$. But these functions are precisely those in the Lions--Magenes space \cite{Lions-Magenes}.
\item In (3), a straightforward computation reveals that $\Ds \zeta \lesssim \angles{x}^{-N-2s}$ for $\zeta\in C_c^{2s+}(\Omega)$. Hence the left hand side is finite.
\item In (4), the integrability of the second term on the left hand side near the unit sphere follows from either the explicit Poisson kernel or the estimate in the general domain \cite[Equation (36)]{Abatangelo2015}, $|\Ds \bG_{B_1}(x,y)|\lesssim \dist(y,\p\Omega)^{-s}$ for $x\in B_1$, $y\in \R^N\setminus B_1$.
\end{itemize}

\end{remark}

\section{Some explicit formulae for radial functions}\label{sec:computations}

\color{red}

\normalcolor

\begin{lem}
The formula \eqref{eq:K-2F1} holds.
\end{lem}

\begin{proof}
Under the change of variable $\sigma=\sin^2\frac{\theta}{2}$,
\begin{align*}
K_{N,s}(\rho)
&=C_{N,s}\cH^{N-2}(\bS^{N-2})\rho^{(2s)_+-1}
\int_{0}^{\pi}
	\frac{
		\sin^{N-2}\theta
	}{
		\bigl(
			(\rho-1)^2+2\rho(1-\cos\theta)
		\bigr)^{\frac{N+2s}{2}}
	}
\,d\theta\\
&=C_{N,s}\cH^{N-2}(\bS^{N-2})
\frac{
	\rho^{(2s)_+-1}
}{
	|\rho-1|^{N+2s}
}
\int_{0}^{\pi}
	\frac{
		2^{N-2}
		\sin^{N-3}\frac{\theta}{2}
		\cos^{N-3}\frac{\theta}{2}
		\cdot
		(\sin\frac{\theta}{2}\cos\frac{\theta}{2}\,d\theta)
	}{
		\bigl(
			1+\frac{4\rho}{|\rho-1|^2}
			\sin^2\frac{\theta}{2}
		\bigr)^{\frac{N+2s}{2}}
	}\\
&=2^{N-2}C_{N,s}\cH^{N-2}(\bS^{N-2})
\frac{
	\rho^{(2s)_+-1}
}{
	|\rho-1|^{N+2s}
}
\int_{0}^{1}
	\sigma^{\frac{N-3}{2}}
	(1-\sigma)^{\frac{N-3}{2}}
	\bigl(
		1+\tfrac{4\rho}{|\rho-1|^2}\sigma
	\bigr)^{-\frac{N+2s}{2}}
\,d\sigma.
\end{align*}
Identifying the last integral using \eqref{prop7} and simplifying the constants, \eqref{eq:K-2F1} follows.
\end{proof}

\begin{cor}
The asymptotic expansions \eqref{eq:K-asymp-1}--\eqref{eq:K-asymp-2} hold. 
\end{cor}

\begin{proof}
See \Cref{hypergeo}. For the case $s=-1/2$, $\rho \to 1$ with $N$ odd, see the remark in \cite[15.3.14]{Abramowitz}.
\end{proof}

\normalcolor

\begin{lem}[Vanishing of the zeroth order term]
\label{cor:K-int-0}
	Let $s\in(0,1)$. For any $f\in C_c^2([0,1))$, %
	\[
	\int_{0}^{1}
		K_{N,s}(\rho)f(\rho)
	\,d\rho
	=\int_{1}^{\infty}
		K_{N,s}(\rho)f(\rho^{-1})\rho^{N-2s}
	\,d\rho.
	\]
	In particular, by taking $f(\rho)=\rho^{N-2s}-1$, one obtains that
	\[
	\PV\int_{0}^{\infty}
		K_{N,s}(\rho)(\rho^{N-2s}-1)
	\,d\rho=0.
	\]
\end{lem}

\begin{proof}
Use the symmetry
$
K_{N,s}\left(\frac{1}{\rho}\right)=\rho^{N+2-2s}K_{N,s}(\rho).
$
\end{proof}

\begin{lem}[Conformal fractional Laplacian in polar coordinates]
\label{lem:emden_change}
If $v(r)$ is a radial function, then at each point where $v$ is $C^{2}$, there holds
\[
r^N\Ds\left(\dfrac{v(r)}{r^{N-2s}}\right)
=
	\PV\int_{0}^{\infty}
		K_{N,s}(\rho)(v(r)-v(r\rho))
	\,d\rho.
\]
The principal value is not needed when $s\in(0,\frac12)$.
\end{lem}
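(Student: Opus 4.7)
The plan is to directly compute the integro-differential action by substituting into the defining singular integral, passing to polar coordinates, performing the angular integration to recognize the kernel $K_{N,s}$, and finally using the zeroth-order cancellation (\Cref{cor:K-int-0}) to put the output in the desired ``incremental'' form.

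First I would fix $x$ with $|x|=r$ and write $u(x)=v(|x|)/|x|^{N-2s}$. Using $y=\bar{r}\omega$, $\omega\in\bS^{N-1}$, and denoting by $\theta$ the angle between $y$ and $x$, I would pass the defining principal value
\[
\Ds u(x)=C_{N,s}\,\PV\!\int_{\R^N}\dfrac{u(x)-u(y)}{|x-y|^{N+2s}}\,dy
\]
into polar form through the identity $|x-y|^2=r^2(1+\rho^2-2\rho\cos\theta)$ with $\rho=\bar{r}/r$, and use $dy=\bar{r}^{N-1}\,d\bar{r}\,d\omega$ together with the spherical identity $d\omega=\cH^{N-2}(\bS^{N-2})\sin^{N-2}\theta\,d\theta$. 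After substituting $\bar{r}=r\rho$, the Jacobian and the explicit powers of $r$ collect to give
\[
r^N\Ds u(x)
=C_{N,s}\,\PV\!\int_0^{\infty}\!\!\int_0^{\pi}
\dfrac{v(r)\rho^{N-1}-v(r\rho)\rho^{2s-1}}{(1+\rho^2-2\rho\cos\theta)^{\frac{N+2s}{2}}}
\cH^{N-2}(\bS^{N-2})\sin^{N-2}\theta\,d\theta\,d\rho.
\]
Factoring out $\rho^{2s-1}$ from both terms and recognizing the definition of $K_{N,s}(\rho)$, this reduces to
\begin{equation}\label{eq:prep-step}
r^N\Ds u(x)
=\PV\!\int_0^{\infty}K_{N,s}(\rho)\bigl(v(r)\rho^{N-2s}-v(r\rho)\bigr)\,d\rho.
\end{equation}

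Next I would apply \Cref{cor:K-int-0}, which gives $\int_0^\infty K_{N,s}(\rho)(\rho^{N-2s}-1)\,d\rho=0$ (interpreted in the appropriate PV sense near $\rho=1$). Adding $v(r)$ times this vanishing integral to \eqref{eq:prep-step} converts the integrand to $K_{N,s}(\rho)(v(r)-v(r\rho))$ as claimed. When $s\in(0,\tfrac12)$, the kernel satisfies $K_{N,s}(\rho)\asymp|\rho-1|^{-1-2s}$ near $\rho=1$ by \Cref{lem:KF}, and since $v\in C^{2s+\alpha}$ implies $|v(r)-v(r\rho)|\lesssim|\rho-1|^{2s+\alpha}$, the integrand is absolutely integrable and the PV symbol is not needed; one also uses the decay $K_{N,s}(\rho)\asymp\rho^{-N-1}$ as $\rho\to\infty$, paired with $|v(r)|+|v(r\rho)|$ bounded on compact subsets plus the polynomial growth allowed by the $L^1_{2s}$ hypothesis implicit in the regularity assumption.

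The main obstacle is reconciling the two different principal-value prescriptions: the original symmetric excision $\set{|x-y|>\eps}$ around $y=x$ used in defining $\Ds u$, versus the symmetric excision $\set{|\log\rho|>\eps'}$ around $\rho=1$ used in the stated formula and in \Cref{cor:K-int-0}. To settle this I would use the self-reciprocity $K_{N,s}(1/\rho)=\rho^{N+2-2s}K_{N,s}(\rho)$ from \Cref{lem:KF}(1): the change of variables $\rho\mapsto 1/\rho$ maps $(1,\infty)$ to $(0,1)$ and, together with the scaling $\bar{r}=r\rho$, identifies the two cutoffs up to an error $O(\eps^2)$ at the level of the annulus $\set{1-\eps'<\rho<1+\eps}$, which vanishes in the limit because the symmetrized integrand $K_{N,s}(\rho)\bigl[(v(r)-v(r\rho))+\rho^{N-2s}(v(r)-v(r/\rho))\bigr]$ is $O(|\rho-1|^{1-2s+\alpha})$ near $\rho=1$ by a second-order Taylor expansion of $v$. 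This matches the PV definition given in the statement of \Cref{prop:K}(2), and completes the identification.
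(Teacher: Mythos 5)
Your proposal is correct and follows essentially the same route as the paper: pass to polar coordinates, rescale $\bar r=r\rho$ to recognize $K_{N,s}$, obtain $\PV\int_0^\infty K_{N,s}(\rho)(\rho^{N-2s}v(r)-v(r\rho))\,d\rho$, and then invoke \Cref{cor:K-int-0} to remove the zeroth-order term. Your extra care about the two principal-value prescriptions and the absolute convergence for $s\in(0,\tfrac12)$ is a welcome elaboration of points the paper leaves to the PV definition in \Cref{rmk:PV}, but it is not a different argument.
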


\begin{remark}\label{rmk:PV}
Here $\PV$ denotes the Cauchy principal value in the sense
\[\begin{split}
&\quad\;
\PV\int_{0}^{\infty}
		K_{N,s}(\rho)(v(r)-v(r\rho))
	\,d\rho
	\\
&=\lim_{\eps\searrow0}
\int_{1+\eps}^{\infty}
	\left(
		K_{N,s}(\rho)(v(r)-v(r\rho))
		+\dfrac{1}{\rho^2}
            K_{N,s}\left(\dfrac1\rho\right)
		\left(
			v(r)-v\left(\dfrac{r}{\rho}\right)
		\right)
	\right)
\,d\rho\\
&=\lim_{\eps\searrow0}
\int_{1+\eps}^{\infty}
	K_{N,s}(\rho)
	\left(
		(v(r)-v(r\rho))
		-\rho^{N-2s}
		\left(
			v\left(\dfrac{r}{\rho}\right)-v(r)
		\right)
	\right)
\,d\rho.
\end{split}\]
In particular, when $s\in[\frac12,1)$ and $v\in C^{2}$ on some $[r_-,r_+]\ni r$,
using Taylor expansion close to $\rho=1$, we have
\[\begin{split}
(v(r)-v(r\rho))
	-\rho^{N-2s}
	\left(
		v\left(\dfrac{r}{\rho}\right)-v(r)
	\right)
&=rv'(r)(\rho-1)(\rho^{N-1-2s}-1)
+O\left(
        \seminorm[C^{0,1}({[r_-,r_+]})]{v'}
        r^{2}
        |\rho-1|^{2}
    \right),
\end{split}
\]
and hence the singular integral is finite. %
Moreover, an upper bound is given by
\begin{equation*}\begin{split}
\abs{\PV\int_{0}^{\infty}
    K_{N,s}(\rho)(v(r)-v(r\rho))
\,d\rho}
&\lesssim
    v(r)
    +rv'(r)
    +r^{2}
    \seminorm[{C^{0,1}([\frac{r}{2},2r])}]{v'}
+\int_{0}^{\frac12}
        v(r\rho)
    \,d\rho
    +\int_{2}^{\infty}
        \dfrac{
            v(r\rho)
        }{
            \rho^{N+2s}
        }
    \,d\rho.
\end{split}\end{equation*}
\end{remark}

\begin{proof}[Proof of \Cref{lem:emden_change}]
In the radial variables %
$r=|x|$ and $\bar{r}=|y|$ (where $y$ is the dummy variable in $\R^N$),
\[\begin{split}
&\quad\;
r^N\Ds\left(\dfrac{v(r)}{r^{N-2s}}\right)
\\
&=C_{N,s}
	r^N
	\PV\int_{0}^{\infty}
	\int_{0}^{\pi}
		\dfrac{
			r^{-(N-2s)}v(r)-(\bar{r})^{-(N-2s)}v(\bar{r})
		}{
			(r^2+\bar{r}^2-2r\bar{r}\cos\theta
			)^{\frac{N+2s}{2}}
		}
	\cH^{N-2}(\bS^{N-2})\sin^{N-2}\theta\,d\theta
	\,\bar{r}^{N-1}\,d\bar{r}\\
&=C_{N,s}
	\PV\int_{0}^{\infty}
	\int_{0}^{\pi}
		\dfrac{
			(\frac{\bar{r}}{r})^{N-2s}v(r)-v(\bar{r})
		}{
			\left(
				1+(\frac{\bar{r}}{r})^2
				-2\frac{\bar{r}}{r}\cos\theta
			\right)^{\frac{N+2s}{2}}
		}
	\cH^{N-2}(\bS^{N-2})\sin^{N-2}\theta\,d\theta
\left(\frac{\bar{r}}{r}\right)^{2s}
	\dfrac{d\bar{r}}{\bar{r}}\\
&=
	\PV\int_{0}^{\infty}
		K_{N,s}(\rho)\left(
			\rho^{N-2s}v(r)-v(r\rho)
		\right)
	\,d\rho,
\end{split}\]
where $\bar{r}=r\rho$. Using \Cref{cor:K-int-0}, the result follows.
\end{proof}

\begin{lem}[Riesz potential in polar coordinates]
Suppose $|\cdot|^{-N}f \in L^p(\R^N)$ for some $p\in(1,\frac{N}{2s})$, and
\[
\Ds\left(
    \dfrac{v(r)}{r^{N-2s}}
\right)
=\dfrac{f(r)}{r^N}
    \qquad \textae \text{ in } \R^N.
\]
Then $v$ is uniquely given by the Riesz potential
\[
v(r)
=r^{N-2s}
    \Ints\left(
        \dfrac{f(r)}{r^N}
    \right)
=\int_{0}^{\infty}
    K_{N,-s}(\rho)
    f(r\rho)
\,d\rho.
\]
\end{lem}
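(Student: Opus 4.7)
The lemma has two separate assertions: an explicit polar--coordinate formula for the Riesz potential applied to a radial kernel of the form $f(r)/r^N$, and a uniqueness claim for the distributional equation $\Ds u = g$. My plan is to establish the formula by a direct symmetry/change-of-variables computation, and then invoke a known inversion result for the Riesz potential to handle uniqueness.

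\textbf{Derivation of the polar formula.} Starting from
\[
\Ints g(x) = C_{N,-s}\int_{\R^N}\frac{g(y)}{|x-y|^{N-2s}}\,dy,
\]
I would apply this to $g(y) = f(|y|)/|y|^N$. Parametrizing $y = \bar{r}\bar{\omega}$ with $\bar{r}\in(0,\infty)$, $\bar{\omega}\in\bS^{N-1}$, and using the rotational symmetry of the integrand in $\bar\omega$ about the axis $x/|x|$, the $\bar\omega$ integral reduces (by the same co-latitude decomposition already used in \Cref{lem:emden_change}) to an integral in the polar angle $\theta$ weighted by $\cH^{N-2}(\bS^{N-2})\sin^{N-2}\theta$. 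Writing $r=|x|$ and substituting $\bar{r}=r\rho$ then factors $r^{N-2s}$ out and identifies precisely the kernel
\[
K_{N,-s}(\rho)=C_{N,-s}\cH^{N-2}(\bS^{N-2})\rho^{-1}\int_{0}^{\pi}\frac{\sin^{N-2}\theta}{(1+\rho^2-2\rho\cos\theta)^{\frac{N-2s}{2}}}\,d\theta,
\]
from its definition. The use of Fubini is justified throughout since $|\cdot|^{-N}f\in L^{p}(\R^N)$ for some $p\in(1,\tfrac{N}{2s})$ and the Hardy--Littlewood--Sobolev inequality guarantees that the double integral converges absolutely in the corresponding $L^{p^*}$ sense, with $\tfrac{1}{p^*}=\tfrac{1}{p}-\tfrac{2s}{N}$.

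\textbf{Uniqueness.} Once the explicit formula is in hand, it remains to see that any solution $v$ of
\[
\Ds\!\left(\frac{v(r)}{r^{N-2s}}\right)=\frac{f(r)}{r^{N}}\quad \textae \textin\R^N
\]
must agree with the Riesz potential on the right. The integrability assumption places both sides in $L^p(\R^N)$ with $p\in(1,\tfrac{N}{2s})$, and so by \cite[Corollary 1.4]{Fall} (as cited already in the proof of \Cref{prop:K}) the equation $\Ds u=g$ is uniquely solvable in this class, the inverse being the Riesz potential $\Ints$. Applying this inversion to $u(x)=v(|x|)/|x|^{N-2s}$ yields $v(r)=r^{N-2s}\Ints(f(|y|)/|y|^N)(x)$, which combined with the formula from the first step completes the proof.

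\textbf{Main obstacle.} The computational step is straightforward bookkeeping that mirrors \Cref{lem:emden_change}, the only difference being the absence of a principal value due to the integrability of the kernel $|x-y|^{-(N-2s)}$ at the diagonal. The genuinely non-trivial point is the uniqueness assertion, whose natural proof route (showing that the distributional kernel of $\Ds$ in the relevant weighted $L^p$ space is trivial) is entirely shouldered by the invoked result of Fall; an independent proof via Liouville-type arguments would require decay properties of $v/r^{N-2s}$ at infinity which are not explicitly postulated here.
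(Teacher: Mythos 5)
Your proposal is correct and follows essentially the same route as the paper: the explicit formula is obtained by writing the Riesz potential in polar coordinates, reducing the spherical integral to the co-latitude angle, and substituting $\bar{r}=r\rho$ to identify $K_{N,-s}$, while uniqueness is delegated to \cite[Corollary 1.4]{Fall} exactly as in the paper's proof. No gaps beyond what the paper itself leaves implicit.
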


\begin{proof}
Since $|\cdot|^{-N}f\in L^p$ with $p>1$, the uniqueness is ensured by \cite[Corollary 1.4]{Fall}. Then we compute
\begin{align*}
r^{N-2s}\Ints\left(\dfrac{f(r)}{r^N}\right)
&=C_{N,-s}
	r^{N-2s}
	\int_{0}^{\infty}
	\int_{0}^{\pi}
		\dfrac{
            \tilde{\rho}^{-N}
            f(\tilde{\rho})
		}{
			(r^2+\tilde{\rho}^2-2r\tilde{\rho}\cos\theta
			)^{\frac{N-2s}{2}}
		}
	\cH^{N-2}(\bS^{N-2})\sin^{N-2}\theta\,d\theta
	\,\tilde{\rho}^{N-1}\,d\tilde{\rho}\\
&=C_{N,-s}
	\int_{0}^{\infty}
	\int_{0}^{\pi}
		\dfrac{
            f(r\rho)
		}{
			\left(
				1+\rho^2
				-2\rho\cos\theta
			\right)^{\frac{N-2s}{2}}
		}
	\cH^{N-2}(\bS^{N-2})\sin^{N-2}\theta\,d\theta
	\,\dfrac{d\rho}{\rho}.
\qedhere
\end{align*}
\end{proof}

\section{Proof of \Cref{prop:loghomo} and \Cref{prop:loghomo-infty}}
\label{sec:proof-loghomo}

\begin{proof}[Proof of \Cref{prop:loghomo}]
Thanks to \Cref{prop:K} we have
\[\begin{split}
	I:=r^N\Ds(\phi_{N-2s,\nu}(r;\eps))
	&=\PV\int_0^\infty
	K_{N,s}(\rho)\left(
	\chi( r)
	\left(\log\frac{1}{\eps r}\right)^{-\nu}
	-\chi(  r\rho)
	\left(\log\frac1{\eps r\rho}\right)^{-\nu}
	\right)
	\,d\rho,
\end{split}\]
\[\begin{split}
	I^{1}:=r^N\Ds(\phi_{N-2s,\nu}^1(r;\eps))
	&=\PV\int_0^\infty
	K_{N,s}(\rho)\left(
	\chi(  r)
	\frac{\log\log\frac{1}{\eps r}}{\left(\log\frac{1}{\eps r}\right)^{\nu}}
	-\chi(  r\rho)
	\frac{\log\log\frac{1}{\eps \rho r}}{\left(\log\frac{1}{\eps \rho r}\right)^{\nu}}
	\right)
	\,d\rho.
\end{split}\]
For both integrals, the main contribution is logarithmic and comes from the interval $[\frac{1}{\sqrt{\eps r}},\sqrt{\eps r}]$, since the tails are polynomially small in view of \Cref{prop:K}. An extra splitting is necessary because of the cut-off function. We proceed to the computation of $I$.

\medskip

\textbf{Case 1: $r\in(0, \frac{1}{ 8}]$.}
Here $\chi(  r)=1$ and $\frac{1}{4 r}\geq 2$. Moreover, since $\chi(  r\rho)=1$ for $\rho \leq \frac{1}{4r}$ and $\chi( r\rho)=0$ for $\rho\geq \frac{1}{ 2r}$, we have
$I=I_1+I_2+I_3+I_4$ where
\[\begin{split}
	|I_1|
	&=\abs{\int_0^{\sqrt{\eps r}}
		K_{N,s}(\rho)\left(
		\left(\log\frac{1}{\eps r}\right)^{-\nu}
		-\left(\log\frac1{\eps r\rho}\right)^{-\nu}
		\right)
		\,d\rho
	}
	\lesssim
	(\eps r)^s
	\left(\log\frac{1}{\eps r}\right)^{-\nu}\\
	I_2
	&=\PV\int_{\sqrt{\eps r}}^{\frac{1}{4r}}
	K_{N,s}(\rho)\left(
	\left(\log\frac{1}{\eps r}\right)^{-\nu}
	-\left(\log\frac{1}{\eps r}+\log\frac1\rho\right)^{-\nu}
	\right)
	\,d\rho\\
	|I_3|
	&=\abs{\int_{\frac{1}{4 r}}^{\frac{1}{2 r}}
		K_{N,s}(\rho)\left(
		\left(\log\frac{1}{\eps r}\right)^{-\nu}
		-\chi(r\rho)
		\left(\log\frac1{\eps r\rho}\right)^{-\nu}
		\right)
		\,d\rho
	}
	\lesssim
	r^N\left(\log\frac{1}{\eps r}\right)^{-\nu}\\
	|I_4|
	&=
	\left(\log\frac{1}{\eps r}\right)^{-\nu}
	\int_{\frac{1}{2 r}}^\infty
	K_{N,s}(\rho)
	\,d\rho
	\lesssim
	r^N\left(\log\frac{1}{\eps r}\right)^{-\nu}.
\end{split}\]
Then, we only need to compute $I_2$. There are two possibilities:
\begin{itemize}
	\item If $r\geq \frac{\eps}{16}$ we have $I_2=\PV\int_{\sqrt{\eps r}}^{\frac{1}{\sqrt{\eps r}}}-\int_{\frac{1}{4 r}}^{\frac{1}{\sqrt{\eps r}}}$, where the second term is bounded as $I_3$ above.
	\item If $r< \frac{\eps}{16}$ we have $I_2=\PV\int_{\sqrt{\eps r}}^{\frac{1}{\sqrt{\eps r}}}+\int_{\frac{1}{\sqrt{\eps r}}}^{\frac{1}{4 r}}$,
	and the second term is bounded by:
	$$
	\abs{
		\int_{\frac{1}{\sqrt{\eps r}}}^{\frac{1}{4 r}}\rho^{-(N+1)}\left(
		\left(\log\frac{1}{\eps r}\right)^{-\nu}
		-\left(\log\frac{1}{\eps r}+\log\frac1\rho\right)^{-\nu}
		\right)
		\,d\rho
	}
	\leq C(\eps r)^{N/2}  \left(\log\frac{1}{\eps r}\right)^{-\nu}.$$
\end{itemize}
Thus, in both cases, it remains to compute the first term.
Since $\sqrt{ \eps r}<\rho<1/\sqrt{\eps r}$ implies $-\frac12 \log\frac{1}{\eps r}<\log\frac1\rho<\frac12\log\frac{1}{\eps r}$, one may use the binomial expansion. Up to a power error (of $\eps r$) that follows from the algebraic decay of $K_{N,s}$ (\Cref{prop:K}), we have
\[\begin{split}
	I_2
	&=\left(\log\frac{1}{\eps r}\right)^{-\nu}
	\int_{\sqrt{\eps r}}^{\frac{1}{\sqrt{\eps r}}}
	K_{N,s}(\rho)\left(
	1-\left(1+
	\dfrac{
		\log\frac1\rho
	}{
		\log\frac{1}{\eps r}
	}
	\right)^{-\nu}
	\right)
	\,d\rho\\
	&=\left(\log\frac{1}{\eps r}\right)^{-\nu}
	\int_{\sqrt{\eps r}}^{\frac{1}{\sqrt{\eps r}}}
	K_{N,s}(\rho)\left(
	\nu
	\dfrac{
		\log\frac1\rho
	}{
		\log\frac{1}{\eps r}
	}- \frac{\nu(\nu+1)}{2}\dfrac{
		(\log\frac1\rho)^2
	}{
		(\log\frac{1}{\eps r})^2
	}
	+O\left(
	\dfrac{
		(\log\frac1\rho)^3
	}{
		(\log\frac{1}{\eps r})^3
	}
	\right)
	\right)
	\,d\rho\\
	&=\nu\kappa_1(s) \left(\log\frac{1}{\eps r}\right)^{-\nu-1}
	-\frac{\nu(\nu+1)}{2}\kappa_2(s)\left(\log\frac{1}{\eps r}\right)^{-\nu-2}+O\left(
	\left(\log\frac{1}{\eps r}\right)^{-\nu-3}
	\right).
\end{split}\]

\textbf{Case 2: $r\in(\frac{1}{ 8},1]$.}
Here we have that %
$w(r)=\chi(r)(\log\frac{1}{\eps r})^{-\nu}$
is smooth, with
\[\begin{split}
	&\quad\;
	w(r)+|rw'(r)|+
	r^2\sup_{[\frac{r}{2},2r]}|w''|
	+\int_{0}^{\frac14}
	w(r\rho)
	\,d\rho
	+\int_{1}^{\infty}
	\dfrac{w(r\rho)}{\rho^{N+1}}
	\,d\rho
	\leq C\left(\log\frac{1}{\eps}\right)^{-\nu}.
\end{split}\]
Then, by \eqref{eq:PV-upper} for $s\in[\frac12,1)$ or a similar consideration for $s\in(0,\frac12)$, %
there exists $C>0$ such that
\[
|I|\leq C\left(\log\frac{1}{\eps}\right)^{-\nu}.
\]

\textbf{Case 3: $r\in(1,\infty)$.} Here $\chi( r)=0$, and $\chi( r\rho)>0$ only when $\rho<\frac{1}{2 r}\leq\frac12$. We have
\[\begin{split}
	I&=-\int_0^{\frac{1}{2r}}
	K_{N,s}(\rho)
	\chi( r\rho)
	\left(\log\frac1{\eps r\rho}\right)^{-\nu}
	\,d\rho\\
\end{split}\]
Then we can estimate, using the change of variable $\tilde{\rho}= r\rho$,
\[\begin{split}
	|I|
	&\leq
	C\int_0^{\frac{1}{2 r}}
	\rho^{2s-1}
	\chi(r\rho)
	\left(\log\frac1{\eps r\rho}\right)^{-\nu}
	\,d\rho\leq
	\dfrac{C}{r^{2s}}
	\int_0^{\frac{1}{2}}
	\tilde{\rho}^{2s-1}
	\left(\log\frac{1}{\eps\tilde{\rho}}\right)^{-\nu}
	\,d\tilde{\rho}
	\leq \frac{C}{r^{2s}}
	\left(
	\log\frac{1}{\eps}
	\right)^{-\nu}.
\end{split}\]
This completes the computations for $I$.

The integral $I^1$ is treated in a similar way. Again, the dominant contribution comes from the interval where the cut-off function equals to $1$ (or can be replaced by $1$) and where the binomial expansion \eqref{eq:binom-loglog} is valid.
Therefore, for $r\in (0,\frac18]$, up to a power error (of $\eps r$) we have
\[\begin{split}
	I^{1}
	&=\PV\int_{\sqrt{\eps r}}^{\frac{1}{\sqrt{\eps r}}}K_{N,s}(\rho)
	\Bigg[
	\nu
	\dfrac{\left(\log\log\frac{1}{\eps r}\right)}{(\log\frac{1}{\eps r})^{\nu}}
	\frac{\log\frac{1}{\rho}}{\log{\frac{1}{\eps r}}}
	-
	\dfrac{1}{(\log\frac{1}{\eps r})^{\nu}}
	\frac{\log\frac{1}{\rho}}{\log{\frac{1}{\eps r}}}
	+O\left(
	\frac{
		\left(
		\log\log\frac{1}{\eps r}
		+1
		\right)
		(\log\frac{1}{\rho})^2
	}{
		(\log\frac{1}{\eps r})^2
	}
	\right)
	\Bigg]
	\,d\rho\\
	&= \kappa_1(s)\dfrac{ \nu\log\log\frac{1}{\eps r}-1}{(\log\frac{1}{\eps r})^{\nu+1}} +O\left(\dfrac{\log\log\frac{1}{\eps r}}{(\log\frac{1}{\eps r})^{\nu+2}}\right).
\end{split}\]
The remaining estimates follow in the same way. 
\end{proof}

\begin{proof}[Proof of \Cref{prop:loghomo-infty}]
If $r\leq 4$, we bound $|r^N\Ds\tilde{\phi}_{N-2s}^{\nu}|$ by a constant, %
 using \eqref{eq:K-asymp-1} and \Cref{rmk:PV}. If $r>4$, by a similar splitting to the one in the proof above, the main contribution comes from $[\frac{1}{\sqrt{r}},\sqrt{r}]$:
\[\begin{split}
r^N\Ds\tilde{\phi}_{N-2s}^{\nu}&=
	(\log r)^{\nu}
	\int_{\frac{1}{\sqrt{r}}}^{\sqrt{r}}
	K_{N,s}(\rho)
	\left(
		1-\left(
			1+\frac{\log\rho}{\log r}
		\right)^{\nu}
		\right)
	\,d\rho
	+O(r^{-s}(\log r)^\nu)\\
	&=(\log r)^{\nu}
	\int_{\frac{1}{\sqrt{r}}}^{\sqrt{r}}
	K_{N,s}(\rho)
	\left(
	-\nu\frac{\log\rho}{\log r}
	+O\left(
	\frac{
		(\log\rho)^2
	}{
		(\log r)^2
	}
	\right)
	\right)
	\,d\rho
	+O(r^{-s}(\log r)^\nu)\\
	&=\nu\kappa_1 (\log r)^{\nu-1}
	+O\left(
	(\log r)^{\nu-2}
	\right).
\qedhere
\end{split}\]
\end{proof}

\section{Maximum principle}

Let $P=\Ds-V(x)$ be a fractional Schr\"{o}dinger operator defined on a bounded domain $\Omega\subset \R^N$. %
We assume that
\[
0\leq V(x) \leq \dfrac{\eta}{|x|^{2s}},
    \quad %
\eta<\Lambda_{N,s}=2^{2s}\dfrac{
    \Gamma\bigl(\frac{n+2s}{4}\bigr)^2
}{
    \Gamma\bigl(\frac{n-2s}{4}\bigr)^2
}.
\]
Recall that $\Lambda_{N,s}$ is the optimal constant for the fractional Hardy inequality. In particular, $P$ is a positive operator in the sense that it has a positive first Dirichlet eigenvalue $\lambda_1=\lambda_1(P)>0$,
\begin{equation*}%
	\int v Pv  \,dx \geq \lambda_1 	\int v^2 \,dx \geq 0,
	\qquad \forall v \in H^s(\Omega),\, v\equiv 0 \text{ in } \R^N\setminus \Omega.
\end{equation*}

\begin{lem}[Maximum principle for variational solutions]
\label{HsMP}%
Let $P$ be as above. Suppose $u\in H^s(\Omega)$ solve
\begin{equation*}%
\begin{cases}
	Pu\geq 0& \text{ in } \Omega,\\
	u\geq 0 & \text{ in } \R^N \setminus \Omega,
\end{cases}\end{equation*}
then $u\geq 0$ in $\Omega$.
\end{lem}

\begin{proof}
We split $u=u_+-u_-$. Observe that $u_-=0$ in $\R^N\setminus\Omega$. By testing the equation against $u_-$,  %
\[
0\leq \int_{\Omega} u_- Pu  \,dx= \int_{\Omega}  u_- (Pu_+-Pu_-)  \,dx\leq -\lambda_1	\int_{\Omega}  u_-^2 \,dx \leq 0,
\]
since $\lambda_1>0$. Thus $u_-\equiv 0$ a.e., so $u\geq 0$ in $\Omega$.
\end{proof}

\section{Some known results on special functions}
\label{Ap:hyper}

\begin{lem}[\!\!\cite{Abramowitz,SlavyanovWolfganglay}]

Let $z\in\mathbb C$. The Gaussian hypergeometric function is defined by the power series
\begin{equation}
\label{hypergeo}
\Hyperg(a,b;c;z) = \sum_{n=0}^\infty \frac{(a)_n (b)_n}{(c)_n} \frac{z^n}{n!}=\frac{\Gamma(c)}{\Gamma(a)\Gamma(b)}\sum_{n=0}^\infty \frac{\Gamma(a+n)\Gamma(b+n)}{\Gamma(c+n)} \frac{z^n}{n!},
	\quad
\text{ for } |z|<1.
\end{equation}
It is undefined (or infinite) if $c$ equals a non-positive integer. It satisfies the following properties.
\begin{itemize}
  \item[i.] When evaluated at $z=0$, we have
  \begin{equation*}%
  \Hyperg(a+j,b-j;c;0)=1; \  j=\pm1,\pm2,...
  \end{equation*}
     \item[ii.] If $|\arg(1-z)|<\pi$, then
\begin{equation*}%
  \begin{split}
  \Hyperg(a,b;c;z)
  &=
                     \frac{\Gamma(c)\Gamma(c-a-b)}{\Gamma(c-a)\Gamma(c-b)}
                     \Hyperg\left(a,b;a+b-c+1;1-z\right)
                      \\
                     &\quad
                     +(1-z)^{c-a-b}\frac{\Gamma(c)\Gamma(a+b-c)}
                     {\Gamma(a)\Gamma(b)}\Hyperg(c-a,c-b;c-a-b+1;1-z).
     \end{split} \end{equation*}
\item[iii.] If $|\arg(-z)|<\pi$, then
\begin{equation*}%
\begin{split}
\Hyperg(a,b;c;z)
&=
\frac{
	\Gamma(c)\Gamma(b-a)
}{
	\Gamma(b)\Gamma(c-a)
}
(-z)^{-a}
\Hyperg(a,a-c+1;a-b+1;z^{-1})\\
&\quad
+\frac{
	\Gamma(c)\Gamma(a-b)
}{
	\Gamma(a)\Gamma(c-b)
}
(-z)^{-b}
\Hyperg(b,b-c+1;b-a+1;z^{-1}).
\end{split}
\end{equation*}
\item[iv.] If $|\arg(-z)|<\pi$, $|z|>1$, $c-a\notin \mathbb{Z}$, then
\begin{equation*}
\begin{split}
&\Hyperg(a,a;c;z)\\
&\quad=\frac{
	\Gamma(c)
}{
	\Gamma(a)\Gamma(c-a)
}
(-z)^{-a}
\sum_{m=0}^{\infty}
z^{-m}
[\log(-z)
+2\psi(m+1)
-\psi(a+m)
-\psi(c-a-m)].
\end{split}\end{equation*}
\item[v.] It is symmetric with respect to first and second arguments, i.e.,
\begin{equation*}%
  \Hyperg(a,b;c;z)= \Hyperg(b,a;c;z).
  \end{equation*}
\item[vi.] Let $m\in \N$. Its $m$-derivative is given by
\begin{equation*}%
 \tfrac{d^m}{dz^m} \left[(1-z)^{a+m-1} \Hyperg(a,b;c;z)\right]= \tfrac{(-1)^m(a)_m(c-b)_m}{(c)_m} (1-z)^{a-1}\Hyperg(a+m,b;c+m;z).
  \end{equation*}

 \item[vii.] If $c > b > 0$, then by using its meromorphic extension, we have for $|z|<1$
\begin{equation}\label{prop7}
\Hyperg(a,b;c;z)
=\dfrac{\Gamma(c)}{\Gamma(b)\Gamma(c-b)}
\int_0^1 t^{b-1}(1-t)^{c-b-1}(1-tz)^{-a} \,dt.
\end{equation}
\end{itemize}
\end{lem}

\begin{lem}
For $x,y\in \mathbb{C}$ with ${\rm Re\,} x, {\rm Re\,} y>0$, the Beta function satisfies
\[
B(x,y)
=\displaystyle\int_0^1
	\sigma^{x-1}(1-\sigma)^{y-1}
\,d\sigma
=\frac{
    \Gamma(x)\Gamma(y)
}{
    \Gamma(x+y)
}
=2\displaystyle\int_0^1
    \sigma^{2x-1}(1-\sigma^2)^{y-1}
\,d\sigma
=2\int_0^\infty
    \dfrac{
        \tau^{2x-1}
    }{
        (1+\tau^2)^{x+y}
    }
\,d\tau.
\]
\end{lem}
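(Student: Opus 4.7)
\medskip

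\noindent\textbf{Proof proposal for \Cref{lem:Beta}.} The plan is to obtain both representations from the standard Euler integral $B(x,y)=\int_0^1 \sigma^{x-1}(1-\sigma)^{y-1}\,d\sigma$ by a sequence of two elementary changes of variable; no new analytic input is required beyond absolute convergence for $\operatorname{Re} x,\operatorname{Re} y>0$, which permits Fubini/substitution throughout.

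First, to derive the identity
\[
B(x,y)=2\int_0^1 \sigma^{2x-1}(1-\sigma^2)^{y-1}\,d\sigma,
\]
I would substitute $u=\sigma^2$ in the right-hand side (so $du=2\sigma\,d\sigma$, equivalently $d\sigma=\tfrac{du}{2\sqrt{u}}$). Then $\sigma^{2x-1}\,d\sigma = \tfrac{1}{2}u^{x-1}\,du$, and $(1-\sigma^2)^{y-1}=(1-u)^{y-1}$, so the integral collapses to $\int_0^1 u^{x-1}(1-u)^{y-1}\,du=B(x,y)$.

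Second, starting from this trigonometric-style form, the substitution $\sigma=\tau/\sqrt{1+\tau^2}$ (with $\tau\in(0,\infty)$) gives $1-\sigma^2=1/(1+\tau^2)$ and $d\sigma=(1+\tau^2)^{-3/2}\,d\tau$. Plugging in,
\[
\sigma^{2x-1}(1-\sigma^2)^{y-1}\,d\sigma
=\frac{\tau^{2x-1}}{(1+\tau^2)^{(2x-1)/2}}\cdot\frac{1}{(1+\tau^2)^{y-1}}\cdot\frac{d\tau}{(1+\tau^2)^{3/2}}
=\frac{\tau^{2x-1}}{(1+\tau^2)^{x+y}}\,d\tau,
\]
and the endpoints $\sigma=0,1$ correspond to $\tau=0,\infty$, yielding the second claimed representation.

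There is no substantive obstacle here: both substitutions are monotone diffeomorphisms, and the integrands are absolutely integrable under the hypothesis $\operatorname{Re} x,\operatorname{Re} y>0$ (the only borderline issue would be integrability at $\tau=\infty$ in the last integral, which requires $\operatorname{Re}(2x-1)-2\operatorname{Re}(x+y)<-1$, i.e.\ $\operatorname{Re} y>0$, and at $\tau=0$, which requires $\operatorname{Re} x>0$—both assumed). The meromorphic continuation beyond this half-space is not needed for the lemma as stated, so one can simply present the computation directly.
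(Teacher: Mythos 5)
Your proof is correct: both substitutions are computed accurately (in particular the Jacobian $d\sigma=(1+\tau^2)^{-3/2}d\tau$ and the exponent bookkeeping giving $(1+\tau^2)^{-(x+y)}$), and the convergence remarks under ${\rm Re}\,x,{\rm Re}\,y>0$ are exactly what is needed. The paper states this lemma without proof as a classical fact, and your two-step change of variables is precisely the standard derivation it implicitly relies on, so there is nothing to add.
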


\section*{Acknowledgements}

HC has received funding from the European Research Council under Grant Agreement No 721675, from the Spanish Government under Grant CEX2019-000904-S funded by MCIN/AEI/10.13039/501100011033
and PID2020-113596GB-I00, and from the Swiss National Science Foundation under Grant PZ00P2\_202012/1. A. DelaTorre  has been supported by the following grants: Juan de la Cierva incorporaci\'on 2018 with ref. IJC2018-036320-I, PGC2018-096422-B-100, MTM2014-52402-C3-1-P and MTM2017-85757-P by Spanish government; by the FEDER-MINECO Grants PID2021- 122122NB-I00 and PID2020-113596GB-I00; RED2022-134784-T, funded by MCIN/AEI/10.13039/ 501100011033 and by J. Andalucia (FQM-116). This work is also partially supported by the IMAG–Maria de Maeztu grant CEX2020-001105-M / AEI / 10.13039/501100011033, Fondi Ateneo - Sapienza, PRIN (Prot. 20227HX33Z) and INdAM-GNAMPA Project 2023, codice CUP E53C2200193000 and INdAM -GNAMPA Project 2024, codice CUP E53C23001670001 and INdAM -GNAMPA Professore Visitatore.

We are indebted to Alessio Figalli, Mar\'{i}a del Mar Gonz\'{a}lez, Enno Lenzmann, Yannick Sire and Juncheng Wei for insightful comments.

\end{document}